\setlist{noitemsep}
\setlist[1]{labelindent=\parindent, topsep=0.5\topsep}
\setlist[enumerate,1]{label=\textnormal{(\hspace{-0.2ex}\textit{\roman*}\hspace{-0.1ex})}}
\newcommand{\mylabel}[2]{#2\def\@currentlabel{#2}\label{#1}}
\newtheorem{theorem}{Theorem}[section]
\newtheorem{lemma}[theorem]{Lemma}
\newtheorem{proposition}[theorem]{Proposition}
\newtheorem{corollary}[theorem]{Corollary}
\theoremstyle{definition}
\newtheorem{definition}[theorem]{Definition}
\newtheorem{example}[theorem]{Example}
\newtheorem{notation}[theorem]{Notation}
\newtheorem{remark}[theorem]{Remark}
\newtheorem{construction}{Construction}
\newcommand{\M}{\mathbb{M}}
\newcommand{\m}{\mathfrak{m}}
\renewcommand{\P}{\mathbb{P}}
\newcommand{\id}{\mathds{1}}
\newcommand{\PSL}{\mathsf{PSL}}
\newcommand{\Mat}{\mathsf{Mat}}
\renewcommand{\leq}{\leqslant}
\renewcommand{\subset}{\subseteq}
\renewcommand{\phi}{\varphi}
\renewcommand{\nsim}{\not\sim}
\newcommand*{\lsim}{\mathord{\sim}}
\DeclareMathOperator{\characteristic}{char}
\DeclareMathOperator{\im}{Im}
\DeclareMathOperator{\Ker}{Ker}
\DeclareMathOperator{\Sym}{Sym}
\DeclarePairedDelimiter{\abs}{\lvert}{\rvert}
\newcommand{\til}{\mathord{\sim}}
\numberwithin{equation}{section}
\title{Local Moufang sets and \texorpdfstring{$\PSL_2$}{PSL\textunderscore2} over a local ring}
\author{Tom De Medts (\href{mailto:Tom.DeMedts@UGent.be}{Tom.DeMedts@UGent.be}) \and Erik Rijcken\footnote{PhD Fellow of the Research Foundation - Flanders (Belgium) (F.W.O.-Vlaanderen)}\ \  (\href{mailto:erijcken@cage.ugent.be}{erijcken@cage.ugent.be})}
\begin{document}

\maketitle

\begin{abstract}
	We introduce local Moufang sets as a generalization of Moufang sets.
    We present a method to construct local Moufang sets from only one root group and one permutation.
    We use this to describe $\PSL_2$ over a local ring as a local Moufang set, and give necessary and sufficient conditions for a local Moufang set to be of this form.
\end{abstract}

\section{Introduction}

A \emph{Moufang set} is a permutation group $G$ acting on a set $X$, along with a conjugacy class of subgroups $\{U_x\mid x\in X\}$, such that each $U_x$ fixes $x$ and acts regularly on $X\setminus\{x\}$, and such that $U_x^g = U_{xg}$ for all $g\in G$. These structures have been introduced by Jacques Tits in \cite{MR1200265}. The easiest example of a Moufang set is $\PSL_2(k)$ acting on the projective line, where $k$ is a field.
In \cite{MR2221120}, T.~De Medts and R.~Weiss gave some natural necessary and sufficient conditions for a Moufang set to be isomorphic to the Moufang set of $\PSL_2(k)$
for some field $k$ with $\characteristic(k) \neq 2$;
this result has been extended to fields of any characteristic by M.~Gr\"uninger \cite{MR2588140}.

In this paper, we introduce a more general structure that encompasses permutation groups such as $\PSL_2(R)$ for a local ring $R$.
(All local rings appearing in this paper are assumed to be commutative rings with $1$.)
We will call such a structure a \emph{local Moufang set}. We have a set with an equivalence relation $(X,\lsim)$, and for each $x\in X$ a group $U_x$ acting faithfully on this set and preserving equivalence. These groups are called \emph{root groups}, and the group generated by them is the \emph{little projective group}, denoted by $G$. We want this action to be sufficiently nice, similar to the situation of Moufang sets, and hence impose a few natural axioms.

Section~\ref{se:2} consists of some main definitions and basic properties that will be used throughout the paper, including the generalization of two important notions of Moufang sets: the $\mu$-maps and Hua maps. These are specific elements of the little projective group that respectively swap and fix two chosen non-equivalent elements of $X$.
Section~\ref{se:3} then aims to show that the two-point stabilizer of those two chosen elements is in fact generated by the Hua maps.

To describe examples of local Moufang sets, it is convenient to have a simpler construction. It turns out to be sufficient to have one root group and a permutation $\tau$ which swaps the fixed point of the root group with a non-equivalent point, in order to reconstruct all the data for a local Moufang set; some additional conditions are needed to show all axioms (Theorem~\ref{thm:constrMouf}). Using this construction, it is then easy to describe the local Moufang set structure of $\PSL_2(R)$.

In the last section, we will define the notion of a \emph{special} local Moufang set. From special local Moufang sets with abelian root groups, satisfying some extra conditions, we can construct a local ring $R$, and hence we expect the local Moufang set to be closely related to $\PSL_2(R)$ (Theorem~\ref{thm:Rlocal}). Using this construction, we find necessary and sufficient conditions for a local Moufang set to be $\PSL_2(R)$ for a local ring with residue field of characteristic not $2$ (Theorem~\ref{thm:PSL2equiv}).

\subsubsection*{Acknowledgment}

The idea of axiomatizing groups such as $\PSL_2(R)$ as ``non-primitive variant of Moufang sets'' arose from discussions
with Pierre-Emmanuel Caprace about Moufang sets appearing as the set of ends of a (locally finite) tree.
We are grateful to him for sharing his insights with us.
We also thank the referee for a careful reading of the paper, which resulted in valuable suggestions and minor corrections.

\section{Definition and basic properties of local Moufang sets}\label{se:2}

\subsection{Local Moufang sets}

We first recall the notion of a Moufang set, and we refer the reader to~\cite{MR2658895} for a general introduction to the theory
(which we will not explicitly need in the current paper).

A Moufang set is defined as a set $X$ together with a family of groups $\{U_x\}_{x\in X}$ acting faithfully on the set, satisfying the following two properties:
\begin{enumerate}[label=\textnormal{(M\arabic*)},leftmargin=10ex]
	\item For $x\in X$, $U_x$ fixes $x$ and acts sharply transitively on $X\setminus\{x\}$\label{itm:M1}.
	\item For $x\in X$ and $g\in \langle U_y\mid y\in X\rangle$, we have $U_x^g = U_{xg}$\label{itm:M2}.
\end{enumerate}

We will generalize this to a family of groups acting on a set with an equivalence relation, which will be the main objects we introduce and study.

\begin{notation}\leavevmode
	\begin{itemize}
		\item If $(X,\lsim)$ is a set with an equivalence relation, we denote the equivalence class of $x\in X$ by~$\overline{x}$, and the set of equivalence classes by $\overline{X}$.
		\item We denote the group of equivalence-preserving permutations of $X$ by $\Sym(X,\lsim)$.
		\item If $g \in \Sym(X,\lsim)$, we will denote the corresponding element of $\Sym(\overline{X})$ by $\overline{g}$.
		\item Our actions will always be on the right. The action of an element $g$ on an element $x$ will be denoted by $x\cdot g$ or $xg$. Conjugation will correspondingly be $g^h = h^{-1}gh$.
	\end{itemize}
\end{notation}

We are now ready to define our main objects, the local Moufang sets.

\begin{definition}
	A \emph{local Moufang set} $\M$ consists of a set with an equivalence relation $(X,\lsim)$ such that $\abs{\overline{X}}>2$, and a family of subgroups $U_x\leq\Sym(X,\lsim)$ for all $x\in X$, called the \emph{root groups}. We denote $U_{\overline{x}}:=\overline{U_x}=\im(U_x\to \Sym(\overline{X}))$ for the permutation group induced by the action of $U_x$ on the set of equivalence classes.
    (This notation is justified by (LM1) below.)
    The group generated by the root groups is called the \emph{little projective group}, and will usually be denoted by $G:=\langle U_x\mid x\in X\rangle$. Furthermore, we demand the following:
	\begin{enumerate}[label=\textnormal{(LM\arabic*)},leftmargin=10ex]
		\item If $x\sim y$ for $x,y\in X$, then $U_{\overline{x}}=U_{\overline{y}}$\label{itm:LM1}.
		\item For $x\in X$, $U_x$ fixes $x$ and acts sharply transitively on $X\setminus\overline{x}$\label{itm:LM2}.
		\item[\mylabel{itm:LM2'}{\textnormal{(LM2')}}] For $\overline{x}\in \overline{X}$, $U_{\overline{x}}$ fixes $\overline{x}$ and acts sharply transitively on $\overline{X}\setminus\{\overline{x}\}$.
		\item For $x\in X$ and $g\in G$, we have $U_x^g = U_{xg}$\label{itm:LM3}.
	\end{enumerate}
\end{definition}

It is worth noting that from these axioms, we also get
\begin{enumerate}[label=\textnormal{(LM3')},leftmargin=10ex]
	\item For $\overline{x}\in \overline{X}$ and $g\in G$, we have $U_{\overline{x}}^g = U_{\overline{xg}}$\label{itm:LM3'};
\end{enumerate}
this follows from \ref{itm:LM3} and the fact that we are working with the induced action.
By \ref{itm:LM1} the group $U_{\overline{x}}$ only depends on $\overline{x}$, and \ref{itm:LM2'} and \ref{itm:LM3'} precisely state that $(\overline{X},\{U_{\overline{x}}\}_{\overline{x}\in \overline{X}})$ is a Moufang set.

\begin{definition}
	Two local Moufang sets $\M$ and $\M'$ are \emph{isomorphic}, denoted $\M\cong\M'$, if there is a bijection $\phi \colon X\to X'$ and group isomorphisms $\theta_x  \colon  U_x\to U'_{\phi(x)}$ such that
	\begin{itemize}
		\item for all $x,y\in X$, we have $x\sim y \iff \phi(x)\sim'\phi(y)$;
		\item for all $x,y\in X$ and $u\in U_y$, we have $\phi(x\cdot u) = \phi(x)\cdot \theta_y(u)$.
	\end{itemize}
\end{definition}

Before giving an example, we will first prove a basic property about local Moufang sets:

\begin{proposition}\label{pr:UxUy}
	Let $\M$ be a local Moufang set, and $x,y\in X$ with $x\nsim y$. Then $\langle U_x,U_y \rangle = G$.
\end{proposition}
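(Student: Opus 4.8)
The plan is to show $G = \langle U_x, U_y\rangle$ by proving that the subgroup $H := \langle U_x, U_y\rangle$ already contains every root group $U_z$ for $z \in X$, since $G$ is by definition generated by all the root groups. The key tool is axiom \ref{itm:LM3}, which says root groups are permuted by conjugation: $U_z^g = U_{zg}$ for every $g \in G$. If I can show that the $H$-orbit of (the equivalence class of) any point can be reached, and that the relevant conjugating elements lie in $H$, then each $U_z$ is a conjugate $U_x^h$ or $U_y^h$ with $h \in H$, hence lies in $H$.

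First I would pass to the induced Moufang set on $\overline{X}$, where the situation is cleaner: since $x \nsim y$, the classes $\overline{x}$ and $\overline{y}$ are distinct, and by \ref{itm:LM2'} the group $U_{\overline{x}}$ acts sharply transitively on $\overline{X} \setminus \{\overline{x}\}$. Given any class $\overline{z}$, I would locate an element of $H$ (built from $U_x$ or $U_y$) that maps $\overline{x}$ or $\overline{y}$ onto $\overline{z}$, so that by \ref{itm:LM3'} the conjugate of $U_{\overline{x}}$ or $U_{\overline{y}}$ by that element equals $U_{\overline{z}}$. Concretely, if $\overline{z} \neq \overline{x}$, sharp transitivity of $U_{\overline{x}}$ gives a unique $\overline{u} \in U_{\overline{x}}$ with $\overline{x} \cdot \overline{u}$... but the more robust route is: since $U_{\overline{y}}$ is sharply transitive on $\overline{X}\setminus\{\overline{y}\}$ and $\overline{x} \neq \overline{y}$, there is a $g \in U_y \leq H$ with $\overline{x} \cdot \overline{g} = \overline{z}$ whenever $\overline{z} \neq \overline{y}$, and symmetrically using $U_x$ to reach $\overline{y}$. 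Thus every class $\overline{z}$ is the image of $\overline{x}$ or $\overline{y}$ under some $\overline{g}$ with $g \in H$.

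Once I know that for each $z \in X$ there is $g \in H$ with $\overline{zg} = \overline{z'}$... more precisely that $U_{\overline{z}}$ is an $H$-conjugate of $U_{\overline{x}}$ or $U_{\overline{y}}$, I still need to lift this from $\overline{X}$ back to $X$ to conclude $U_z \in H$, not merely $U_{\overline{z}} \in \overline{H}$. Here I would use the full strength of \ref{itm:LM2} and \ref{itm:LM3}: having found $g \in H$ with $\overline{xg} = \overline{z}$, the root group $U_{xg} = U_x^g$ lies in $H$ and fixes the point $xg \sim z$. To get exactly $U_z$, I use that $U_{xg}$ acts sharply transitively on $X \setminus \overline{xg} = X \setminus \overline{z}$, so some $h \in U_{xg} \leq H$ sends $xg$ to $z$; then $U_z = U_{(xg)h} = U_{xg}^h \leq H$ by \ref{itm:LM3} again.

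The main obstacle I anticipate is precisely this last lifting step: transitivity on $\overline{X}$ only controls root groups up to their action on classes, and I must carefully combine \ref{itm:LM2} (sharp transitivity of $U_x$ on $X \setminus \overline{x}$, a genuinely finer statement than \ref{itm:LM2'}) with the conjugation axiom \ref{itm:LM3} to pin down the individual root group $U_z$ attached to the specific point $z$ rather than just its class. I would also need to handle the degenerate-looking cases where $\overline{z}$ equals $\overline{x}$ or $\overline{y}$ separately, but these are immediate: if $z \sim x$ then $U_{\overline{z}} = U_{\overline{x}}$ by \ref{itm:LM1} and the same two-step argument (using sharp transitivity within the class) yields $U_z \in \langle U_x \rangle \leq H$. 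Assembling these cases shows every generator $U_z$ of $G$ lies in $H$, giving $G = H = \langle U_x, U_y \rangle$.
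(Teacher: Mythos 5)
Your overall skeleton (show every root group $U_z$ lies in $\langle U_x,U_y\rangle$ and invoke \ref{itm:LM3}) is the right one, but the lifting step you flagged as the main obstacle is genuinely broken, in two places. Having found $g \in H$ with $xg \sim z$, you propose to move $xg$ to $z$ by some $h \in U_{xg}$. This is impossible: by \ref{itm:LM2}, $U_{xg}$ \emph{fixes} $xg$, so no element of $U_{xg}$ sends $xg$ to $z$ unless $z = xg$; and the sharp transitivity of $U_{xg}$ takes place on $X \setminus \overline{xg}$, while both $xg$ and $z$ lie inside $\overline{xg}$, so it says nothing about this pair. The same error recurs in your final paragraph: there is no ``sharp transitivity within the class'' anywhere in the axioms --- nothing controls the action of a root group on the equivalence class of its fixed point beyond the fact that it preserves that class --- and the conclusion $U_z \leq \langle U_x\rangle = U_x$ for $z \sim x$ is false in general. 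In $\M(R)$ (Example~\ref{ex:PSL2R}) one checks that $U_{[1,m]} = U_{[1,0]}^{\alpha_{[1,m]}} \neq U_{[1,0]}$ for $0 \neq m \in \m$, even though $[1,m] \sim [1,0]$; this is precisely why \ref{itm:LM1} only asserts equality of the \emph{induced} groups $U_{\overline{x}}$.

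The repair is to conjugate by an element of a root group attached to a point \emph{not} equivalent to $z$, and this is exactly where the hypothesis $x \nsim y$ is needed; once you do this, your two-stage structure (transitivity on $\overline{X}$ first, then an adjustment inside the class) collapses into a single step, which is the paper's proof. Indeed, \ref{itm:LM2} gives sharp transitivity of $U_y$ on all of $X \setminus \overline{y}$ as a set of \emph{points}, not classes: if $z \sim x$, then $z \nsim y$, so there is $g \in U_y$ with $xg = z$, and $U_z = U_x^g \leq \langle U_x,U_y\rangle$ by \ref{itm:LM3}; if $z \nsim x$, there is $g \in U_x$ with $yg = z$, and $U_z = U_y^g$. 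Your detour through $\overline{X}$ and \ref{itm:LM2'} only ever produces information up to equivalence, and it cannot be lifted back to individual points without this observation, so the passage to classes should simply be dropped.
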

\begin{proof}
	It is sufficient to show that $U_z\subset\langle U_x,U_y\rangle$ for any $z\in X$. Assume first that $z\sim x$. Then $z\nsim y$, so by \ref{itm:LM2} there is a $g\in U_y$ such that $xg=z$. By \ref{itm:LM3}, $U_z = U_x^g \subset \langle U_x,U_y\rangle$. Similarly, if $z\nsim x$, then there is a $g\in U_x$ such that $yg = z$, so $U_z = U_y^g \subset \langle U_x,U_y\rangle$.
\end{proof}

This means that it is sufficient to give the set with its equivalence relation and two such root groups to get all the data of a local Moufang set.
We will use this to give an example of a local Moufang set.

\begin{example}\label{ex:PSL2R}
	The easiest example of a local Moufang set (with a non-trivial equivalence relation)
    is that of the projective special linear group acting on a projective line over a local ring~$R$.

    So let $R$ be an arbitrary local ring, let $\m$ be its unique maximal ideal, and let $R^\times = R \setminus \m$ be the set of invertible elements of $R$.
	First, we have to define the set and the equivalence relation, i.e.\@~the projective line over $R$. A vector line over $R$ consists of all invertible multiples of a pair $(a,b)\in R^2$ such that $aR+bR=R$. We denote this vector line by
    \[ [a,b]:=\{(au,bu)\mid u\in R^\times\} . \]
    The projective line over $R$ is then
	\[\P^1(R) := \{[a,b]\mid aR+bR=R\}\,.\]
	Since $R$ is a local ring, the condition is equivalent to saying at least one of $a$ and $b$ is invertible. We can hence simplify our projective line to
	\[\P^1(R) = \{[1,r]\mid r\in R\}\cup\{[m,1]\mid m\in\m\}.\]
	Now the equivalence relation on $\P^1(R)$ can easily be defined by
    \begin{equation}\label{eq:equiv}
        \begin{aligned}
            & [1,r]\sim[1,s] \iff r-s\in\m\,, \\
            & [m,1]\sim[n,1] \quad \text{for all } m,n\in\m\,,
        \end{aligned}
    \end{equation}
    and no other equivalences hold.
    We get two non-equivalent points on the projective line which have a simple description, namely $[1,0]$ and $[0,1]$, for which we will define the root groups.

	The root groups live inside
	\[\PSL_2(R) := \bigl\{ A\in \Mat_2(R)\mid \det(A)=1\bigr\} / \bigl\{ \begin{psmallmatrix} r & 0 \\ 0 & r \end{psmallmatrix} \mid r\in R, r^2=1 \bigr\} . \]
	We will denote a matrix in this quotient by square brackets. We can now set
	\[ U_{[1,0]} = \left\{\begin{bmatrix}
			1 & 0 \\
			r & 1
		\end{bmatrix}\in\PSL_2(R)\,\middle|\, r\in R\right\}\qquad
		U_{[0,1]} = \left\{\begin{bmatrix}
			1 & r \\
			0 & 1
		\end{bmatrix}\in\PSL_2(R)\,\middle|\, r\in R\right\},\]
	which generates the entire little projective group, and hence defines all other root groups. We denote this local Moufang set by $\M(R)$. Of course, we have not yet proven that this is a local Moufang set. At this point, one could do the verifications, but we will later show sufficient conditions which are easier to verify. What is important to note at this point, is that $U_{[0,1]}$ indeed fixes $[0,1]$, preserves equivalence, and acts sharply transitively on $\{[1,r]\mid r\in R\}$.

We also observe that there is a natural bijection from $\overline{\P^1(R)}$ to $\P^1(R/\m)$, the projective line over the \emph{residue field} $R/\m$ of $R$. Furthermore, the Moufang set we get by the induced action on $\overline{\P^1(R)}$ is the standard Moufang set associated with $\PSL_2(R/\m)$.
\end{example}

In the example, we chose two nice points to work with. We will also do this in general, but we would first like to know to what extent the choice of the points is relevant.
It turns out that it is not, as long as we take points that are not equivalent.

\begin{proposition}\label{prop:twotrans}
	Let $\M$ be a local Moufang set. The little projective group $G$ acts transitively on $\{(x,y)\in X^2\mid x\nsim y\}$.
\end{proposition}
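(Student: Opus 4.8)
The plan is to reduce the transitivity on non-equivalent pairs to two easier moves: first adjusting the first coordinate, then the second. Fix two pairs $(x,y)$ and $(x',y')$ in $X^2$ with $x\nsim y$ and $x'\nsim y'$; I want to produce $g\in G$ with $xg=x'$ and $yg=y'$. Throughout I will use that $G\leq\Sym(X,\lsim)$ (since each $U_x$ is, by definition, a group of equivalence-preserving permutations), so that $G$ preserves the relation $\lsim$ and in particular sends non-equivalent pairs to non-equivalent pairs. The argument will rest almost entirely on the sharp transitivity in \ref{itm:LM2}.

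First I would establish that $G$ acts transitively on $X$. Given any two points $x,x'\in X$, I claim there is a point $z$ with $z\nsim x$ and $z\nsim x'$. Indeed, if $\overline{x}\neq\overline{x'}$ then, since $\abs{\overline{X}}>2$, there is a third equivalence class distinct from both, and any of its representatives serves as $z$; if $\overline{x}=\overline{x'}$, it suffices to pick $z$ in any class different from $\overline{x}$, which again exists. Now both $x$ and $x'$ lie in $X\setminus\overline{z}$, and by \ref{itm:LM2} the root group $U_z$ acts sharply transitively on $X\setminus\overline{z}$, so some $u\in U_z$ satisfies $xu=x'$. Hence $G$ is transitive on $X$; this is the only step where $\abs{\overline{X}}>2$ is genuinely needed, as it is what guarantees the auxiliary point $z$.

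With point-transitivity in hand, choose $g_1\in G$ with $xg_1=x'$ and set $y_1:=yg_1$. Because $g_1$ preserves $\lsim$ and $x\nsim y$, we have $x'\nsim y_1$, so both $y_1$ and $y'$ lie in $X\setminus\overline{x'}$. Applying \ref{itm:LM2} to the root group $U_{x'}$, which fixes $x'$ and acts sharply transitively on $X\setminus\overline{x'}$, I obtain $u\in U_{x'}$ with $y_1u=y'$ and $x'u=x'$. Then $g:=g_1u\in G$ sends $(x,y)$ to $(x',y')$, which proves the claim. There is no serious obstacle here beyond correctly invoking $\abs{\overline{X}}>2$ to secure the third point in the first step; once the sharp transitivity of the root groups is available, both coordinates are handled directly.
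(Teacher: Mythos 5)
Your proof is correct and follows essentially the same two-step strategy as the paper: first move $x$ to $x'$ using sharp transitivity of a root group (with the auxiliary point supplied by $\abs{\overline{X}}>2$ via \ref{itm:LM2}), then fix the first coordinate and move the second using $U_{x'}$. The only cosmetic difference is that you isolate transitivity of $G$ on $X$ as a standalone step, whereas the paper case-splits on $x\sim x'$ versus $x\nsim x'$ (using $U_y$ directly in the first case); this changes nothing of substance.
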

\begin{proof}
	Let $(x,y)$ and $(x',y')$ be such pairs. We will first map $x$ to $x'$, for which we need two cases:
	\begin{itemize}[leftmargin=1.5cm, labelwidth=4ex, itemsep=0.5ex]
		\item[{$x\sim x'$}:] By \ref{itm:LM2}, there is a $g\in U_y$ mapping $x$ to $x'$, so we have $(x,y)\cdot g = (x',y)$.
		\item[{$x\nsim x'$}:] Since $\abs{\overline{X}}>2$, there is a $z\in X$ such that $x\nsim z$ and $x'\nsim z$. Again by \ref{itm:LM2}, there is a $g\in U_z$ s.t.\ $x\cdot g=x'$, and hence $(x,y)\cdot g = (x',y'')$ for some $y''\nsim x'$.
	\end{itemize}
	So, after renaming, we have reduced the question to finding an element mapping $(x,y)$ to $(x,y')$. Since $y\nsim x$ and $y'\nsim x$, there is a $g\in U_x$ mapping $y$ to $y'$, hence $(x,y)\cdot g = (x,y')$.
\end{proof}

\begin{notation}\leavevmode
	\begin{itemize}
		\item In a local Moufang set, we now fix two points of $X$ that are not equivalent, and we call them $0$ and $\infty$.
		\item For any $x\nsim\infty$, by \ref{itm:LM2}, there is a unique element of $U_\infty$ mapping $0$ to $x$. We denote this element by $\alpha_x$. In particular, $\alpha_0 = \id$.
		\item For $x\nsim\infty$, we set $-x:=0\cdot\alpha_x^{-1}$, so $\alpha_x^{-1} = \alpha_{-x}$.
	\end{itemize}
\end{notation}

For many properties, we will have to restrict to those $x\in X$ non-equivalent to both $0$ and~$\infty$.
\begin{definition}
	In a local Moufang set, an element $x\in X$ is a \emph{unit} if $x\nsim0$ and $x\nsim\infty$.
\end{definition}

There are a few other ways of characterizing the units, based on their corresponding elements of $U_\infty$.

\begin{proposition}\label{pr:unit}
	Let $\M$ be a local Moufang set, and $x\in X$ with $x\nsim\infty$. Then the following are equivalent:
	\begin{enumerate}
		\item $x$ is a unit\label{itm:unit1};
		\item $\overline{\alpha_x}$ does not fix $\overline{0}$\label{itm:unit2};
		\item $\overline{\alpha_x}$ does not fix any element of $\overline{X} \setminus \overline{\infty}$\label{itm:unit3}.
	\end{enumerate}
\end{proposition}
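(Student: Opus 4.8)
The plan is to establish \ref{itm:unit1}$\iff$\ref{itm:unit2} directly from the defining property of $\alpha_x$, and then to obtain \ref{itm:unit2}$\iff$\ref{itm:unit3} from the fact, noted after the axioms, that the induced structure $(\overline{X},\{U_{\overline{y}}\}_{\overline{y}\in\overline{X}})$ is an ordinary Moufang set. First I would record two elementary facts about $\overline{\alpha_x}$. Since $\alpha_x\in U_\infty$ it fixes $\infty$ by \ref{itm:LM2}, so $\overline{\alpha_x}$ fixes $\overline{\infty}$, and by construction $\overline{\alpha_x}\in\overline{U_\infty}=U_{\overline{\infty}}$. Moreover $0\cdot\alpha_x=x$ by definition of $\alpha_x$, so on the level of equivalence classes $\overline{0}\cdot\overline{\alpha_x}=\overline{x}$.

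For \ref{itm:unit1}$\iff$\ref{itm:unit2}: we are given $x\nsim\infty$, so $x$ is a unit precisely when $x\nsim0$, that is, when $\overline{x}\neq\overline{0}$. Since $\overline{0}\cdot\overline{\alpha_x}=\overline{x}$, the permutation $\overline{\alpha_x}$ fixes $\overline{0}$ if and only if $\overline{x}=\overline{0}$; negating this equivalence yields exactly \ref{itm:unit1}$\iff$\ref{itm:unit2}.

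For \ref{itm:unit2}$\iff$\ref{itm:unit3}: the implication \ref{itm:unit3}$\Rightarrow$\ref{itm:unit2} is immediate, as $\overline{0}\in\overline{X}\setminus\overline{\infty}$ because $0\nsim\infty$. For the converse I would invoke \ref{itm:LM2'}, which says that $U_{\overline{\infty}}$ acts sharply transitively on $\overline{X}\setminus\{\overline{\infty}\}$; in a sharply transitive action the stabilizer of any point is trivial, so any element of $U_{\overline{\infty}}$ that fixes even a single point of $\overline{X}\setminus\{\overline{\infty}\}$ must be the identity, and hence fixes all such points. Applied to $\overline{\alpha_x}\in U_{\overline{\infty}}$, this shows that if $\overline{\alpha_x}$ fixes some element of $\overline{X}\setminus\overline{\infty}$ then it fixes every such element, in particular $\overline{0}$; the contrapositive is precisely \ref{itm:unit2}$\Rightarrow$\ref{itm:unit3}.

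There is no serious obstacle here; the argument is essentially a bookkeeping of how $\alpha_x$ descends to $\overline{\alpha_x}$. The one point deserving care is the transition to the induced Moufang set on $\overline{X}$, where the sharp transitivity of $U_{\overline{\infty}}$ is what collapses the three a priori distinct possibilities---fixing $\overline{0}$, fixing some point of $\overline{X}\setminus\overline{\infty}$, and fixing all of them---into a single dichotomy, and thereby powers the equivalence of \ref{itm:unit2} and \ref{itm:unit3}.
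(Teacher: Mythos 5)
Your proof is correct and follows essentially the same route as the paper: the equivalence \ref{itm:unit1}$\iff$\ref{itm:unit2} comes from $\overline{0}\cdot\overline{\alpha_x}=\overline{x}$, and \ref{itm:unit2}$\iff$\ref{itm:unit3} from the fact that, by \ref{itm:LM2'}, an element of $U_{\overline{\infty}}$ fixing one point of $\overline{X}\setminus\{\overline{\infty}\}$ must fix them all. Your write-up merely spells out the sharp-transitivity argument that the paper states in one line.
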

\begin{proof}\leavevmode
	\begin{itemize}[leftmargin=2cm, labelwidth=4ex, itemsep=0ex]
	\item[{\ref{itm:unit1} $\Leftrightarrow$ \ref{itm:unit2}}.] We have $x = 0\alpha_x\in \overline{0}\overline{\alpha_x}$, so
	\[x\text{ is a unit}\iff x\nsim0 \iff x\not\in \overline{0}\iff \overline{0}\neq\overline{0} \cdot \overline{\alpha_x}\,.\]
	\item[{\ref{itm:unit2} $\Leftrightarrow$ \ref{itm:unit3}}.] The induced permutation $\overline{\alpha_x}$ is contained in $U_{\overline{\infty}}$. By \ref{itm:LM2'}, this element fixes either all elements or no elements of $\overline{X}\setminus\{\overline{\infty}\}$.
    \qedhere
    \end{itemize}
\end{proof}

\begin{corollary}
	Let $\M$ be a local Moufang set, and $x\in X$ with $x\nsim\infty$. Then $x$ is a unit if and only if $-x$ is a unit.
\end{corollary}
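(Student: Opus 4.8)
The plan is to reduce everything to the equivalent characterization of units given in Proposition~\ref{pr:unit}, using the simple fact that a permutation and its inverse have exactly the same fixed points. Before invoking that proposition for $-x$, however, I would first record that $-x \nsim \infty$, so that the hypothesis of Proposition~\ref{pr:unit} is met for both $x$ and $-x$. This is immediate: by \ref{itm:LM2} the group $U_\infty$ fixes $\infty$ and acts on $X\setminus\overline{\infty}$, and since $0\nsim\infty$ we have $0\in X\setminus\overline{\infty}$; as $-x = 0\cdot\alpha_{-x}$ with $\alpha_{-x}=\alpha_x^{-1}\in U_\infty$, the point $-x$ again lies in $X\setminus\overline{\infty}$, i.e.\ $-x\nsim\infty$.

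The crux of the argument is the relation $\overline{\alpha_{-x}} = \bigl(\overline{\alpha_x}\bigr)^{-1}$. This holds because $\alpha_{-x} = \alpha_x^{-1}$ by definition of $-x$, and because the passage $g\mapsto\overline{g}$ from $\Sym(X,\lsim)$ to $\Sym(\overline{X})$ is a group homomorphism, so it sends inverses to inverses. Consequently $\overline{\alpha_x}$ and $\overline{\alpha_{-x}}$ are mutually inverse permutations of $\overline{X}$, and therefore fix precisely the same elements of $\overline{X}$; in particular, $\overline{\alpha_x}$ fixes $\overline{0}$ if and only if $\overline{\alpha_{-x}}$ fixes $\overline{0}$.

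Putting these together, I would chain the equivalences from Proposition~\ref{pr:unit}\ref{itm:unit2}:
\[
x\text{ is a unit} \iff \overline{\alpha_x}\text{ does not fix }\overline{0} \iff \overline{\alpha_{-x}}\text{ does not fix }\overline{0} \iff -x\text{ is a unit},
\]
where the first and last equivalences are Proposition~\ref{pr:unit} applied to $x$ and to $-x$ respectively, and the middle one is the fixed-point observation above.

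There is essentially no serious obstacle here; the statement is a formal consequence of the symmetry $\alpha_{-x}=\alpha_x^{-1}$. The only point that requires a word of justification, rather than being completely automatic, is the verification that $-x\nsim\infty$, which is needed so that Proposition~\ref{pr:unit} is legitimately applicable to $-x$ and not only to $x$.
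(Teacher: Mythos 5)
Your proof is correct and follows exactly the route the paper intends: the corollary is stated as an immediate consequence of Proposition~\ref{pr:unit}, via the observation that $\alpha_{-x}=\alpha_x^{-1}$, so $\overline{\alpha_{-x}}=\bigl(\overline{\alpha_x}\bigr)^{-1}$ fixes $\overline{0}$ if and only if $\overline{\alpha_x}$ does. Your additional check that $-x\nsim\infty$ (needed to apply the proposition to $-x$) is a sound piece of care that the paper leaves implicit.
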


\begin{definition}
	Let $\M$ be a local Moufang set and $x\in X$. We define $U_x^\circ := \Ker(U_x\to U_{\overline{x}})$ and $U_x^\times := U_x\setminus U_x^\circ$.
\end{definition}

These subsets are related to the units since $x$ is a unit if and only if $x\in U_\infty^\times$. We have $(U_x^\circ)^g = U_{xg}^\circ$ and $(U_x^\times)^g = U_{xg}^\times$.

\subsection{The \texorpdfstring{$\mu$}{mu}-maps}

Now that we have a fixed pair $(0,\infty)$, we know that by Proposition~\ref{prop:twotrans} there must be an element of $G$ swapping these two. We first look at the double cosets $U_0\alpha_x U_0$, and indeed, there is often an element switching our two points.

\begin{proposition}\label{pr:mu}
	For each unit $x\in X$, there is a unique element  $\mu_x\in U_0\alpha_x U_0$ such that $0\mu_x=\infty$ and $\infty\mu_x=0$;
    it is called the \emph{$\mu$-map} corresponding to $x$.

    Moreover, $\mu_x = g\alpha_x h\in U_0^\times\alpha_x U_0^\times$, where $g$ is the unique element of $U_0$ mapping $\infty$ to $-x$ and $h$ is the unique element of $U_0$ mapping $x$ to $\infty$.
\end{proposition}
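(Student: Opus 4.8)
The plan is to build $\mu_x$ explicitly as the product $g\alpha_x h$ and then show it is the unique element of the double coset $U_0\alpha_x U_0$ that swaps $0$ and $\infty$. First I would define the two outer factors. Since $x$ is a unit, $x\nsim 0$, so both $x$ and $\infty$ lie in $X\setminus\overline{0}$; by \ref{itm:LM2} the group $U_0$ acts sharply transitively on $X\setminus\overline{0}$, so there is a unique $h\in U_0$ with $x\cdot h=\infty$. By the preceding corollary, $-x$ is also a unit, hence $-x\nsim 0$, so $-x$ and $\infty$ likewise lie in $X\setminus\overline{0}$ and there is a unique $g\in U_0$ with $\infty\cdot g=-x$. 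Setting $\mu_x:=g\alpha_x h$, I would verify the two defining equalities by tracing the right action through the three factors: since $g$ fixes $0$ and $\alpha_x$ sends $0$ to $x$, we get $0\cdot g\alpha_x h=x\cdot h=\infty$; and since $g$ sends $\infty$ to $-x$, while $(-x)\cdot\alpha_x=0$ by the definition of $-x$ and $h$ fixes $0$, we get $\infty\cdot g\alpha_x h=(-x)\cdot\alpha_x h=0\cdot h=0$. This establishes existence.

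For the refinement $\mu_x\in U_0^\times\alpha_x U_0^\times$, I would check that neither $g$ nor $h$ lies in the kernel $U_0^\circ=\Ker(U_0\to U_{\overline{0}})$, i.e.\ that their induced permutations of $\overline{X}$ are nontrivial. Indeed, $h$ sends $x$ to $\infty$, so $\overline{h}$ sends $\overline{x}$ to $\overline{\infty}$; since $x$ is a unit we have $\overline{x}\neq\overline{\infty}$, so $\overline{h}\neq\overline{\id}$ and $h\in U_0^\times$. The same argument applied to $g$, using that $-x$ is a unit and hence $\overline{-x}\neq\overline{\infty}$, shows $g\in U_0^\times$.

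Finally, for uniqueness I would take an arbitrary $\mu\in U_0\alpha_x U_0$ with $0\mu=\infty$ and $\infty\mu=0$, write $\mu=a\alpha_x b$ with $a,b\in U_0$, and show the swapping conditions force $a=g$ and $b=h$ regardless of the chosen representation. Computing as above, $0\mu=0\cdot a\alpha_x b=x\cdot b$, so $x\cdot b=\infty$ forces $b=h$ by sharp transitivity; applying $h^{-1}$ (which fixes $0$) to $\infty\mu=0$ then gives $\infty\cdot a\alpha_x=0$, whence $\infty\cdot a=0\cdot\alpha_x^{-1}=-x$, forcing $a=g$. Hence $\mu=g\alpha_x h=\mu_x$. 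I expect the only genuine care to be needed in keeping the right-action composition order straight and in invoking the corollary at exactly the two places where it is essential: for the existence of $g$ (which needs $-x\nsim 0$) and for the membership $g\in U_0^\times$ (which needs $-x\nsim\infty$); everything else is a direct consequence of the sharp transitivity in \ref{itm:LM2}.
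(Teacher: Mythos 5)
Your proof is correct and takes essentially the same route as the paper's: both arguments rest on the sharp transitivity of $U_0$ on $X\setminus\overline{0}$ from \ref{itm:LM2}, the fact that $-x$ is again a unit, and the translation of the swapping conditions into ``$h$ maps $x$ to $\infty$'' and ``$g$ maps $\infty$ to $-x$''. The only difference is organizational: the paper derives existence and uniqueness simultaneously by rewriting the conditions on an arbitrary element $g\alpha_x h$ of the double coset, whereas you construct $\mu_x$ explicitly first and then treat an arbitrary representation $a\alpha_x b$ for uniqueness.
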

\begin{proof}
	Let $g\alpha_xh$ be an element of $U_0\alpha_x U_0$, then the conditions translate to
	\[\infty=0g\alpha_xh=xh\quad\text{and}\quad \infty=0h^{-1}\alpha^{-1}_x g^{-1}=(-x) g^{-1}\,,\]
	so $g$ is the unique element of $U_0$ mapping $\infty$ to $-x$, and $h$ is the unique element of $U_0$ mapping $x$ to $\infty$. Since $x\nsim\infty$, both $g$ and $h$ are in $U_0^\times$, and since both $g$ and $h$ are unique, so is $\mu_x$.
\end{proof}

\begin{notation}
    We fix one more object in a local Moufang set $\M$: we pick one $\mu$-map and call it $\tau$. Recall that $\abs{\overline{X}}>2$, so there is at least one unit.
\end{notation}
\begin{lemma}\leavevmode
	\begin{enumerate}[topsep=0pt]
		\item $U_0^\tau = U_\infty$ and $U_\infty^\tau = U_0$.\label{itm:Utau}
        \item $G = \langle U_0,U_\infty\rangle = \langle U_\infty, \tau\rangle$.
		\item Let $x \in X$. Then $x$ is a unit if and only if $x\tau$ is a unit.
	\end{enumerate}
\end{lemma}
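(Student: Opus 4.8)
The plan is to exploit the two defining features of $\tau$: as a $\mu$-map it lies in $U_0\alpha_y U_0\subseteq G$ by Proposition~\ref{pr:mu}, and it swaps $0$ and $\infty$, that is, $0\tau=\infty$ and $\infty\tau=0$. The first part is then immediate from axiom~\ref{itm:LM3}: since $0\tau=\infty$ we get $U_0^\tau=U_{0\tau}=U_\infty$, and symmetrically $U_\infty^\tau=U_{\infty\tau}=U_0$ because $\infty\tau=0$.

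For the second part, the equality $G=\langle U_0,U_\infty\rangle$ is exactly Proposition~\ref{pr:UxUy} applied to the non-equivalent pair $(0,\infty)$. To deduce $G=\langle U_\infty,\tau\rangle$, I would combine the trivial inclusion $\langle U_\infty,\tau\rangle\subseteq G$ with a short bootstrapping step in the other direction: by the first part $U_0=U_\infty^\tau=\tau^{-1}U_\infty\tau$ lies in $\langle U_\infty,\tau\rangle$, and together with $U_\infty\subseteq\langle U_\infty,\tau\rangle$ this yields $\langle U_0,U_\infty\rangle\subseteq\langle U_\infty,\tau\rangle$; the first equality then closes the loop.

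The third part rests on the observation that $\tau\in G\leq\Sym(X,\lsim)$, so $\tau$ is an equivalence-preserving bijection and hence $a\sim b\iff a\tau\sim b\tau$ for all $a,b\in X$. Specialising $b$ to $0$ and to $\infty$ and using $0\tau=\infty$, $\infty\tau=0$, the defining conditions ``$x\nsim 0$ and $x\nsim\infty$'' for $x$ being a unit transform into ``$x\tau\nsim\infty$ and $x\tau\nsim 0$'', which is precisely the statement that $x\tau$ is a unit; the converse is the same argument read backwards.

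All three statements thus reduce to one-line consequences of the axioms together with Propositions~\ref{pr:UxUy} and~\ref{pr:mu}, so I foresee no substantial obstacle. The only point that needs a little care is the backward inclusion in the second part, where one must remember to feed both $U_\infty$ and its conjugate $\tau^{-1}U_\infty\tau$ into the subgroup before invoking $G=\langle U_0,U_\infty\rangle$; and in the third part one should make explicit that membership in $\Sym(X,\lsim)$ is what guarantees $\tau$ respects equivalence in \emph{both} directions.
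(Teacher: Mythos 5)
Your proof is correct and follows essentially the same route as the paper: part (i) from \ref{itm:LM3} together with Proposition~\ref{pr:mu}, part (ii) from Proposition~\ref{pr:UxUy} combined with part (i), and part (iii) from the fact that $\tau$ preserves the equivalence and swaps $0$ and $\infty$. Your added detail (the explicit bootstrapping inclusion in (ii) and the two-way equivalence argument in (iii)) merely spells out steps the paper leaves implicit.
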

\begin{proof}\leavevmode
	\begin{enumerate}[topsep=0pt]
		\item This follows immediately from \ref{itm:LM3} and Proposition~\ref{pr:mu}.
        \item The fact that $G = \langle U_0,U_\infty \rangle$ already follows from Proposition~\ref{pr:UxUy}, and the second equality $\langle U_0,U_\infty\rangle = \langle U_\infty, \tau\rangle$ then follows from~\ref{itm:Utau}.
		\item Since $\tau$ preserves the equivalence and switches $0$ and $\infty$, we have $x\sim0\iff x\tau\sim\infty$ and $x\sim\infty\iff x\tau\sim0$.
        \qedhere
	\end{enumerate}
\end{proof}

\begin{notation}
    For each $x\nsim\infty$, we set $\gamma_x := \alpha_x^\tau\in U_0$, which is the unique element of $U_0$ mapping $\infty$ to $x\tau$.
\end{notation}

\begin{example}
	In our main example of $\M(R)$, a point of the projective line is a unit if it can be written as $[1,r]$ with $r\in R^\times$. The two points we chose, $[1,0]$ and $[0,1]$, are usually denoted by $0$ and $\infty$ respectively. To find a $\mu$-map corresponding to $[1,r]$, we need the unique element of $U_0$ mapping $\infty$ to $-[1,r]$, and the unique element of $U_0$ mapping $[1,r]$ to $\infty$.

    We first compute $-[1,r]$. Now $\alpha_{[1,r]} = \begin{bsmallmatrix} 1 & r \\ 0 & 1 \end{bsmallmatrix}$, so $-[1,r] = 0\alpha_{[1,r]}^{-1} = [1,-r] = [-r^{-1},1]$. The unique element of $U_0$ mapping $\infty$ to $-[1,r]$ is $\begin{bsmallmatrix} 1 & 0 \\ -r^{-1} & 1 \end{bsmallmatrix}$; the unique element of $U_0$ mapping $[1,r]$ to $\infty$ is also $\begin{bsmallmatrix} 1 & 0 \\ -r^{-1} & 1 \end{bsmallmatrix}$. We find
	\[\mu_{[1,r]} = \begin{bmatrix} 1 & 0 \\ -r^{-1} & 1 \end{bmatrix} \begin{bmatrix} 1 & r \\ 0 & 1 \end{bmatrix} \begin{bmatrix} 1 & 0 \\ -r^{-1} & 1 \end{bmatrix} = \begin{bmatrix} 0 & r \\ -r^{-1} & 0 \end{bmatrix}\,.\]
	In the specific case where $r=1$, we get $\begin{bsmallmatrix} 0 & 1 \\ -1 & 0 \end{bsmallmatrix}$, which is the usual choice for $\tau$.
\end{example}

Many of the following identities will be crucial in later calculations.

\begin{lemma} \label{prop:mu}Let $x$ be a unit, and $\til x := (-(x\tau^{-1}))\tau$.
Then
	\begin{multicols}{2}
	\begin{enumerate}[topsep=0pt]
		\item $\mu_x$ does not depend on the choice of $\tau$; \label{itm:muindep}
		\item $\mu_x = \alpha_{(-x)\tau^{-1}}^\tau\,\alpha_x\,\alpha_{-(x\tau^{-1})}^\tau$; \label{itm:muform}
		\item $\mu_{-x} = \mu_x^{-1}$;
		\item $\mu_{x\tau} = \mu_{-x}^\tau$; \label{itm:mutau}
		\item $\mu_{x} = \alpha_x\alpha_{-(x\tau^{-1})}^\tau\,\alpha_{-\til x}$; \label{itm:muform2}
		\item $\til x = -((-x)\mu_x)$;\label{itm:tilmu}
		\item $\til x$ does not depend on the choice of $\tau$; \label{itm:tiltau}
		\item $\mu_{-x} = \alpha_{-\til x}\mu_{-x}\alpha_x\mu_{-x}\alpha_{\til -x}$. \label{itm:muform3}
	\end{enumerate}
	\end{multicols}
\end{lemma}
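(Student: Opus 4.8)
The common engine for all eight identities is the \emph{regularity principle} coming from~\ref{itm:LM2}: since $U_\infty$ acts sharply transitively on $X\setminus\overline{\infty}$, any element of $U_\infty$ fixing a point of $X\setminus\overline{\infty}$ (in particular fixing $0$) is the identity, and symmetrically for $U_0$ on $X\setminus\overline{0}$. Combined with the factorization $\mu_x=g\alpha_x h$ of Proposition~\ref{pr:mu}, each claim is proved by rewriting a candidate element until it lands inside a single root group, where this principle forces it to be trivial. Item (i) is immediate, as $\mu_x$ is defined in Proposition~\ref{pr:mu} with no reference to $\tau$. For (ii), recall that $g\in U_0$ is the unique element sending $\infty$ to $-x$ and $h\in U_0$ the unique one sending $x$ to $\infty$; since $\gamma_y=\alpha_y^\tau$ is the element of $U_0$ sending $\infty\mapsto y\tau$, solving $y\tau=-x$ gives $g=\alpha_{(-x)\tau^{-1}}^\tau$, while $h^{-1}$ sends $\infty\mapsto x$ gives $h=\gamma_{x\tau^{-1}}^{-1}=\alpha_{-(x\tau^{-1})}^\tau$; substituting into $\mu_x=g\alpha_x h$ yields the formula. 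For (iii), invert this: $\mu_x^{-1}=h^{-1}\alpha_{-x}g^{-1}\in U_0\alpha_{-x}U_0$ still swaps $0$ and $\infty$, so the uniqueness in Proposition~\ref{pr:mu} identifies it with $\mu_{-x}$.

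The crux is (v). Write $P:=\alpha_x\,h\,\alpha_{-\til x}$ for the right-hand side, noting that its middle factor is exactly the $h=\alpha_{-(x\tau^{-1})}^\tau$ from (ii). A short computation on the two marked points shows $P$ swaps them: from $0\alpha_x=x$, $xh=\infty$, $\infty\alpha_{-\til x}=\infty$ we get $0P=\infty$, while $\infty h=(-(x\tau^{-1}))\tau=\til x$ (the definition of $\til x$) together with $\til x\,\alpha_{-\til x}=0$ gives $\infty P=0$. Now form $\mu_{-x}P$ using $\mu_{-x}=h^{-1}\alpha_{-x}g^{-1}$: the inner block rewrites as $\alpha_{-x}g^{-1}\alpha_x=(g^{-1})^{\alpha_x}\in U_0^{\alpha_x}=U_x$ by~\ref{itm:LM3}, and conjugating this by $h$ (which sends $x\mapsto\infty$) lands in $U_x^h=U_\infty$; since the trailing $\alpha_{-\til x}$ also lies in $U_\infty$, we conclude $\mu_{-x}P\in U_\infty$. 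As $0\mu_{-x}=\infty$ and $\infty P=0$, this element fixes $0$, so the regularity principle gives $\mu_{-x}P=\id$, i.e.\ $P=\mu_{-x}^{-1}=\mu_x$.

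With (v) available the next three parts are short. For (vi) I track $-x$ through (v): $(-x)\alpha_x=0$ is fixed by the $U_0$-factor $h$, and $0\alpha_{-\til x}=-\til x$, so $(-x)\mu_x=-\til x$, which is exactly $\til x=-((-x)\mu_x)$. Then (vii) follows at once, since the right-hand side of (vi) involves only $\mu_x$ (independent of $\tau$ by (i)) and negation (defined through $U_\infty$, hence also independent of $\tau$). For (iv) I use a $\tau$-conjugated form of the uniqueness: conjugating Proposition~\ref{pr:mu} for $\mu_{-x}$ by $\tau$ and using $U_0^\tau=U_\infty$ and $\alpha_{-x}^\tau=\gamma_{-x}$ shows that $\mu_{-x}^\tau$ is the unique element of $U_\infty\gamma_{-x}U_\infty$ swapping $0$ and $\infty$. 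On the other hand $x\tau$ is again a unit, and applying (v) to it gives $\mu_{x\tau}=\alpha_{x\tau}\,\gamma_{-x}\,\alpha_{-\til{(x\tau)}}\in U_\infty\gamma_{-x}U_\infty$, an element that also swaps $0$ and $\infty$; uniqueness then forces $\mu_{x\tau}=\mu_{-x}^\tau$.

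Finally (viii), which I expect to be the main obstacle, as it is the one identity not immediately of ``$U_0(\cdot)U_0$ versus $U_\infty(\cdot)U_\infty$'' shape and it has to coordinate (v), (vi) and two occurrences of $\mu_{-x}$. Let $Q$ denote its right-hand side. Using $\mu_{-x}=\mu_x^{-1}$ I would compute $\mu_x Q$: the leading block $\mu_x\alpha_{-\til x}\mu_x^{-1}$ is a single element $c_1\in U_0$, and applying (v) to $-x$ turns $\mu_{-x}\alpha_{\til{(-x)}}$ into $\alpha_{-x}k$, where $k=\alpha_{-((-x)\tau^{-1})}^\tau$ is the middle factor of (v) for $-x$; the intervening $\alpha_x\alpha_{-x}=\id$ then collapses everything to $\mu_x Q=c_1 k\in U_0$. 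It remains to see that $c_1=k^{-1}=\alpha_{(-x)\tau^{-1}}^\tau$, for which I compare the two $U_\infty$-elements $\alpha_{-\til x}$ and $(\alpha_{(-x)\tau^{-1}}^\tau)^{\mu_x}$ on the point $0$: the former gives $-\til x$, while the latter gives $(-x)\mu_x$, which equals $-\til x$ by (vi); regularity in $U_\infty$ forces them equal, hence $c_1 k=\id$ and $Q=\mu_{-x}$. The delicate point throughout is the bookkeeping distinguishing $(-x)\tau^{-1}$ from $-(x\tau^{-1})$, and the non-commutativity of the root groups, so that each rewriting genuinely ends inside a single $U_0$ or $U_\infty$ before the regularity principle is invoked.
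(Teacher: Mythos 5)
Your proof is correct, but it routes the hard items differently from the paper. Items (i)--(iii), \ref{itm:tilmu} and \ref{itm:tiltau} coincide with the paper's arguments. For the rest, the paper proves \ref{itm:mutau} \emph{first}, by rewriting the desired identity through a chain of equivalences until one side visibly lies in $U_\infty$ (the one place where \ref{itm:LM3} is invoked) and then evaluating at $0$; it derives \ref{itm:muform2} from \ref{itm:mutau} via $\mu_x=\mu_{x\tau}^{-\tau^{-1}}$ together with a swap of $\tau$ for $\tau^{-1}$, and it obtains \ref{itm:muform3} by a substitution trick: since \ref{itm:muform2} is independent of the choice of $\tau$, substitute $-x$ for $x$ and the $\mu$-map $\mu_{-x}$ for $\tau$, then apply \ref{itm:tilmu}. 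You instead prove \ref{itm:muform2} directly (your $P$ swaps $0$ and $\infty$, and $\mu_{-x}P$ lands in $U_\infty$ and fixes $0$, so sharp transitivity kills it), deduce \ref{itm:mutau} from the $\tau$-conjugate of the uniqueness in Proposition~\ref{pr:mu} --- both $\mu_{x\tau}$ and $\mu_{-x}^\tau$ lie in $U_\infty\gamma_{-x}U_\infty$ and swap the two points --- and verify \ref{itm:muform3} by a direct computation in which $c_1=k^{-1}$ follows from comparing the $U_\infty$-elements $\alpha_{-\til x}$ and $(\alpha_{(-x)\tau^{-1}}^\tau)^{\mu_x}$ at the point $0$ via \ref{itm:tilmu}; I checked these steps and they are sound (in \ref{itm:muform2} one should note that $\til x$ is a unit so that $\alpha_{-\til x}$ is defined, but this is routine). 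Your uniform ``land in a single root group and evaluate at a point'' method is more transparent than the paper's equivalence chain for \ref{itm:mutau}, and your double-coset uniqueness argument there is genuinely slicker. What the paper's routing buys is economy in the use of \ref{itm:LM3}: its entire proof invokes \ref{itm:LM3} exactly once, a fact the paper exploits in Lemma~\ref{lem:huafix}, where the argument must be replayed inside Construction~\ref{constr:MUtau} \emph{before} \ref{itm:LM3} is available and each use has to be re-verified by hand; your proofs of \ref{itm:muform2} and \ref{itm:muform3} consume several instances ($U_0^{\alpha_x}=U_x$, $U_x^h=U_\infty$, $U_\infty^{\mu_{-x}}=U_0$, $U_0^{\mu_x}=U_\infty$), so adopting them would make that later bookkeeping heavier. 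The paper's substitution device also recurs later (e.g.\ in Lemma~\ref{prob:specialmu}\ref{itm:special-iv}), so it carries independent value beyond this lemma.
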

\begin{proof}\leavevmode
	\begin{enumerate}[topsep=0pt]
		\item This follows from the definition of $\mu_x$.
		\item By Proposition~\ref{pr:mu}, $\mu_x = g\alpha_x h$, where $g$ is the unique element of $U_0$ mapping $\infty$ to $-x$ and $h$ is the unique element of $U_0$ mapping $x$ to $\infty$. Hence $g = \gamma_{(-x)\tau^{-1}} = \alpha_{(-x)\tau^{-1}}^\tau$ and $h = \gamma_{x\tau^{-1}}^{-1} = \alpha_{x\tau^{-1}}^{-\tau} = \alpha_{-(x\tau^{-1})}^\tau$.
		\item As $\mu_x$ swaps $0$ and $\infty$, so does $\mu_x^{-1}\in U_0\alpha_x^{-1} U_0 = U_0\alpha_{-x} U_0$. Since $\mu_{-x}$ is the unique such element in $U_0\alpha_{-x} U_0$, we must have $\mu_x^{-1}=\mu_{-x}$.
		\item By \ref{itm:muindep}, we can use \ref{itm:muform} with $\tau^{-1}$ in place of $\tau$ for the right-hand side, so we get
		\begin{align*}
			\mu_{x\tau} = \mu_{-x}^\tau \quad&\Longleftrightarrow\quad
			\alpha_{(-(x\tau))\tau^{-1}}^\tau\,\alpha_{x\tau}\,\alpha_{-x}^\tau =
			\bigl(\alpha_{x\tau}^{\tau^{-1}}\,\alpha_{-x}\,\alpha_{-((-x)\tau)}^{\tau^{-1}}\bigr)^\tau \\
			&\Longleftrightarrow\quad \alpha_{(-(x\tau))\tau^{-1}}^\tau\,\alpha_{x\tau}\,\alpha_{-x}^\tau =
			\alpha_{x\tau}\,\alpha_{-x}^\tau\,\alpha_{-((-x)\tau)}\\
			&\Longleftrightarrow\quad \alpha_{-x}^{-\tau}\alpha_{x\tau}^{-1}\alpha_{(-(x\tau))\tau^{-1}}^\tau\,\alpha_{x\tau}\,\alpha_{-x}^\tau = \alpha_{-((-x)\tau)} \\
			&\Longleftrightarrow\quad \alpha_{-x}^{-\tau}\alpha_{x\tau}^{-1}\alpha_{-((-(x\tau))\tau^{-1})}^\tau\,\alpha_{x\tau}\,\alpha_{-x}^\tau =
			\alpha_{(-x)\tau}\,.
		\end{align*}
		By \ref{itm:LM3}, the left-hand side belongs to $U_0^{\alpha_{x\tau}\,\alpha_{-x}^\tau} = U_{0\cdot \alpha_{x\tau}\,\alpha_{-x}^\tau} = U_\infty$, so the left-hand side is equal to $\alpha_y$ for
		\begin{align*}
			y &= 0\cdot\alpha_{-x}^{-\tau}\alpha_{x\tau}^{-1}\alpha_{-((-(x\tau))\tau^{-1})}^\tau\,\alpha_{x\tau}\,\alpha_{-x}^\tau \\
				&= -(x\tau)\cdot\tau^{-1}\alpha_{-((-(x\tau))\tau^{-1})}\tau\,\alpha_{x\tau}\,\alpha_{-x}^\tau \\
				&= \infty\cdot\alpha_{-x}^\tau = (-x)\tau .
		\end{align*}
		Hence the last of the equivalent equalities holds, and indeed $\mu_{x\tau} = \mu_{-x}^\tau$.
		\item By \ref{itm:muform}, we have
		\[\mu_{x\tau} = \alpha_{(-(x\tau))\tau^{-1}}^\tau\,\alpha_{x\tau}\,\alpha_{-x}^\tau\,,\]
		and hence by \ref{itm:mutau},
		\[\mu_x = \mu_{x\tau}^{-\tau^{-1}} = \alpha_{x}\,\alpha_{-x\tau}^{\tau^{-1}}\,\alpha_{-(-(x\tau))\tau^{-1}}\,.\]
		Since $\mu_x$ does not depend on the choice of $\tau$, we can replace $\tau$ by $\tau^{-1}$ in the right-hand side, which gives the required identity.
		\item When we apply both sides of the identity \ref{itm:muform2} to $-x$, we get
		\[(-x)\cdot\mu_x = (-x)\cdot\alpha_x\alpha_{-(x\tau^{-1})}^\tau\,\alpha_{-\til x} = 0\cdot\alpha_{-\til x} = -\til x\,\]
		which gives $\til x = - ((-x)\mu_x)$.
		\item Since $\mu_x$ does not depend on the choice of $\tau$, and $\til x = - ((-x)\mu_x)$, we conclude that $\til x$ does not depend on the choice of $\tau$ either.
		\item The equation \ref{itm:muform2} does not depend on the choice of $\tau$. If we substitute $-x$ for $x$ and $\mu_{-x}$ for $\tau$, then we get, using \ref{itm:tilmu},
		\[\mu_{-x} = \alpha_{-x}\mu_x\alpha_{-((-x)\mu_x)}\mu_{-x}\alpha_{-\til -x} = \alpha_{-x}\mu_x\alpha_{\til x}\mu_{-x}\alpha_{-\til -x}\,.\]
		Moving all terms except $\mu_{-x}$ from the right hand side to the left hand side gives the result.\qedhere
	\end{enumerate}
\end{proof}

\begin{proposition}\label{prop:sumform}
	Let $x,y\in X$ be units such that $x\nsim y$. Then $z := x\tau^{-1}\alpha_{-(y\tau^{-1})}\tau$ is independent on the choice of $\tau$. Furthermore, $z = x\alpha_{-y}\mu_y\alpha_{\til y}$ and $\til z = y\alpha_{-x}\mu_x\alpha_{\til x}$.
\end{proposition}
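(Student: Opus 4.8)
The plan is to establish the formula $z = x\,\alpha_{-y}\,\mu_y\,\alpha_{\til y}$ first, since it delivers the $\tau$-independence of $z$ as an immediate byproduct, and then to obtain the formula for $\til z$ from it by an $x\leftrightarrow y$ symmetry. For the first formula, I would rewrite the conjugation $\alpha_{-(y\tau^{-1})}^{\tau}=\tau^{-1}\alpha_{-(y\tau^{-1})}\tau$, so that the definition of $z$ reads simply $z=x\cdot\alpha_{-(y\tau^{-1})}^{\tau}$. The idea is then to extract the factor $\alpha_{-(y\tau^{-1})}^{\tau}$ from the expansion of $\mu_y$: identity~\ref{itm:muform2} applied to $y$ gives $\mu_y=\alpha_y\,\alpha_{-(y\tau^{-1})}^{\tau}\,\alpha_{-\til y}$, and solving for the middle factor (using $\alpha_y^{-1}=\alpha_{-y}$ and $\alpha_{-\til y}^{-1}=\alpha_{\til y}$) yields $\alpha_{-(y\tau^{-1})}^{\tau}=\alpha_{-y}\,\mu_y\,\alpha_{\til y}$, whence $z=x\,\alpha_{-y}\,\mu_y\,\alpha_{\til y}$. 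Since $\mu_y$ is independent of $\tau$ by~\ref{itm:muindep} and $\til y$ is independent of $\tau$ by~\ref{itm:tiltau}, while the $\alpha$-maps do not involve $\tau$ at all, the right-hand side does not depend on the chosen $\mu$-map $\tau$; this proves the second assertion and the independence claim at once.

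The heart of the argument for $\til z$ is a purely group-theoretic cancellation: for all $a,b\nsim\infty$ one has $-(a\,\alpha_{-b})=b\,\alpha_{-a}$. To prove it, I would observe that $\alpha_a\alpha_{-b}\in U_\infty$ and $0\cdot\alpha_a\alpha_{-b}=a\,\alpha_{-b}$; since $\alpha_a\alpha_{-b}$ maps $X\setminus\overline{\infty}$ into itself we get $a\alpha_{-b}\nsim\infty$, and by the sharp transitivity in~\ref{itm:LM2} this forces $\alpha_{a\alpha_{-b}}=\alpha_a\alpha_{-b}$. Inverting gives $\alpha_{a\alpha_{-b}}^{-1}=\alpha_b\alpha_{-a}$, so $-(a\alpha_{-b})=0\cdot\alpha_{a\alpha_{-b}}^{-1}=0\cdot\alpha_b\alpha_{-a}=b\,\alpha_{-a}$. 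Before using this I would record that $z$ is a unit, so that $\til z$ is meaningful: with $a:=x\tau^{-1}$ and $b:=y\tau^{-1}$ one has $z\tau^{-1}=a\,\alpha_{-b}\nsim\infty$ (whence $z\nsim0$), and a short computation in the induced Moufang set on $\overline X$ shows that $\overline{z\tau^{-1}}=\overline{0}$ would force $\overline a=\overline b$, i.e.\ $x\sim y$; as $x\nsim y$ this gives $z\tau^{-1}\nsim0$, i.e.\ $z\nsim\infty$. This is the only place where the hypothesis $x\nsim y$ is used.

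For the final formula, by definition $\til z=(-(z\tau^{-1}))\tau$ with $z\tau^{-1}=x\tau^{-1}\,\alpha_{-(y\tau^{-1})}=a\,\alpha_{-b}$. The cancellation identity then gives $-(z\tau^{-1})=b\,\alpha_{-a}=y\tau^{-1}\,\alpha_{-(x\tau^{-1})}$, hence $\til z=y\tau^{-1}\,\alpha_{-(x\tau^{-1})}\,\tau$. But this is exactly the defining expression for $z$ with the roles of $x$ and $y$ interchanged, so the formula proved in the first step, applied to the pair $(y,x)$ (which satisfies the same hypotheses), yields $\til z=y\,\alpha_{-x}\,\mu_x\,\alpha_{\til x}$.

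I expect the only delicate points to be the careful bookkeeping of the conjugations $(\cdot)^{\tau}$ and the verification that $z$ is a unit (where $x\nsim y$ enters); the algebraic core is the one-line cancellation $-(a\alpha_{-b})=b\,\alpha_{-a}$, which reduces the entire $\til z$ computation to the already-proved formula via the symmetry $x\leftrightarrow y$, so that no separate laborious calculation is required.
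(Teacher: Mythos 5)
Your proof is correct and follows essentially the same route as the paper's: both extract $\alpha_{-(y\tau^{-1})}^\tau=\alpha_{-y}\mu_y\alpha_{\til y}$ from Lemma~\ref{prop:mu}\ref{itm:muform2} to get the first formula and the $\tau$-independence, and both obtain $\til z$ by inverting $\alpha_{x\tau^{-1}}\alpha_{-(y\tau^{-1})}$ (your cancellation $-(a\alpha_{-b})=b\alpha_{-a}$ is exactly the paper's step $\til z = 0\cdot(\alpha_{x\tau^{-1}}\alpha_{-(y\tau^{-1})})^{-1}\tau$) and appealing to the $x\leftrightarrow y$ symmetry. Your explicit verification that $z$ is a unit, using $x\nsim y$, is a detail the paper leaves implicit, and it is a welcome addition.
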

\begin{proof}
	By Lemma~\ref{prop:mu}\ref{itm:muform2} we get $z = x\alpha_{-(y\tau^{-1})}^\tau = x\alpha_{-y}\mu_{y}\alpha_{\til y}$, so it does not depend on the choice of $\tau$. Now we have $z = 0\cdot\alpha_{x\tau^{-1}}\alpha_{-(y\tau^{-1})}\tau$, so
    \[ \til z = 0\cdot(\alpha_{x\tau^{-1}}\alpha_{-(y\tau^{-1})})^{-1}\tau = 0\cdot\alpha_{y\tau^{-1}}\alpha_{-(x\tau^{-1})}\tau . \]
    This coincides with our definition of $z$ with $x$ and $y$ interchanged, so $\til z = y\alpha_{-x}\mu_{x}\alpha_{\til x}$.
\end{proof}

\subsection{The Hua maps}

Now that we have found a set of elements of $G$ swapping $0$ and $\infty$, we attempt to find the elements fixing $0$ and $\infty$.

\begin{definition}
	The \emph{Hua map} $h_{x,\tau}$ corresponding to a unit $x$ is the element
	\[h_{x,\tau} = \tau\mu_x = \tau\alpha_x\tau^{-1}\alpha_{-(x\tau^{-1})}\tau\,\alpha_{-\til x}\in G\,.\]
\end{definition}

\begin{remark}
	The $\mu$-maps did not depend on the choice of $\tau$, so the Hua maps do, as is made clear by the inclusion of $\tau$ in the notation $h_{x,\tau}$. When it is clear (or irrelevant) which $\tau$ is used, we will omit this addition and simply write $h_x$.
\end{remark}

Some basic properties of the Hua maps are the following:
\begin{lemma}\label{lem:huaprop} Let $x,y \in X$ be units. Then
	\begin{multicols}{2}
	\begin{enumerate}[topsep=0pt,itemsep=0.4ex]
		\item $h_{x,\tau^{-1}} = h_{x\tau,\tau}^{-1}$;
		\item $\mu_{xh_y} = \mu_x^{h_y}$;
		\item $h_{x\tau} = h_{-x}^\tau$;\label{itm:hua-conj-tau}
		\item $h_{x h_y} = h_{-y}h_{x\tau}^{-1}h_y$.
	\end{enumerate}
	\end{multicols}
\end{lemma}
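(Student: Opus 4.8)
The plan is to reduce every identity to the $\mu$-map relations of Lemma~\ref{prop:mu} together with the uniqueness of $\mu$-maps from Proposition~\ref{pr:mu}. The starting observation is that each Hua map $h_{x,\tau}=\tau\mu_x$ fixes both $0$ and $\infty$: since $\tau$ and $\mu_x$ each swap these two points, their product fixes each of them. In particular $h_y$ stabilises the pair $(0,\infty)$, which is exactly what makes the conjugation statement~(ii) tractable. I would prove~(i) and~(iii) first (they are purely computational), then~(ii) by a uniqueness argument, and finally deduce~(iv) from~(ii) and~(iii).

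For~(i) and~(iii) I compute directly. Note first that $\tau^{-1}$ is again a $\mu$-map (if $\tau=\mu_w$ then $\tau^{-1}=\mu_{-w}$), so it is a legitimate choice in the Hua-map notation, and $h_{x,\tau^{-1}}=\tau^{-1}\mu_x$, using that $\mu_x$ is independent of $\tau$ (Lemma~\ref{prop:mu}\ref{itm:muindep}). On the other hand $h_{x\tau,\tau}=\tau\mu_{x\tau}=\tau\mu_{-x}^\tau=\mu_{-x}\tau=\mu_x^{-1}\tau$, where I use $\mu_{x\tau}=\mu_{-x}^\tau$ (Lemma~\ref{prop:mu}\ref{itm:mutau}) and $\mu_{-x}=\mu_x^{-1}$; inverting gives $h_{x\tau,\tau}^{-1}=\tau^{-1}\mu_x$, which proves~(i). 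The same two relations give~(iii) at once, since $h_{x\tau}=\tau\mu_{x\tau}=\tau\mu_{-x}^\tau=\mu_{-x}\tau$, while $h_{-x}^\tau=\tau^{-1}(\tau\mu_{-x})\tau=\mu_{-x}\tau$.

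The central step is~(ii), which I would settle by uniqueness rather than by computation. Since $h_y$ fixes $0$, axiom~\ref{itm:LM3} gives $U_0^{h_y}=U_{0h_y}=U_0$; since $h_y$ fixes $\infty$, likewise $U_\infty^{h_y}=U_\infty$, and as $\alpha_x\in U_\infty$ maps $0$ to $x$, the conjugate $\alpha_x^{h_y}\in U_\infty$ maps $0$ to $xh_y$, i.e.\ $\alpha_x^{h_y}=\alpha_{xh_y}$. Because $h_y$ preserves equivalence and fixes $0$ and $\infty$, the point $xh_y$ is again a unit, so $\mu_{xh_y}$ is defined. Conjugating $\mu_x\in U_0\alpha_xU_0$ by $h_y$ then yields an element $\mu_x^{h_y}\in U_0\alpha_{xh_y}U_0$ which, as $h_y$ fixes $0$ and $\infty$ and $\mu_x$ swaps them, still swaps $0$ and $\infty$. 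By the uniqueness statement in Proposition~\ref{pr:mu} this element must equal $\mu_{xh_y}$, which is~(ii).

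Finally~(iv) follows by expanding both sides and matching words. On the left, (ii) and $h_y=\tau\mu_y$ give $h_{xh_y}=\tau\mu_{xh_y}=\tau\mu_x^{h_y}=\tau h_y^{-1}\mu_x h_y=\tau\mu_{-y}\tau^{-1}\mu_x\tau\mu_y$. On the right, using $h_{-y}=\tau\mu_{-y}$, the relation $h_{x\tau}=\mu_{-x}\tau$ from~(iii) (so that $h_{x\tau}^{-1}=\tau^{-1}\mu_x$), and $h_y=\tau\mu_y$, I obtain $h_{-y}h_{x\tau}^{-1}h_y=\tau\mu_{-y}\tau^{-1}\mu_x\tau\mu_y$, the same word. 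I expect the only genuine obstacle to be~(ii): one must verify both that the conjugate lands in the correct double coset $U_0\alpha_{xh_y}U_0$ and that $xh_y$ is really a unit so that $\mu_{xh_y}$ exists, after which all of~(i), (iii) and~(iv) are formal manipulations of the established $\mu$-identities.
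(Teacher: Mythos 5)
Your proof is correct, and parts (i) and (iii) coincide with the paper's: both reduce to $\mu_{x\tau}=\mu_{-x}^\tau$ (Lemma~\ref{prop:mu}\ref{itm:mutau}) and $\mu_{-x}=\mu_x^{-1}$. Where you genuinely diverge is in (ii). The paper proves it in one line inside the $\mu$-calculus, applying Lemma~\ref{prop:mu}\ref{itm:mutau} twice --- once with $\tau$ and once with $\mu_y$ in place of $\tau$, which is licensed by the independence statement \ref{itm:muindep}: $\mu_{xh_y}=\mu_{x\tau\mu_y}=\mu_{x\tau}^{-\mu_y}=(\mu_x^{-\tau})^{-\mu_y}=\mu_x^{\tau\mu_y}=\mu_x^{h_y}$. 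You instead argue by uniqueness: since $h_y$ fixes $0$ and $\infty$, axiom \ref{itm:LM3} gives $U_0^{h_y}=U_0$ and $\alpha_x^{h_y}=\alpha_{xh_y}$ (here you are in effect re-proving Lemma~\ref{prop:huaAut}, which in the paper only appears \emph{after} this lemma, so your self-contained derivation from \ref{itm:LM3} is needed and you supply it), $xh_y$ is a unit, and $\mu_x^{h_y}\in U_0\alpha_{xh_y}U_0$ swaps $0$ and $\infty$, so Proposition~\ref{pr:mu} forces $\mu_x^{h_y}=\mu_{xh_y}$. Your argument is slightly longer but more conceptual, and it buys generality: it shows $\mu_{xg}=\mu_x^g$ for \emph{any} $g\in G$ fixing $0$ and $\infty$, not just Hua maps, whereas the paper's computation is shorter given the $\mu$-identities already in hand. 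For (iv) the two routes reconverge: the paper applies Lemma~\ref{prop:mu}\ref{itm:mutau} twice directly to get $\mu_{x\tau\mu_y}=\mu_{-y}\mu_{x\tau}^{-1}\mu_y$, while you substitute (ii) and (iii) into both sides; either way both sides reduce to the same word $\tau\mu_{-y}\tau^{-1}\mu_x\tau\mu_y$, so this difference is cosmetic.
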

\begin{proof}\leavevmode
	\begin{enumerate}[topsep=0pt]
		\item We have $h_{x,\tau^{-1}} = h_{x\tau,\tau}^{-1}$ if and only if $\tau^{-1}\mu_x = (\tau\mu_{x\tau})^{-1}$, which holds by Lemma~\ref{prop:mu}\ref{itm:mutau}.
		\item Applying Lemma~\ref{prop:mu}\ref{itm:mutau} twice (with $\tau$ and $\mu_y$), we get
			\[\mu_{xh_y} = \mu_{x\tau\mu_y} = \mu_{x\tau}^{-\mu_y} = (\mu_x^{-\tau})^{-\mu_y} = \mu_x^{\tau\mu_y} = \mu_x^{h_y}\,.\]
		\item Using Lemma~\ref{prop:mu}\ref{itm:mutau}, we get
			\[h_{x\tau} = \tau\mu_{x\tau} = \tau\mu_{-x}^\tau = (\tau\mu_{-x})^\tau = h_{-x}^\tau\,.\]
		\item Using Lemma~\ref{prop:mu}\ref{itm:mutau} twice, and inserting $\tau^{-1}\tau$, we get
			\[h_{x h_y} = \tau\mu_{x\tau\mu_y} = \tau\mu_{-y}\mu^{-1}_{x\tau}\mu_y = \tau\mu_{-y}\,\mu^{-1}_{x\tau}\tau^{-1}\,\tau\mu_y = h_{-y}h_{x\tau}^{-1}h_y\,.\qedhere\]
	\end{enumerate}
\end{proof}

\begin{definition}
	The \emph{Hua subgroup} is
	\[H := \langle \mu_x\mu_y\mid x,y\text{ units}\rangle\,.\]
    Since $\tau$ is chosen to be a $\mu$-map, we also have
    \[H = \langle h_x\mid x\text{ a unit}\rangle.\]
\end{definition}
Note that $H\leq G_{0,\infty}$, where $G_{0,\infty}$ is the two-point stabilizer of $0$ and $\infty$.
In the next section, we will show that, in fact, $H=G_{0,\infty}$. The action of Hua maps on $U_\infty$ by conjugation behaves well with respect to the action on $X$:
\begin{lemma}\label{prop:huaAut}
	Let $x$ be a unit. Then for any $y\in X\setminus\overline{\infty}$, we have $\alpha_y^{h_x} = \alpha_{yh_x}$.
	In particular, $(\alpha_y\alpha_z)^{h_x} = \alpha_{yh_x}\alpha_{zh_x}$ for all $y,z\in X\setminus\overline{\infty}$.
\end{lemma}
\begin{proof}
	Since Hua maps normalize $U_\infty$, we have $\alpha_y^{h_x}\in U_\infty$. Since $0\alpha_y^{h_x} = 0h_x^{-1}\alpha_yh_x = yh_x$, the first equality holds. The second identity now follows immediately.
\end{proof}

\begin{example}
	In Example~\ref{ex:PSL2R}, we calculated that for $\M(R)$ over a local ring $R$, we have $\mu_{[1,r]} = \begin{bsmallmatrix} 0 & r \\ -r^{-1} & 0 \end{bsmallmatrix}$ and we took $\tau = \mu_{[1,1]}$.
    For the Hua maps and the Hua subgroup, this means (using the fact that $\begin{bsmallmatrix*}[r] -1 & 0 \\ 0 & -1 \end{bsmallmatrix*} = \begin{bsmallmatrix} 1 & 0 \\ 0 & 1 \end{bsmallmatrix}$) that
	\[h_{[1,r]} = \begin{bmatrix} r^{-1} & 0 \\ 0 & r \end{bmatrix},\qquad H = \left\{\begin{bmatrix} r^{-1} & 0 \\ 0 & r \end{bmatrix}\,\middle|\, r\in R^\times\right\}.\]
\end{example}

\section{The Hua subgroup}\label{se:3}

\subsection{Quasi-invertibility}

Inspired by \cite[\textsection 4]{MR3250775}, we will introduce the notion of quasi-invertibility for local Moufang sets.
(This notion is, in turn, inspired by the notion of quasi-invertibility for Jordan algebras and Jordan pairs.)
This notion, and the identity in Proposition~\ref{prop:quainv} that follows from it, will be crucial to show that the Hua subgroup equals $G_{0,\infty}$.

\begin{definition}
	A pair $(x,y)\in X^2$ s.t.\ $x\nsim\infty$ and $y\nsim\infty$ is called \emph{quasi-invertible} if either $x\tau \nsim -y$, or $x\sim 0$, or $y\sim 0$.
\end{definition}

\begin{definition}
	Let $(x,y)$ be quasi-invertible. Then we define the \emph{left quasi-inverse} and \emph{right quasi-inverse} as
	\[\prescript{x}{}y := (-y)\cdot\alpha_{-x}^\tau\qquad\text{ and }\qquad x^y := -(x\cdot\alpha_y^{\tau^{-1}})\,.\]
\end{definition}

Note that the condition for quasi-invertibility ensures that the left and right quasi-inverse do not lie in the $\infty$-branch. Furthermore, $x\sim 0\iff x^y\sim0$ and $y\sim 0\iff \prescript{x}{}y\sim0$.

\begin{proposition}\label{prop:quainv}
	Let $(x,y)$ be quasi-invertible with $y\nsim0$ (so $x\nsim\infty\nsim y$). Then
	\[\alpha_{\prescript{x}{}y}\,\alpha_x^\tau\,\alpha_y\,\alpha_{x^y}^\tau = \mu_{\prescript{x}{}y}\,\mu_y\,.\]
\end{proposition}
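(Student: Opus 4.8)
The plan is to show that the left-hand side $L := \alpha_{\prescript{x}{}y}\,\alpha_x^\tau\,\alpha_y\,\alpha_{x^y}^\tau$ lies in $G_{0,\infty}$ and then to pin down which element of $G_{0,\infty}$ it is. The first move is to collect the conjugations in the middle three factors: since $\alpha_x^\tau\,\alpha_y\,\alpha_{x^y}^\tau = \bigl(\alpha_x\,\alpha_y^{\tau^{-1}}\,\alpha_{x^y}\bigr)^\tau$, I would first examine the inner product $P := \alpha_x\,\alpha_y^{\tau^{-1}}\,\alpha_{x^y}$. The definition of the right quasi-inverse gives $x\cdot\alpha_y^{\tau^{-1}} = -x^y$, so tracing $0$ through $P$ yields $0\xmapsto{\alpha_x}x\xmapsto{\alpha_y^{\tau^{-1}}}-x^y\xmapsto{\alpha_{x^y}}0$; hence $P$ fixes $0$ and $P^\tau$ fixes $\infty$. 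Prepending $\alpha_{\prescript{x}{}y}$ and using the defining relation of the left quasi-inverse in the form $\prescript{x}{}y\cdot\alpha_x^\tau = -y$, one checks directly that $L$ fixes both $0$ (via $0\mapsto\prescript{x}{}y\mapsto -y\mapsto 0\mapsto 0$) and $\infty$ (via $\infty\mapsto\infty\mapsto x\tau\mapsto (-x^y)\tau\mapsto\infty$, using $x\tau\,\alpha_y=(-x^y)\tau$ and $(-x^y)\alpha_{x^y}=0$). Thus $L\in G_{0,\infty}$, while the right-hand side $\mu_{\prescript{x}{}y}\mu_y$ is patently a product of two $\mu$-maps and so also lies in $G_{0,\infty}$ (in fact in $H$).

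To identify $L$ exactly, I would recast the target into an equivalent identity that is free of the leading root-group factor. Using $h_w=\tau\mu_w$ and Lemma~\ref{prop:mu}\ref{itm:muform2} to rewrite $\mu_{\prescript{x}{}y}$, the equation $L=\mu_{\prescript{x}{}y}\mu_y$ is equivalent, after multiplying by $\mu_y^{-1}=\mu_{-y}$ and conjugating by $\tau$, to
\[
\alpha_{\prescript{x}{}y}^{\tau^{-1}}\,\alpha_x\,\alpha_y^{\tau^{-1}}\,\alpha_{x^y} \;=\; h_{\prescript{x}{}y}\,h_{-y}^{-1}.
\]
The same point-chasing as above shows the left member again fixes $0$ and $\infty$ (now $0\mapsto x\mapsto -x^y\mapsto 0$ and $\infty\mapsto \prescript{x}{}y\tau^{-1}\mapsto(-y)\tau^{-1}\mapsto\infty\mapsto\infty$, using $\prescript{x}{}y\,\tau^{-1}\alpha_x=(-y)\tau^{-1}$), so both members lie in $G_{0,\infty}$. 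It then remains to prove this reduced identity. I would do so by expanding the left member with Lemma~\ref{prop:mu}\ref{itm:muform2} and the explicit description of $\til{\prescript{x}{}y}$ furnished by Proposition~\ref{prop:sumform} applied to the pair $(-y,\,x\tau)$ (whose non-equivalence is exactly the quasi-invertibility hypothesis $x\tau\nsim -y$), telescoping the result to $h_{\prescript{x}{}y}h_{-y}^{-1}$; alternatively, one may compare conjugation actions on $U_\infty$, since for $g\in G_{0,\infty}$ one has $\alpha_w^{\,g}=\alpha_{wg}$ exactly as in Lemma~\ref{prop:huaAut}, so that matching the action on the units determines the element.

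The main obstacle is precisely this last identification inside $G_{0,\infty}$: fixing $0$ and $\infty$ does not by itself determine an element there, so the equality must be forced by an abstract cancellation — the structural analogue of inverting $1-ab$ in the matrix computation for $\M(R)$, where the lower-left entries annihilate by the defining relations of the two quasi-inverses. The bookkeeping is delicate because of the twist $\til{(\cdot)}$ and because the root groups need not be abelian, so the two $U_\infty$-factors that appear cannot simply be combined. Finally, I would treat the degenerate case $x\sim 0$ (admitted by quasi-invertibility) separately but briefly: there $x$ is not a unit and $x^y\sim 0$, yet $\prescript{x}{}y$ remains a unit (so $\mu_{\prescript{x}{}y}$ is defined), and all the trace computations and the reduced identity go through verbatim, since they rely only on the defining relations of the quasi-inverses and not on $x$ being a unit.
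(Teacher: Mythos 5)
Your preparatory steps are sound: the point-chases showing that $L$ fixes $0$ and $\infty$ are correct, and the conjugated product $\alpha_{\prescript{x}{}y}^{\tau^{-1}}\alpha_x\alpha_y^{\tau^{-1}}\alpha_{x^y}$ is indeed $L^{\tau^{-1}}$, so your ``reduced identity'' is equivalent to the proposition. But the proof stops exactly at the point you yourself flag as ``the main obstacle'': you never produce a mechanism that identifies an element of $G_{0,\infty}$, and neither of your two suggested completions supplies one. Expanding via Lemma~\ref{prop:mu}\ref{itm:muform2} together with Proposition~\ref{prop:sumform} applied to the pair $(-y,x\tau)$ merely replaces one word in root elements and $\mu$-maps by another word that you still have no tool to evaluate inside $G_{0,\infty}$ --- it is the same identification problem, not a solution to it (moreover, that application of Proposition~\ref{prop:sumform} requires $x\tau$ to be a unit, which fails precisely in your ``degenerate'' case $x\sim 0$, since then $x\tau\sim\infty$; so the claim that everything goes through ``verbatim'' there rests on nothing). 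The alternative of matching conjugation actions amounts to showing $wL = w\mu_{\prescript{x}{}y}\mu_y$ for every unit $w$, i.e.\ to computing how a product of two $\mu$-maps moves arbitrary points, which is exactly the kind of information one does not have at this stage. So this is a genuine gap, not an unwritten routine verification.

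The missing idea --- and the paper's actual argument --- is to trade $G_{0,\infty}$, where fixing $0$ and $\infty$ pins down nothing, for the root group $U_\infty$, where \ref{itm:LM2} pins down everything. Rewrite $\mu_{\prescript{x}{}y}$ by Lemma~\ref{prop:mu}\ref{itm:muform2}, observe from the definition of the left quasi-inverse that $\alpha_{-(\prescript{x}{}y\tau^{-1})} = \alpha_x\alpha_{-((-y)\tau^{-1})}$, and rewrite $\mu_y$ by Lemma~\ref{prop:mu}\ref{itm:muform}; the proposition then becomes the single statement $P=\id$ for the explicit product $P = \alpha_{\til\prescript{x}{}y}\,\alpha_{(-y)\tau^{-1}}^\tau\alpha_y\,\alpha_{x^y}^\tau\alpha_{y\tau^{-1}}^\tau\alpha_y^{-1}\alpha_{(-y)\tau^{-1}}^{-\tau}$. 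Now comes the key step: everything after the first factor is the conjugate of $\alpha_{x^y}^\tau\alpha_{y\tau^{-1}}^\tau\in U_0$ by $g:=\alpha_y^{-1}\alpha_{(-y)\tau^{-1}}^{-\tau}$, and $0\cdot g=\infty$, so by \ref{itm:LM3} this conjugate lies in $U_{0\cdot g}=U_\infty$; hence $P\in U_\infty$. A single point-chase of the kind you already performed (using $\til\prescript{x}{}y = x\alpha_{(-y)\tau^{-1}}^{-1}\tau$, which follows directly from the definitions and needs no case distinction on $x$) shows $0\cdot P=0$, and then \ref{itm:LM2} forces $P=\id$. In short: membership in a root group, obtained from \ref{itm:LM3} rather than from point-chasing, is the rigidity your plan lacks; once you have it, ``fixes $0$'' really does determine the element.
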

\begin{proof}
	By the observation above, $\prescript{x}{}y\nsim0$, so the right-hand side is defined. By Lemma~\ref{prop:mu}\ref{itm:muform2}, we have $\mu_{\prescript{x}{}y} = \alpha_{\prescript{x}{}y}\,\alpha_{-(\prescript{x}{}y\tau^{-1})}^\tau\,\alpha_{-\til\prescript{x}{}y}$. Furthermore, by the definition of $\prescript{x}{}y$, we have
	\[-(\prescript{x}{}y\tau^{-1})=-(0\cdot\alpha_{(-y)\tau^{-1}}\alpha_{-x}) = 0\cdot\alpha_x\alpha_{-(-y)\tau^{-1}}\,,\]
	so $\alpha_{-(\prescript{x}{}y\tau^{-1})} = \alpha_x\alpha_{-(-y)\tau^{-1}}$. Plugging these into our equality gives
	\begin{align*}
	\alpha_{\prescript{x}{}y}\,\alpha_x^\tau\,\alpha_y\,\alpha_{x^y}^\tau = \mu_{\prescript{x}{}y}\,\mu_y\quad
		&\Longleftrightarrow\quad \alpha_y\,\alpha_{x^y}^\tau = \alpha_{-(-y)\tau^{-1}}^\tau\,\alpha_{-\til\prescript{x}{}y}\,\mu_y \\
		&\Longleftrightarrow\quad \alpha_{\til\prescript{x}{}y}\alpha_{(-y)\tau^{-1}}^\tau\alpha_y\,\alpha_{x^y}^\tau= \mu_y\,.
	\end{align*}
	Now we use Lemma~\ref{prop:mu}\ref{itm:muform} to find $\mu_y = \alpha_{(-y)\tau^{-1}}^\tau\alpha_y\alpha_{-(y\tau^{-1})}^\tau$, which yields
	\begin{align}
	\alpha_{\prescript{x}{}y}\,\alpha_x^\tau\,\alpha_y\,\alpha_{x^y}^\tau = \mu_{\prescript{x}{}y}\,\mu_y\quad
		&\Longleftrightarrow\quad \alpha_{\til\prescript{x}{}y}\alpha_{(-y)\tau^{-1}}^\tau\alpha_y\,\alpha_{x^y}^\tau= \alpha_{(-y)\tau^{-1}}^\tau\alpha_y\alpha_{-(y\tau^{-1})}^\tau\quad \notag \\
	&\Longleftrightarrow\quad \alpha_{\til\prescript{x}{}y}\alpha_{(-y)\tau^{-1}}^\tau\alpha_y\,\alpha_{x^y}^\tau\alpha_{y\tau^{-1}}^\tau\alpha_y^{-1}\alpha_{(-y)\tau^{-1}}^{-\tau} = \id \,. \label{eq:long}
	\end{align}
	Now we have $\alpha_{(-y)\tau^{-1}}^\tau\alpha_y\,\alpha_{x^y}^\tau\alpha_{y\tau^{-1}}^\tau\alpha_y^{-1}\alpha_{(-y)\tau^{-1}}^{-\tau}\in U_0^{\alpha_y^{-1}\alpha_{(-y)\tau^{-1}}^{-\tau}} = U_{0\cdot\alpha_y^{-1}\alpha_{(-y)\tau^{-1}}^{-\tau}}$ by \ref{itm:LM3}, and
	\[0\cdot\alpha_y^{-1}\alpha_{(-y)\tau^{-1}}^{-\tau} = (-y)\cdot\tau^{-1}\alpha_{-(-y)\tau^{-1}}\tau
			= (-y)\tau^{-1}\cdot\alpha_{-(-y)\tau^{-1}}\tau = 0\tau = \infty\,,\]
	so the left-hand side of~\eqref{eq:long} is an element of $U_\infty$. To prove that it is the identity, it is now sufficient to prove that it maps $0$ to $0$, by \ref{itm:LM2}. Note first that $\til\prescript{x}{}y = x\alpha_{(-y)\tau^{-1}}^{-1}\tau$. We have
	\begin{multline*}
		0\cdot\alpha_{\til\prescript{x}{}y}\alpha_{(-y)\tau^{-1}}^\tau\alpha_y\,\alpha_{x^y}^\tau\alpha_{y\tau^{-1}}^\tau\alpha_y^{-1}\alpha_{(-y)\tau^{-1}}^{-\tau} \\
        \begin{aligned}
            &= x\alpha_{(-y)\tau^{-1}}^{-1}\tau\tau^{-1}\alpha_{(-y)\tau^{-1}}\tau\alpha_y\,\alpha_{x^y}^\tau\alpha_{y\tau^{-1}}^\tau\alpha_y^{-1}\alpha_{(-y)\tau^{-1}}^{-\tau} \\
            &= x\cdot\alpha_y^{\tau^{-1}}\alpha_{x^y}\tau\alpha_{y\tau^{-1}}^\tau\alpha_y^{-1}\alpha_{(-y)\tau^{-1}}^{-\tau} = (-x^y)\cdot\alpha_{x^y}\tau\alpha_{y\tau^{-1}}^\tau\alpha_y^{-1}\alpha_{(-y)\tau^{-1}}^{-\tau} \\
            &= 0\cdot\alpha_{y\tau^{-1}}\tau\alpha_{-y}\alpha_{-(-y)\tau^{-1}}^\tau = y\cdot\alpha_{-y}\alpha_{-(-y)\tau^{-1}}^\tau \\
            &= 0\cdot\alpha_{-(-y)\tau^{-1}}^\tau = 0\,,
        \end{aligned}
	\end{multline*}
	and this finishes the proof.
\end{proof}

\begin{remark}
	In a similar fashion, one can show that if $(x,y)$ is quasi-invertible with $x\nsim 0$, then
	\[\alpha_{\prescript{x}{}y}\,\alpha_x^\tau\,\alpha_y\,\alpha_{x^y}^\tau = \mu_{x}\,\mu_{x^y}\,.\]
\end{remark}

\subsection{Bruhat decomposition of \texorpdfstring{$G$}{G}}

By refining the argument in Proposition~\ref{prop:twotrans}, we will be able to obtain decompositions of the little projective group $G$ which resemble
the Bruhat decomposition. This is based on a case distinction depending on where the pair $(0,\infty)$ is mapped to by a given element.

\begin{proposition}
	The little projective group $G$ can be decomposed into a disjoint union
	\[G = U_0 G_{0,\infty} U_\infty \,\cup\, U_0 G_{0,\infty} \tau U_0^\circ\,.\]
	Moreover, the decomposition of an element is unique in each of the two cases.
\end{proposition}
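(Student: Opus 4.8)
The plan is to let a single invariant drive the whole argument, namely whether or not $0g\sim\infty$. I claim that the first cell $U_0 G_{0,\infty} U_\infty$ is exactly $\{g\in G\mid 0g\nsim\infty\}$ and the second cell $U_0 G_{0,\infty}\tau U_0^\circ$ is exactly $\{g\in G\mid 0g\sim\infty\}$; since these two conditions are mutually exclusive and exhaustive, this gives simultaneously the disjointness and the fact that the union is all of $G$. The inclusions $\subseteq$ are immediate: for $g=u_0 h u_\infty$ one has $0g=0u_\infty\nsim\infty$, because $u_\infty$ preserves $X\setminus\overline{\infty}$ and $0\nsim\infty$; for $g=u_0 h\tau u_0'$ one has $0g=\infty u_0'\sim\infty$, because $u_0'\in U_0^\circ$ fixes the class $\overline{\infty}$. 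The real content is the reverse inclusions (existence) together with uniqueness.

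Before that I would isolate two consequences of \ref{itm:LM2}. \emph{(a)} The only element of $U_0$ fixing $\infty$ is $\id$, since $\infty\in X\setminus\overline{0}$ and $U_0$ is sharply transitive there; hence $U_0\cap G_{0,\infty}=\{\id\}$. \emph{(b)} For every class $\overline{c}\neq\overline{0}$, the subgroup $U_0^\circ$ acts sharply transitively on $\overline{c}$: given $p,q\in\overline{c}$, take the unique $u\in U_0$ with $pu=q$; then $\overline{u}\in U_{\overline{0}}$ fixes the point $\overline{c}\neq\overline{0}$ of the Moufang set $(\overline{X},\{U_{\overline{x}}\})$, so $\overline{u}=\id$ by sharp transitivity there, i.e.\ $u\in U_0^\circ$. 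In particular $U_0^\circ$ is sharply transitive on $\overline{\infty}$.

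For existence, suppose first $0g\nsim\infty$. By \ref{itm:LM2} there is a unique $u_\infty\in U_\infty$ with $0u_\infty=0g$, so $g':=g u_\infty^{-1}$ fixes $0$; since a permutation fixing $0$ and preserving $\sim$ stabilizes $\overline{0}$ and hence $X\setminus\overline{0}$, we get $\infty (g')^{-1}\nsim 0$, and \ref{itm:LM2} furnishes the unique $u_0\in U_0$ with $\infty u_0^{-1}=\infty (g')^{-1}$. Then $u_0^{-1}g'$ fixes both $0$ and $\infty$, so $g=u_0\,(u_0^{-1}g')\,u_\infty\in U_0 G_{0,\infty}U_\infty$. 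If instead $0g\sim\infty$, use \emph{(b)} to pick the unique $u_0'\in U_0^\circ$ with $\infty u_0'=0g$; then $g'':=g(u_0')^{-1}\tau^{-1}$ sends $0$ to $(0g)(u_0')^{-1}\tau^{-1}=\infty\tau^{-1}=0$, and the identical step applied to $g''$ produces $u_0\in U_0$ and $h\in G_{0,\infty}$ with $g=u_0 h\tau u_0'$.

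Uniqueness is then the same sharp-transitivity bookkeeping: in the first cell, evaluating at $0$ forces $u_\infty=\tilde u_\infty$, after which $\tilde u_0^{-1}u_0=\tilde h h^{-1}\in U_0\cap G_{0,\infty}=\{\id\}$ by \emph{(a)}, giving $u_0=\tilde u_0$ and $h=\tilde h$; in the second cell, evaluating at $0$ forces $u_0'=\tilde u_0'$ by \emph{(b)}, and cancelling $u_0'$ and $\tau$ reduces to the first case. The step I expect to be the crux is \emph{(b)}: the reason the small cell carries the factor $U_0^\circ$ rather than all of $U_0$ is precisely that the constraint $0g\sim\infty$ confines $0g$ to the single class $\overline{\infty}$, on which $U_0^\circ$ — and only $U_0^\circ$ — acts sharply transitively. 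This is what makes the small-cell decomposition both possible and unique; replacing $U_0^\circ$ by $U_0$ would destroy uniqueness and spill outside the invariant $\{0g\sim\infty\}$. Everything else is routine application of \ref{itm:LM2}.
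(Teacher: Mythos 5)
Your proof is correct and takes essentially the same route as the paper: the same case distinction on whether $0g\sim\infty$, the same sharp-transitivity bookkeeping via \ref{itm:LM2}, and the same use of \ref{itm:LM2'} to see that the correcting element in the $0g\sim\infty$ case lies in $U_0^\circ$ (your lemma \emph{(b)} is exactly the paper's observation that an element of $U_0$ fixing $\overline{\infty}$ induces the identity on $\overline{X}$). Your treatment of disjointness and uniqueness is somewhat more explicit than the paper's, but the underlying argument is identical.
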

\begin{proof}
	Let $g$ be in $G$ and let $(x,y) = (0,\infty)\cdot g$. We distinguish two mutually exclusive cases:
	\begin{itemize}[leftmargin=1.5cm, labelwidth=4ex, itemsep=1ex]
		\item[{$x\sim \infty$}:] By \ref{itm:LM2}, there is a unique $u_0\in U_0$ such that $x\cdot u_0 = \infty$. Since $u_0$ fixes $\overline{\infty}$, we have $u_0\in U_0^\circ$. We get $(x,y)\cdot u_0\tau^{-1} = (0,y')$ for some $y'\nsim 0$. Hence there is a unique $u'_0\in U_0$ such that $y'\cdot u'_0 = \infty$, so we get
		\[(0,\infty)\cdot gu_0\tau^{-1}u'_0 = (x,y)\cdot u_0\tau^{-1}u'_0 = (0,y')\cdot u'_0 = (0,\infty)\,,\]
		so $gu_0\tau^{-1}u'_0=h \in G_{0,\infty}$. Hence $g = u'^{-h^{-1}}_0h\tau u_0^{-1} \in U_0 G_{0,\infty}\tau U_0^\circ$. Note that since $u_0$ and $u'_0$ are unique, so is $h$, and hence the entire decomposition.
		\item[{$x\nsim \infty$}:] In this case we find a unique $u_\infty\in U_\infty$ such that $x\cdot u_\infty = 0$, so $(x,y)\cdot u_\infty = (0,y')$. Next, we find a unique $u_0\in U_0$ such that $y'\cdot u_0 = \infty$, so
		\[(0,\infty)\cdot g u_\infty u_0 = (x,y)\cdot u_\infty u_0 = (0,y')\cdot u_0 = (0,\infty)\,.\]
		This means $g u_\infty u_0 = h\in G_{0,\infty}$, so $g = u_0^{-h^{-1}} h u^{-1}_\infty\in U_0 G_{0,\infty} U_\infty$. Again, since $u_0$ and $u_\infty$ are unique, so is $h$ and the entire decomposition. \qedhere
	\end{itemize}
\end{proof}

By making similar case distinctions, one can get different decompositions, for example
\[ G = U_\infty G_{0,\infty} U_0^\circ \,\cup\, U_\infty G_{0,\infty} \tau U_\infty\,.\]
For this decomposition, we would separate two cases: $\infty\cdot g \sim\infty$ or $\infty\cdot g \nsim\infty$. In particular, we can check that if we take $g\in U_0^\circ U_\infty$, we always end up in the first component $U_\infty G_{0,\infty} U_0^\circ$, i.e.\@
\begin{equation}\label{eq:UoU}
    U_0^\circ U_\infty \subset U_\infty G_{0,\infty} U_0^\circ\,.
\end{equation}
In section~\ref{ss:hua} below, we will show that $G_{0,\infty} = H$.
A first step towards this consists of showing that the inclusion~\eqref{eq:UoU} holds with $G_{0,\infty}$ replaced by $H$.
To obtain this, we will need the notion of quasi-invertibility we introduced.

\begin{proposition}\label{prop:huaincl}
	In a local Moufang set, we have $U_0^\circ U_\infty \subset U_\infty H U_0^\circ$.
\end{proposition}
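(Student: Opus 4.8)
The plan is to unwind everything into the standard coordinates and let the quasi-invertibility identity of Proposition~\ref{prop:quainv} carry the argument. Recall that $U_\infty=\{\alpha_y\mid y\nsim\infty\}$ splits as $U_\infty^\circ\sqcup U_\infty^\times$, with $U_\infty^\circ=\{\alpha_y\mid y\sim0\}$ and $U_\infty^\times=\{\alpha_y\mid y\text{ a unit}\}$, while $U_0^\circ=\{\alpha_x^\tau\mid x\sim0\}$. So the claim reduces to showing that $\alpha_x^\tau\alpha_y\in U_\infty HU_0^\circ$ for every $x\sim0$ and every $y\nsim\infty$, and I would split this into the two cases $y$ a unit and $y\sim0$.

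For the main case, where $y$ is a unit, the point is that $x\sim0$ makes the pair $(x,y)$ automatically quasi-invertible, and $y\nsim0$, so Proposition~\ref{prop:quainv} applies and gives
\[
\alpha_x^\tau\alpha_y=\alpha_{-\prescript{x}{}y}\,\bigl(\mu_{\prescript{x}{}y}\mu_y\bigr)\,\alpha_{-x^y}^\tau .
\]
Here $\alpha_{-\prescript{x}{}y}\in U_\infty$ (quasi-invertibility guarantees $\prescript{x}{}y\nsim\infty$, and $y\nsim0$ forces $\prescript{x}{}y\nsim0$, so $\mu_{\prescript{x}{}y}$ is even defined), while $\mu_{\prescript{x}{}y}\mu_y\in H$ is a defining generator of the Hua subgroup. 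Finally, since $x\sim0$ forces $x^y\sim0$ and hence $-x^y\sim0$, the factor $\alpha_{-x^y}^\tau$ lands in $U_0^\circ$. This already yields $U_0^\circ U_\infty^\times\subset U_\infty HU_0^\circ$.

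The genuinely delicate case, and the main obstacle, is $y\sim0$, i.e.\ $\alpha_y\in U_\infty^\circ$, where Proposition~\ref{prop:quainv} does not apply because its hypothesis $y\nsim0$ fails. The trick I would use is to absorb a unit: pick any unit $w$ (which exists since $\abs{\overline X}>2$) and set $y':=y\alpha_{-w}$. Then $\alpha_{y'}\alpha_w=\alpha_y$, since both sides lie in $U_\infty$ and send $0$ to $y$, and $y'$ is a unit because $\overline{y'}=\overline0\cdot\overline{\alpha_{-w}}=\overline{-w}\notin\{\overline0,\overline\infty\}$. Thus $\alpha_x^\tau\alpha_y=\alpha_x^\tau\alpha_{y'}\alpha_w$; applying the main case to $\alpha_x^\tau\alpha_{y'}$ places this in $U_\infty HU_0^\circ\alpha_w$, and since $\alpha_w\in U_\infty^\times$ the tail $U_0^\circ\alpha_w$ is again of the form handled by the main case, landing us in $U_\infty HU_\infty HU_0^\circ$.

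It then only remains to collapse this product. Because the Hua maps normalise $U_\infty$ (Lemma~\ref{prop:huaAut}, applied along the generators of $H$), we have $HU_\infty=U_\infty H$, and therefore
\[
U_\infty HU_\infty HU_0^\circ=U_\infty(U_\infty H)HU_0^\circ=U_\infty HU_0^\circ ,
\]
which closes the degenerate case and hence the proposition. The only new input beyond Proposition~\ref{prop:quainv} is this unit-absorption step together with the normalisation $HU_\infty=U_\infty H$; everything else is bookkeeping with the equivalences $x\sim0\iff x^y\sim0$ and $y\nsim0\iff\prescript{x}{}y\nsim0$ recorded just after the definition of the quasi-inverses.
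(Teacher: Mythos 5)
Your proposal is correct and follows essentially the same route as the paper's own proof: the unit case via Proposition~\ref{prop:quainv} giving $\alpha_x^\tau\alpha_y=\alpha_{-\prescript{x}{}y}\,\mu_{\prescript{x}{}y}\mu_y\,\alpha_{-x^y}^\tau$, the splitting $\alpha_y=\alpha_{y'}\alpha_w$ into two units to handle $\alpha_y\in U_\infty^\circ$, and the collapse of $U_\infty H U_\infty H U_0^\circ$ using the fact that $H$ normalizes $U_\infty$ (which, in a local Moufang set, also follows directly from \ref{itm:LM3} since $H\leq G_{0,\infty}$). No gaps; the bookkeeping with $x\sim0\iff x^y\sim0$ and $y\nsim0\iff\prescript{x}{}y\nsim0$ matches the paper exactly.
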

\begin{proof}
	We will show this in two steps. First, we will prove that
    \begin{equation}\label{eq:half}
        U_0^\circ U_\infty^\times \subset U_\infty^\times H U_0^\circ\,,
    \end{equation}
    and from that we will deduce the general inclusion.

	So take an arbitrary element of $U_0^\circ U_\infty^\times$, and denote it by $\alpha_x^\tau\alpha_y$. Then $x\sim0$ and $\infty\nsim y\nsim0$, so $(x,y)$ is quasi-invertible. By Proposition~\ref{prop:quainv}, we have
	\[\alpha_x^\tau\alpha_y = \alpha_{-\prescript{x}{}y}\,\mu_{\prescript{x}{}y}\,\mu_y\,\alpha_{-x^y}^\tau \in U^\times_\infty H U_0^\circ\,,\]
	since $0\nsim-\prescript{x}{}y\nsim\infty$ and $-x^y\sim 0$. This proves~\eqref{eq:half}.

	For the general inclusion, we use the fact that if we have $\alpha_y\in U_\infty^\circ$, we can split it up as $\alpha_y = \alpha_{y'}\alpha_{e}$, for units $y'$ and $e$;
    indeed, units exist, and if $e$ is a unit, then $\alpha_y\alpha_e^{-1}$ does not fix $\overline{0}$, so $y'$ is also a unit. So we get
	\[U_0^\circ U_\infty^\circ\subset U_0^\circ U_\infty^\times U_\infty^\times\subset U_\infty^\times H U_0^\circ U_\infty^\times \subset  U_\infty^\times H  U_\infty^\times H U_0^\circ = U_\infty^\times U_\infty^\times H U_0^\circ\subset U_\infty H U_0^\circ\,,\]
	where we have used~\eqref{eq:half} twice, as well as the fact that $HU_\infty^\times = U_\infty^\times H$.

	Putting these two inclusions together, we get
	\[U_0^\circ U_\infty = U_0^\circ U_\infty^\times\cup U_0^\circ U_\infty^\circ \subset U_\infty^\times H U_0^\circ\cup U_\infty H U_0^\circ = U_\infty H U_0^\circ\,.\qedhere\]
\end{proof}

\subsection{The Hua subgroup is \texorpdfstring{$G_{0,\infty}$}{the two-point stabilizer}}\label{ss:hua}

In the case of Moufang sets, one can use the Bruhat decomposition to prove that $G_{0,\infty} = H$, and as a consequence that the point stabilizer $G_0 = U_0 H$.
In the case of local Moufang sets, the additional $U_0^\circ$ in the decomposition seems to cause further difficulties in the proof.
However, using Proposition~\ref{prop:huaincl}, we will be able to resolve these difficulties,
and we will again be able to prove that the Hua subgroup coincides with the full $2$-point stabilizer of $0$ and $\infty$ in $G$.

\begin{theorem}\label{thm:hua2pt}
	For a local Moufang set $\M$, we have the decomposition $G = U_0 H U_\infty \,\cup\, U_0 H \tau U_0^\circ$, and hence $G_0 = U_0 H$ and $H = G_{0,\infty}$.
\end{theorem}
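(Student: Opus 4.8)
The plan is to prove the displayed decomposition first, writing $S := U_0 H U_\infty\cup U_0 H\tau U_0^\circ$; the two group-theoretic equalities will then drop out by tracking where $0$ and $\infty$ are sent. Since $H\le G_{0,\infty}$, the set $S$ is contained in the disjoint decomposition $G=U_0 G_{0,\infty}U_\infty\cup U_0 G_{0,\infty}\tau U_0^\circ$ obtained above, so $S\subseteq G$; as $G=\langle U_0,U_\infty\rangle$ by Proposition~\ref{pr:UxUy} and $\id\in S$, it suffices to show that $S$ is stable under right multiplication by $U_0$ and by $U_\infty$. Throughout I will freely use the following bookkeeping facts, all immediate from \ref{itm:LM3} and the choice of $\tau$: the group $H$ normalizes $U_0$, $U_\infty$, $U_0^\circ$ and $U_\infty^\circ$ (as $H$ fixes $0$ and $\infty$); conjugation by $\tau$ carries $\mu$-maps to $\mu$-maps by Lemma~\ref{prop:mu}\ref{itm:mutau}, hence $\tau$ normalizes $H$; we have $U_0\tau=\tau U_\infty$ and $U_\infty\tau=\tau U_0$, with $(U_0^\circ)^\tau=U_\infty^\circ$ and $(U_0^\times)^\tau=U_\infty^\times$; and finally $\tau^2\in H$, together with $\mu_x\in H\tau$ for every unit $x$ (because $\tau\mu_x=h_x\in H$).

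Stability under $U_\infty$ is short. Clearly $U_0 H U_\infty\cdot U_\infty=U_0 H U_\infty$. For the other piece, Proposition~\ref{prop:huaincl} gives $U_0^\circ U_\infty\subseteq U_\infty H U_0^\circ$, so
\[ U_0 H\tau U_0^\circ U_\infty\subseteq U_0 H\tau U_\infty H U_0^\circ=U_0 H U_0\tau H U_0^\circ=U_0 H\tau U_0^\circ, \]
using $\tau U_\infty=U_0\tau$, that $H$ normalizes $U_0$, and that $\tau$ normalizes $H$. Thus $S U_\infty\subseteq S$.

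The heart of the proof is stability under $U_0$, for which I isolate the key claim $U_\infty^\times\tau\subseteq U_0 H U_\infty$. To see it, fix a unit $x$ and write $\mu_x=g\alpha_x h$ with $g,h\in U_0$ as in Proposition~\ref{pr:mu}, so that $\alpha_x\tau=g^{-1}\mu_x h^{-1}\tau$; writing $h^{-1}\tau=\tau u_\infty$ with $u_\infty\in U_\infty$ (as $U_0\tau=\tau U_\infty$) gives $\alpha_x\tau=g^{-1}(\mu_x\tau)u_\infty$, and $\mu_x\tau\in H\tau^2=H$, whence $\alpha_x\tau\in U_0 H U_\infty$. Granting this, the second cell is handled by $U_0 H\tau U_0^\circ\cdot U_0=U_0 H\tau U_0=U_0 H\tau U_0^\circ\cup U_0 H\tau U_0^\times$, where the last term equals $U_0 H U_\infty^\times\tau\subseteq U_0 H U_\infty$ by the key claim. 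For the first cell $U_0 H U_\infty\cdot U_0$ I split $U_\infty U_0=U_\infty^\circ U_0\cup U_\infty^\times U_0$: conjugating Proposition~\ref{prop:huaincl} by $\tau$ yields $U_\infty^\circ U_0\subseteq U_0 H U_\infty^\circ$, which collapses into $U_0 H U_\infty$, while $U_\infty^\times\subseteq U_0 H\tau U_0$ (again from $\mu_x=g\alpha_x h$ and $\mu_x\in H\tau$) reduces $U_\infty^\times U_0$ to the second-cell computation just done. Hence $S U_0\subseteq S$, and therefore $G=S$.

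Finally, the two equalities follow by inspection. An element of $U_0 H\tau U_0^\circ$ sends $0$ to $\infty v$ with $v\in U_0^\circ\subseteq U_0$, and $\infty v\nsim 0$, so it cannot fix $0$; an element $u_0 h u_\infty\in U_0 H U_\infty$ satisfies $0\cdot u_0 h u_\infty=0 u_\infty$, which equals $0$ exactly when $u_\infty=\id$ by sharp transitivity \ref{itm:LM2}. Thus $G_0=U_0 H$. For $g\in G_{0,\infty}$, write $g=u_0 h$; fixing $\infty$ forces $\infty u_0=\infty$, hence $u_0=\id$ by sharp transitivity of $U_0$ on $X\setminus\overline 0\ni\infty$, so $g=h\in H$ and $H=G_{0,\infty}$. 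The main obstacle is the key claim $U_\infty^\times\tau\subseteq U_0 H U_\infty$: this is the one place where Proposition~\ref{prop:huaincl} alone does not suffice and one must exploit that $\tau$ is itself a $\mu$-map, so that $\tau\mu_x\in H$ absorbs the stray $\mu$-map into the Hua subgroup; everything else is coset bookkeeping with the normalization relations.
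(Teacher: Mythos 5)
Your proof is correct and takes essentially the same route as the paper: both arguments show that the set $U_0 H U_\infty \cup U_0 H \tau U_0^\circ$ absorbs right multiplication by a generating set of $G$, with Proposition~\ref{prop:huaincl} handling the crucial case and the decomposition $\mu_x = g\alpha_x h$ of Proposition~\ref{pr:mu} (equivalently, Lemma~\ref{prop:mu}) used to absorb stray $\mu$-maps into $H$, after which the identifications $G_0 = U_0H$ and $G_{0,\infty}=H$ follow by tracking the images of $0$ and $\infty$. The only difference is that you verify stability under the generators $U_0$ and $U_\infty$ where the paper uses $U_\infty$ and $\tau$, which costs you a slightly longer case analysis and the $\tau$-conjugate of Proposition~\ref{prop:huaincl}, but changes nothing essential.
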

\begin{proof}
    Let $K = U_0 H = HU_0$. We will examine the set $Q = K U_\infty\cup K\tau U_0^\circ$; our aim is to prove that it equals $G$.
    More precisely, we will show that $Q\langle U_\infty,\tau\rangle = QG\subset Q$, from which $Q=G$ will follow immediately.
    We will do this for each of the two pieces of $Q$ separately.\vspace{1ex}
    \begin{itemize}
        \item
            We will first show that $K U_\infty G\subset Q$.
            It is immediate that $K U_\infty U_\infty = K U_\infty\subset Q$, so all we need to prove is that $K U_\infty\tau\subset Q$, or equivalently, that $K \alpha_a\tau\subset Q$ for all $\alpha_a\in U_\infty$. Assume first that $\alpha_a\in U_\infty^\circ$; then $\alpha_a^\tau\in U_0^\circ$, so
            \[K \alpha_a\tau = K \tau\alpha_a^\tau\subset K\tau U_0^\circ\subset Q\,.\]
            Finally, when $\alpha_a\in U_\infty\backslash U_\infty^\circ = U_\infty^\times$, we have $\mu_{a\tau} \alpha_{a}^\tau = \alpha_{(-a\tau)\tau^{-1}}^\tau\,\alpha_{a\tau}$ by Lemma~\ref{prop:mu}\ref{itm:muform}. Since $\mu_{a\tau}\tau^{-1}\in H\subset K$, this implies
            \[K \alpha_a\tau = K \tau\alpha_a^\tau = K\mu_{a\tau}\tau^{-1}\tau\alpha_a^\tau = K\alpha_{(-a\tau)\tau^{-1}}^\tau\,\alpha_{a\tau}\subset K U_\infty\,.\]
        \item
            We will now show that $K \tau U_0^\circ G\subset Q$. We have $K \tau U_0^\circ \tau = K \tau^2 U_\infty^\circ\subset KHU_\infty = K U_\infty\subset Q$, so we need to prove that $K \tau U_0^\circ U_\infty\subset Q$.
            We now invoke Proposition~\ref{prop:huaincl}, and we get
            \[K\tau U_0^\circ U_\infty\subset K\tau U_\infty H U_0^\circ = KU_0H\tau U_0^\circ = K\tau U_0^\circ\subset Q.\]
    \end{itemize}
	We conclude that $QG\subset Q$, and hence $Q=G$ as claimed.

	We now know that $G = U_0 H U_\infty\cup U_0 H\tau U_0^\circ$. If we take a $g$ in the point stabilizer $G_0$ and look at the two possibilities of decomposing $g$, we get $g\in U_0 H$, so $G_0 = U_0 H$. If we assume in addition that $g$~fixes $\infty$, we also see that the factor in $U_0$ must be trivial, hence $G_{0,\infty} = H$.
\end{proof}

\section{Constructing local Moufang sets}\label{se:4}

\subsection{The construction \texorpdfstring{$\M(U,\tau)$}{M(U,tau)}}

We already know that, if we have a local Moufang set, then $G = \langle U_\infty, \tau\rangle$. We now do the converse: given a group $U$ and a permutation $\tau$, both acting faithfully on a set with an equivalence relation, we will try to construct a local Moufang set.
Of course, we will need additional conditions on $U$ and $\tau$.

\begin{construction}\label{constr:MUtau}
	The construction requires some data to start with. We need
	\begin{itemize}
		\item a set with an equivalence relation $(X,\lsim)$, such that $\abs{\overline{X}}>2$;
		\item a group $U\leq\Sym(X,\lsim)$, and an element $\tau\in\Sym(X,\lsim)$.
	\end{itemize}
	The action of $U$ and $\tau$ will have to be sufficiently nice in order to do the construction.
	\begin{enumerate}[label=\textnormal{(C\arabic*)},leftmargin=1cm]
		\item $U$ has a fixed point we call $\infty$, and acts sharply transitively on $X\setminus\overline{\infty}$\label{itm:C1}.
		\item[\mylabel{itm:C1'}{\textnormal{(C1')}}] The induced action of $U$ on $\overline{X}$ is sharply transitive on $\overline{X}\setminus\{\overline{\infty}\}$.
		\item $\infty\tau\nsim\infty$ and $\infty\tau^2=\infty$. We write $0:=\infty\tau$\label{itm:C2}.
	\end{enumerate}
	In this construction, we now define the following objects:
	\begin{itemize}
		\item For $x\nsim\infty$, we let $\alpha_x$ be the unique element of $U$ mapping $0$ to $x$ (by \ref{itm:C1} and \ref{itm:C2}).
		\item For $x\nsim\infty$, we write $\gamma_x:=\alpha_x^\tau$, which then maps $\infty$ to $x\tau$.
		\item We set $U_\infty := U$ and $U_0:=U_\infty^\tau$. The other root groups are defined as
		\[U_x:=U_0^{\alpha_x} \ \text{ for $x\nsim\infty$},\qquad U_x:=U_\infty^{\gamma_{x\tau^{-1}}} \ \text{for $x\sim\infty$}.\]
		\item As in the definition of local Moufang sets, we write $U_{\overline{x}}$ for the induced action of $U_x$ on~$\overline{X}$.
	\end{itemize}
	This gives us all the data that is needed for a local Moufang set; we denote the result of this construction by $\M(U,\tau)$.
    Our goal in this section is to investigate when this is a local Moufang set.
    This will require some additional definitions, which we have seen before for local Moufang sets, but which we need to redefine in the current setup:
	\begin{itemize}
		\item We call $x\in X$ a unit if $x\nsim0$ and $x\nsim\infty$.
		\item For $x\nsim\infty$, we set $-x:=0\alpha_x^{-1}$.
		\item For a unit $x$, we define the $\mu$-map $\mu_x:=\gamma_{(-x)\tau^{-1}}\alpha_x\gamma_{-(x\tau^{-1})}$.
		\item For a unit $x$, we define the Hua map $h_x:=\tau\alpha_x\tau^{-1}\alpha_{-(x\tau^{-1})}\tau\alpha_{-(-(x\tau^{-1}))\tau}$.
		\item We set $H := \langle \mu_x\mu_y\mid x,y\text{ units}\rangle$.
		\item We set $U_0^\circ = \{\gamma_x\in U_0\mid x\sim0\}$.
	\end{itemize}
\end{construction}

This construction automatically satisfies some of the axioms of a local Moufang set.

\begin{proposition}\label{pr:constr}
	The construction $\M(U,\tau)$ satisfies \ref{itm:LM1}, \ref{itm:LM2} and \ref{itm:LM2'}.
\end{proposition}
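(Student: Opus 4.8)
The whole proposition follows from \ref{itm:C1} and \ref{itm:C1'} by pure conjugation bookkeeping, once one observes that each root group is, by construction, a conjugate of $U_\infty$ (for points $\sim\infty$) or of $U_0=U_\infty^\tau$ (for points $\nsim\infty$) by an equivalence-preserving permutation that sends the relevant fixed point onto the new base point. First I would record the consequences of \ref{itm:C2}: from $\infty\tau^2=\infty$ one gets $\infty\tau^{-1}=\infty\tau=0$, so $\gamma_z=\alpha_z^\tau$ sends $\infty$ to $z\tau$; and since $\tau\in\Sym(X,\lsim)$ sends $\overline{\infty}$ to $\overline{0}$, conjugation by $\tau$ carries $X\setminus\overline{\infty}$ onto $X\setminus\overline{0}$. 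These are the only facts needed to move sharp transitivity around, and I would prove the axioms in the logical order \ref{itm:LM2}, \ref{itm:LM2'}, \ref{itm:LM1}.

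For \ref{itm:LM2} I would argue entirely by conjugation. The group $U_0=U_\infty^\tau$ fixes $0=\infty\tau$ and, since $\tau$ preserves $\lsim$, acts sharply transitively on $(X\setminus\overline{\infty})\tau=X\setminus\overline{0}$. For $x\nsim\infty$ the element $\alpha_x\in U_\infty$ preserves $\lsim$ and maps $0$ to $x$, so $U_x=U_0^{\alpha_x}$ fixes $x$ and acts sharply transitively on $(X\setminus\overline{0})\alpha_x=X\setminus\overline{x}$; for $x\sim\infty$ the element $\gamma_{x\tau^{-1}}\in U_0$ maps $\infty$ to $x$ and preserves $\lsim$, so $U_x=U_\infty^{\gamma_{x\tau^{-1}}}$ fixes $x$ and acts sharply transitively on $X\setminus\overline{x}=X\setminus\overline{\infty}$. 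Both the fixed point and the transitive domain survive precisely because the conjugating permutation lies in $\Sym(X,\lsim)$, which is the whole reason for conjugating by $\alpha_x$ and $\gamma_{x\tau^{-1}}$ rather than by arbitrary permutations.

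For \ref{itm:LM2'} I would apply the homomorphism $g\mapsto\overline{g}$ to these same conjugation relations, obtaining $\overline{U_x}$ as a conjugate of $\overline{U_0}=\overline{U_\infty}^{\,\overline{\tau}}$ or of $\overline{U_\infty}$; condition \ref{itm:C1'} says $\overline{U_\infty}$ fixes $\overline{\infty}$ and acts sharply transitively on $\overline{X}\setminus\{\overline{\infty}\}$, and conjugating by $\overline{\alpha_x}$ (which sends $\overline{0}$ to $\overline{x}$) or by $\overline{\gamma_{x\tau^{-1}}}$ transports this to $\overline{x}$. Finally, for \ref{itm:LM1} I would show $\overline{U_x}$ depends only on $\overline{x}$, splitting on whether the class is $\overline{\infty}$ or not. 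If $x\sim y$ with $x,y\nsim\infty$, then $\overline{\alpha_x\alpha_y^{-1}}\in\overline{U_\infty}$ fixes $\overline{0}$ (both $\overline{\alpha_x}$ and $\overline{\alpha_y}$ send $\overline{0}$ to $\overline{x}=\overline{y}$); being a point-stabilizing element of the regular group $\overline{U_\infty}$ on $\overline{X}\setminus\{\overline{\infty}\}$, it must act as $\id$ on $\overline{X}$, whence $\overline{\alpha_x}=\overline{\alpha_y}$ and $\overline{U_x}=\overline{U_y}$. If instead $x\sim y\sim\infty$, then $\overline{\gamma_{x\tau^{-1}}},\overline{\gamma_{y\tau^{-1}}}\in\overline{U_0}$ both fix $\overline{\infty}$, so by the same regularity argument for $\overline{U_0}$ on $\overline{X}\setminus\{\overline{0}\}$ both equal $\id$, giving $\overline{U_x}=\overline{U_\infty}=\overline{U_y}$.

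The only genuinely delicate point is \ref{itm:LM1}, i.e.\ the well-definedness of $U_{\overline{x}}$; everything else is conjugation bookkeeping transported through $g\mapsto\overline{g}$. The key idea there is that the ``ratio'' of the two conjugators induces a permutation of $\overline{X}$ that lies in a \emph{regular} induced group and fixes a point of its transitive domain, and is therefore trivial on $\overline{X}$. This is exactly where \ref{itm:C1'} — sharp transitivity of the \emph{induced} action, not merely of the action on $X$ — is indispensable: without it the induced conjugator need not be trivial, and the root groups of equivalent points could genuinely differ on $\overline{X}$.
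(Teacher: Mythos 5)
Your proof is correct and takes essentially the same approach as the paper: \ref{itm:LM2} and \ref{itm:LM2'} are obtained by transporting the sharp transitivity of \ref{itm:C1} and \ref{itm:C1'} along equivalence-preserving conjugators sending $\infty$ to the new base point, and \ref{itm:LM1} follows because the relevant induced permutations on $\overline{X}$ fix a point of a sharply transitive induced action and are therefore trivial. The only cosmetic difference is that in the case $x\sim y\sim\infty$ you show each of $\overline{\gamma_{x\tau^{-1}}}$, $\overline{\gamma_{y\tau^{-1}}}$ is individually trivial, whereas the paper applies the same regularity argument to the single quotient $\gamma_{x\tau^{-1}}^{-1}\gamma_{y\tau^{-1}}$.
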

\begin{proof}
	By \ref{itm:C1}, \ref{itm:LM2} holds for $U_\infty$. Now, by definition, any other $U_x$ is equal to $U_\infty^g=g^{-1}U_\infty g$ for some $g$ with $\infty g = x$.
    It follows that each $U_x$ fixes $x$ and acts sharply transitively on $(X\setminus\overline{\infty})g = X\setminus\overline{x}$,
    so \ref{itm:LM2} holds for all root groups.

	Similarly, \ref{itm:C1'} implies \ref{itm:LM2'} for $U_{\overline{\infty}}$ because $U_{\overline{\infty}}$ fixes $\overline{\infty}$, since $U_\infty$ fixes $\infty$.
    As before, any $U_x$ is equal to $U_\infty^g=g^{-1}U_\infty g$ for some $g$ with $\infty g = x$, so $U_{\overline{x}}$ is the induced action of $U_\infty^g$ on $\overline{X}$.
    This implies that $U_{\overline{x}}$ fixes $\overline{\infty}g = \overline{x}$ and acts sharply transitively on $(\overline{X}\setminus\{\overline{\infty}\})g = \overline{X}\setminus\{\overline{x}\}$.

	We finally show \ref{itm:LM1}. Let $x\sim y$ and suppose they are not equivalent to $\infty$. Then $U_x^{\alpha_x^{-1}\alpha_y} = U_y$ by definition. Now $\alpha_x^{-1}\alpha_y$ is in $U$, and fixes $\overline{x}$. By \ref{itm:C1'}, this implies that the induced permutation $\alpha_x^{-1}\alpha_y$ on $\overline{X}$ is trivial, so the induced action of $U_x$ is the same as that of $U_y$. If $x\sim y\sim \infty$, we have $U_x^{\gamma_{x\tau^{-1}}^{-1}\gamma_{y\tau^{-1}}} = U_y$. Now $\gamma_{x\tau^{-1}}^{-1}\gamma_{y\tau^{-1}}$ is in $U_0$, and fixes $\overline{x}$. By \ref{itm:LM2'}, the induced action of $\gamma_{x\tau^{-1}}^{-1}\gamma_{y\tau^{-1}}$ on $\overline{X}$ is trivial, so the induced action of $U_x$ on $\overline{X}$ is identical to that of $U_y$.
\end{proof}

\begin{example}\label{ex:PSL2R-2}
	In Example~\ref{ex:PSL2R}, we described $\M(R)$ for a local ring $R$ using two root groups in order to generate the entire little projective group.
    Alternatively, we can describe this example with only one root group, and add $\tau$ to the setup, i.e., we define the local Moufang set $\M(R)$ as $\M(U,\tau)$, with
	\[U = \left\{\begin{bmatrix}
			1 & r \\
			0 & 1
		\end{bmatrix}\in\PSL_2(R)\,\middle|\, r\in R\right\},\qquad
        \tau = \begin{bmatrix}
			0 & 1 \\
			-1 & 0
		\end{bmatrix} . \]
	We will now verify the three conditions \ref{itm:C1}, \ref{itm:C1'} and \ref{itm:C2}. For the first, we note that $U$ fixes $[0,1]=:\infty$, and for any $[r,1]$ and $[s,1]$ in $X\setminus\overline{\infty}$, there is a unique element of $U$ mapping the first to the second: $\begin{bsmallmatrix} 1 & s-r \\ 0 & 1 \end{bsmallmatrix}$. To show condition~\ref{itm:C1'}, we use the projection $R\to R/\m \colon r\mapsto\overline{r}$. We can see that $\overline{[a,b]} = [\overline{a},\overline{b}]$ and $\overline{\begin{bsmallmatrix} 1 & r \\ 0 & 1 \end{bsmallmatrix}} = \begin{bsmallmatrix} \overline{1} & \overline{r} \\ \overline{0} & \overline{1} \end{bsmallmatrix}$. The argument for condition~\ref{itm:C1'} is now identical to that for condition~\ref{itm:C1}, but using the residue field instead of the local ring. The last condition is straightforward: $\infty\tau = [1,0]\nsim \infty$, and $\infty\tau^2 = \infty$.

	By Proposition~\ref{pr:constr}, we now know that $\M(R)$ satisfies the first three axioms of a local Moufang set.
\end{example}

\subsection{Conditions to satisfy \texorpdfstring{\ref{itm:LM3}}{(LM3)}}

To ensure that Construction~\ref{constr:MUtau} gives a local Moufang set, we will need more information about the action of the Hua maps.
We will first prove a few lemmas.
Throughout this section, we let $\M(U,\tau)$ be as in Construction~\ref{constr:MUtau}.
\begin{lemma}
	Let $x\in X$ be a unit. Then the following are equivalent:
	\begin{enumerate}[itemsep=0.4ex]
		\item $U_\infty^{h_x} = U_\infty$; \label{lem:loceq2}
		\item $U_\infty^{\gamma_{x\tau^{-1}}} = U_x$. \label{lem:loceq3}
		\item $U_0^{\mu_x} = U_\infty$; \label{lem:loceq4}
	\end{enumerate}
\end{lemma}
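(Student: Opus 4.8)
The plan is to prove the three equivalences by two short conjugation computations, establishing $U_\infty^{h_x}=U_\infty \Leftrightarrow U_0^{\mu_x}=U_\infty$ first and then $U_0^{\mu_x}=U_\infty \Leftrightarrow U_\infty^{\gamma_{x\tau^{-1}}}=U_x$, using only the data of the construction and the axioms \ref{itm:LM1}, \ref{itm:LM2}, \ref{itm:LM2'} already verified in Proposition~\ref{pr:constr}. At the outset I would record two elementary facts. First, $\alpha_{-z}=\alpha_z^{-1}$ (immediate from $-z:=0\alpha_z^{-1}$), hence $\gamma_{-z}=(\alpha_{-z})^\tau=\gamma_z^{-1}$; in particular $\gamma_{-(x\tau^{-1})}=\gamma_{x\tau^{-1}}^{-1}$. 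Second, $\tau\gamma_y=\tau\tau^{-1}\alpha_y\tau=\alpha_y\tau$. I would also note that every point occurring as a subscript here (such as $(-x)\tau^{-1}$ and $\til x$) is non-equivalent to $\infty$ whenever $x$ is a unit, so the corresponding $\alpha$'s lie in $U_\infty=U$ and the $\gamma$'s lie in $U_0=U_\infty^\tau$.

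For $U_\infty^{h_x}=U_\infty \Leftrightarrow U_0^{\mu_x}=U_\infty$, I would rephrase the first condition as $h_x\in N_G(U_\infty)$. Expanding $\tau\mu_x=\tau\gamma_{(-x)\tau^{-1}}\alpha_x\gamma_{-(x\tau^{-1})}=\alpha_{(-x)\tau^{-1}}\tau\alpha_x\tau^{-1}\alpha_{-(x\tau^{-1})}\tau$ by the two facts above, a purely mechanical substitution into the construction's formula for $h_x$ gives
\[
h_x=\alpha_{(-x)\tau^{-1}}^{-1}\,\tau\mu_x\,\alpha_{-\til x},
\]
so $h_x$ and $\tau\mu_x$ differ only by left and right multiplication by the elements $\alpha_{(-x)\tau^{-1}}^{-1},\alpha_{-\til x}\in U_\infty$. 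Since $U_\infty\leq N_G(U_\infty)$, this yields $h_x\in N_G(U_\infty)\Leftrightarrow \tau\mu_x\in N_G(U_\infty)$; and as $U_0=U_\infty^\tau$ we have $U_\infty^{\tau\mu_x}=U_0^{\mu_x}$, so the latter is exactly $U_0^{\mu_x}=U_\infty$.

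For $U_0^{\mu_x}=U_\infty \Leftrightarrow U_\infty^{\gamma_{x\tau^{-1}}}=U_x$, I would compute $U_0^{\mu_x}$ directly. Writing $\mu_x=\gamma_{(-x)\tau^{-1}}\,\alpha_x\,\gamma_{x\tau^{-1}}^{-1}$ (using $\gamma_{-(x\tau^{-1})}=\gamma_{x\tau^{-1}}^{-1}$), the leftmost factor lies in $U_0$ and hence acts trivially on $U_0$ by conjugation, so
\[
U_0^{\mu_x}=\bigl(U_0^{\alpha_x}\bigr)^{\gamma_{x\tau^{-1}}^{-1}}=U_x^{\gamma_{x\tau^{-1}}^{-1}},
\]
where the last equality is the defining relation $U_x=U_0^{\alpha_x}$ of the construction (valid since a unit satisfies $x\nsim\infty$). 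Conjugating by $\gamma_{x\tau^{-1}}$ then shows $U_0^{\mu_x}=U_\infty$ if and only if $U_x=U_\infty^{\gamma_{x\tau^{-1}}}$, which is the second condition.

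I do not expect a genuine obstacle; the content is index bookkeeping. The one point requiring real care — and the thing I would watch most closely — is to use only relations available at this stage: the identities of Lemma~\ref{prop:mu} (in particular the coincidence of the two expressions for $\mu_x$) were proved using \ref{itm:LM3} for an actual local Moufang set and are therefore \emph{not} yet at our disposal. For this reason I would obtain $h_x=\alpha_{(-x)\tau^{-1}}^{-1}\tau\mu_x\alpha_{-\til x}$ by direct substitution from the defining formulas for $h_x$ and $\mu_x$, rather than by invoking $h_x=\tau\mu_x$ as a known fact. The remaining care is simply verifying that each subscript denotes a point not equivalent to $\infty$, so that the memberships $\alpha_\bullet\in U_\infty$ and $\gamma_\bullet\in U_0$ used above are justified.
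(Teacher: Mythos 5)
Your proposal is correct and takes essentially the same route as the paper: your second computation, $U_0^{\mu_x} = U_x^{\gamma_{x\tau^{-1}}^{-1}}$ via the fact that $\gamma_{(-x)\tau^{-1}}\in U_0$ normalizes $U_0$, is verbatim the paper's proof of (ii) $\Leftrightarrow$ (iii), while your factorization $h_x = \alpha_{(-x)\tau^{-1}}^{-1}\,\tau\mu_x\,\alpha_{-\til x}$ with both bookends in $U_\infty$ is just a repackaged form of the paper's step-by-step peeling of the factors of $h_x$ (the paper links (i) $\Leftrightarrow$ (ii) directly, you link (i) $\Leftrightarrow$ (iii), but the manipulations are the same). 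Your explicit care to use only the defining formulas of the construction rather than Lemma~\ref{prop:mu} --- whose proof relies on \ref{itm:LM3} --- exactly matches the paper's own remark that only the definitions of the root groups in $\M(U,\tau)$ are used.
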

\begin{proof}\leavevmode
	\begin{itemize}[leftmargin=1.8cm, labelwidth=5ex, itemsep=0.5ex]
		\item[\ref{lem:loceq2}$\Leftrightarrow$\ref{lem:loceq3}.] We have
			\begin{align*}
				U_\infty^{\tau\alpha_x\tau^{-1}\alpha_{-(x\tau^{-1})}\tau\alpha_{-(-(x\tau^{-1}))\tau}} = U_\infty
					&\quad\Longleftrightarrow\quad U_0^{\alpha_x\tau^{-1}\alpha_{-(x\tau^{-1})}\tau} = U_\infty^{\alpha_{(-(x\tau^{-1}))\tau}} \\
					\qquad\quad\Longleftrightarrow\quad U_x^{\tau^{-1}\alpha^{-1}_{x\tau^{-1}}\tau} = U_\infty &\quad\Longleftrightarrow\quad
					U_x = U_\infty^{\gamma_{x\tau^{-1}}}\,,
			\end{align*}
		where we only use the definitions of the root groups in $\M(U,\tau)$.
		\item[\ref{lem:loceq3}$\Leftrightarrow$\ref{lem:loceq4}.] We have $U_0^{\mu_x} = U_0^{\gamma_{(-x)\tau^{-1}}\alpha_x\gamma_{-(x\tau^{-1})}} = 	U_x^{\gamma_{-(x\tau^{-1})}}$, so the equivalence follows.\qedhere
	\end{itemize}
\end{proof}

\begin{lemma}\label{lem:gleq}
	The following are equivalent:
	\begin{enumerate}[itemsep=0.4ex]
		\item $U_\infty^{h_x} = U_\infty$ for all units $x\in X$; \label{lem:gleq2}
		\item $U_\infty^{\gamma_{x\tau^{-1}}} = U_x$ for all units $x\in X$. \label{lem:gleq3}
		\item $U_0^{\mu_x} = U_\infty$ for all units $x\in X$; \label{lem:gleq4}
		\item $U_0 = U_\infty^{\mu_x}$ for all units $x\in X$; \label{lem:gleq5}
	\end{enumerate}
\end{lemma}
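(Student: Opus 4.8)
The statement to prove is Lemma~\ref{lem:gleq}, asserting the equivalence of four conditions that all amount to saying the relevant conjugation behaves well for \emph{all} units simultaneously. Let me plan the proof.

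=== LEMMA STATEMENT ===

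\begin{lemma}\label{lem:gleq}
	The following are equivalent:
	\begin{enumerate}[itemsep=0.4ex]
		\item $U_\infty^{h_x} = U_\infty$ for all units $x\in X$; \label{lem:gleq2}
		\item $U_\infty^{\gamma_{x\tau^{-1}}} = U_x$ for all units $x\in X$. \label{lem:gleq3}
		\item $U_0^{\mu_x} = U_\infty$ for all units $x\in X$; \label{lem:gleq4}
		\item $U_0 = U_\infty^{\mu_x}$ for all units $x\in X$; \label{lem:gleq5}
	\end{enumerate}
\end{lemma}

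=== PROOF PLAN ===

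The plan is to leverage the previous lemma, which already established the equivalence of the pointwise statements (\ref{lem:loceq2}), (\ref{lem:loceq3}), (\ref{lem:loceq4}) for a single fixed unit $x$. Quantifying each of those three conditions over all units $x$ gives the equivalence of (\ref{lem:gleq2}), (\ref{lem:gleq3}), (\ref{lem:gleq4}) immediately and for free, since a biconditional that holds for each individual $x$ remains a biconditional after universal quantification over $x$. So the genuine content of this lemma is the new fourth condition (\ref{lem:gleq5}), and the whole task reduces to linking it to one of the first three; I would connect it to (\ref{lem:gleq4}).

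First I would record the trivial implications. For the forward direction (\ref{lem:gleq4})$\Rightarrow$(\ref{lem:gleq5}): if $U_0^{\mu_x} = U_\infty$ holds for every unit $x$, then applying this to the unit $-x$ (which is a unit whenever $x$ is, by the corollary to Proposition~\ref{pr:unit}) gives $U_0^{\mu_{-x}} = U_\infty$; since $\mu_{-x} = \mu_x^{-1}$ by Lemma~\ref{prop:mu}, this reads $U_0^{\mu_x^{-1}} = U_\infty$, which conjugating by $\mu_x$ rearranges to $U_0 = U_\infty^{\mu_x}$. The reverse direction (\ref{lem:gleq5})$\Rightarrow$(\ref{lem:gleq4}) is symmetric: from $U_0 = U_\infty^{\mu_x}$ for all units, specialize to $-x$ and use $\mu_{-x} = \mu_x^{-1}$ again to recover $U_0^{\mu_x} = U_\infty$. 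The key mechanism in both directions is that the family of $\mu$-maps is closed under inverses (as $\mu_x^{-1} = \mu_{-x}$) combined with the closure of the set of units under negation, so passing between $U_0^{\mu_x}=U_\infty$ and $U_0=U_\infty^{\mu_x}$ costs nothing once we range over all units.

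The only subtlety worth flagging is that one must quantify \emph{over all units}, not a single one: for a fixed $x$ the two conditions $U_0^{\mu_x}=U_\infty$ and $U_0=U_\infty^{\mu_x}$ are genuinely different statements, and they only become interchangeable because negation permutes the indexing set of units. I do not anticipate any real obstacle here; the proof is essentially bookkeeping once the previous lemma and the identities $\mu_{-x}=\mu_x^{-1}$ and ``$x$ unit $\iff$ $-x$ unit'' are invoked. I would therefore write the proof tersely, stating that (\ref{lem:gleq2})$\Leftrightarrow$(\ref{lem:gleq3})$\Leftrightarrow$(\ref{lem:gleq4}) is the universal quantification of the preceding lemma, and then giving the short two-line argument above for (\ref{lem:gleq4})$\Leftrightarrow$(\ref{lem:gleq5}).
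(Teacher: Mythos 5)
Your proof matches the paper's: the equivalence of \ref{lem:gleq2}--\ref{lem:gleq4} is obtained by quantifying the preceding pointwise lemma over all units, and \ref{lem:gleq4}$\Leftrightarrow$\ref{lem:gleq5} by substituting $-x$ for $x$ together with $\mu_{-x}=\mu_x^{-1}$, exactly as in the paper. Your added remark that the last equivalence genuinely requires ranging over all units (since negation permutes the set of units) is a correct observation the paper leaves implicit.
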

\begin{proof}
	The equivalence of \ref{lem:gleq2}-\ref{lem:gleq4} is immediate from the previous lemma. The equivalence between \ref{lem:gleq4} and \ref{lem:gleq5} follows by replacing $x$ with $-x$ and noting that $\mu_{-x} = \mu_x^{-1}$.
\end{proof}

\begin{lemma}\label{lem:huafix}
	Assume that $h_x$ normalizes $U$ for all units $x\in X$. Then
	\begin{enumerate}[itemsep=0.4ex]
        \item $U^h = U$ for all $h\in H$; \label{itm:huafix}
        \item $U_0^\circ U_\infty \subset U_\infty H U_0^\circ$.
	\end{enumerate}
\end{lemma}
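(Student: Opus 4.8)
The plan is to prove the two parts in order, using the hypothesis that $h_x$ normalizes $U=U_\infty$ for every unit $x$.

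For part \textnormal{(\textit{i})}, the Hua subgroup $H$ is generated by the maps $\mu_x\mu_y$, and since $\tau$ is itself a $\mu$-map, $H$ is equally generated by the Hua maps $h_x=\tau\mu_x$ as $x$ ranges over the units. So it suffices to check that each generator $h_x$ normalizes $U_\infty$, which is precisely the hypothesis; the property $U^h=U$ for all $h\in H$ then follows because the normalizer of $U_\infty$ in $G$ is a subgroup containing every generator of $H$. I would phrase this as: the set $\{g\in G\mid U_\infty^g=U_\infty\}$ is a subgroup, it contains $h_x$ for every unit $x$ by assumption, hence it contains $H=\langle h_x\mid x\text{ a unit}\rangle$.

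For part \textnormal{(\textit{ii})}, the key observation is that once we know $U_\infty^{h_x}=U_\infty$ for all units $x$, Lemma~\ref{lem:gleq} applies and gives us all its equivalent conclusions, in particular $U_0^{\mu_x}=U_\infty$ and $U_\infty^{\gamma_{x\tau^{-1}}}=U_x$ for every unit $x$. These are exactly the facts that make the $\mu$-map computations behave as they do in an honest local Moufang set. I would then follow the same two-step strategy as in Proposition~\ref{prop:huaincl}: first establish the restricted inclusion $U_0^\circ U_\infty^\times\subset U_\infty^\times H U_0^\circ$, and then bootstrap to the full inclusion by splitting any $\alpha_y\in U_\infty^\circ$ as a product $\alpha_{y'}\alpha_e$ of two units.

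The heart of the argument, and the step I expect to be the main obstacle, is re-deriving the quasi-invertibility identity of Proposition~\ref{prop:quainv} in the construction setting $\M(U,\tau)$, where we do not yet have \ref{itm:LM3} for free and so cannot simply cite that proposition. The proof of Proposition~\ref{prop:quainv} used \ref{itm:LM3} repeatedly to identify conjugates of root groups (e.g.\ to conclude that certain products lie in $U_\infty$). Here those conjugation facts must instead be extracted from the hypothesis via Lemma~\ref{lem:gleq}: the equalities $U_0^{\mu_x}=U_\infty$ and $U_\infty^{\gamma_{x\tau^{-1}}}=U_x$ are precisely the instances of conjugation-compatibility needed to run the same computation. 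Once the identity $\alpha_x^\tau\alpha_y=\alpha_{-\prescript{x}{}y}\,\mu_{\prescript{x}{}y}\,\mu_y\,\alpha_{-x^y}^\tau$ is available for quasi-invertible $(x,y)$ with $x\sim0$ and $\infty\nsim y\nsim0$, the membership $\alpha_{-\prescript{x}{}y}\mu_{\prescript{x}{}y}\mu_y\alpha_{-x^y}^\tau\in U_\infty^\times H U_0^\circ$ follows exactly as before from $0\nsim-\prescript{x}{}y\nsim\infty$ and $-x^y\sim0$, and the bootstrapping step (using that $HU_\infty^\times=U_\infty^\times H$, which follows from part \textnormal{(\textit{i})}) closes out the proof verbatim as in Proposition~\ref{prop:huaincl}.
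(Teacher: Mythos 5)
Your part (\hspace{-0.2ex}\textit{ii}\hspace{-0.1ex}) is essentially the paper's own argument: the paper likewise repeats Proposition~\ref{prop:huaincl} mutatis mutandis, observing that \ref{itm:LM3} was used only twice --- once inside the proof of Lemma~\ref{prop:mu}\ref{itm:mutau} (on which the quasi-inverse identity of Proposition~\ref{prop:quainv} depends) and once in Proposition~\ref{prop:quainv} itself --- and that both uses reduce, through the definitions of the root groups in Construction~\ref{constr:MUtau}, to the equality $U_x = U_\infty^{\gamma_{x\tau^{-1}}}$ supplied by Lemma~\ref{lem:gleq}. Your account of this step is accurate, up to the harmless imprecision that one of the two uses of \ref{itm:LM3} sits in Lemma~\ref{prop:mu}\ref{itm:mutau} rather than in Proposition~\ref{prop:quainv} proper.

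Part (\hspace{-0.2ex}\textit{i}\hspace{-0.1ex}), however, has a genuine gap. You argue that $H=\langle h_x\mid x\text{ a unit}\rangle$ ``since $\tau$ is itself a $\mu$-map'' and $h_x=\tau\mu_x$. Both facts belong to the local Moufang set setting of Section~\ref{se:2} and are not available here: in Construction~\ref{constr:MUtau}, $\tau$ is an arbitrary element of $\Sym(X,\lsim)$ subject only to \ref{itm:C1}, \ref{itm:C1'} and \ref{itm:C2}, so it need not be a $\mu$-map, and $h_x$ is \emph{defined} by the explicit word $\tau\alpha_x\tau^{-1}\alpha_{-(x\tau^{-1})}\tau\alpha_{-(-(x\tau^{-1}))\tau}$, not as $\tau\mu_x$. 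The identity $h_x=\tau\mu_x$ amounts to Lemma~\ref{prop:mu}\ref{itm:muform2}, whose proof runs through \ref{itm:mutau} and hence through \ref{itm:LM3} --- precisely the axiom this section cannot invoke without the substitute conjugation facts. Moreover, even granting $h_x=\tau\mu_x$, the inclusion $\langle h_x\mid x\text{ a unit}\rangle\subset H$ would force $\tau$ to lie in $H\mu_y$ for some unit $y$, which fails for general $\tau$; only the direction $H\leq\langle h_x\mid x\text{ a unit}\rangle$ would survive, via $\mu_x\mu_y=h_{-x}^{-1}h_y$, and that again needs the unproven identity. The repair is the paper's one-line argument, using the very Lemma~\ref{lem:gleq} you invoke for part (\hspace{-0.2ex}\textit{ii}\hspace{-0.1ex}): the hypothesis is equivalent to $U_0^{\mu_x}=U_\infty$ and $U_\infty^{\mu_x}=U_0$ for all units $x$, whence every generator $\mu_x\mu_y$ of $H$ satisfies $U_\infty^{\mu_x\mu_y}=U_0^{\mu_y}=U_\infty$, so $H$ normalizes $U=U_\infty$.
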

\begin{proof}\leavevmode
	\begin{enumerate}[itemsep=0.4ex]
        \item
            By Lemma~\ref{lem:gleq}, $U_0^{\mu_x} = U_\infty$ and $U_0 = U_\infty^{\mu_x}$ for all units $x$.
            Now $H$ is generated by all products of two $\mu$-maps, which all normalize $U_\infty$, so any element of $H$ normalizes $U_\infty$.
        \item
            We can follow the proof of Proposition~\ref{prop:huaincl} mutatis mutandis.
            We point out that we used \ref{itm:LM3} only twice: once in the proof of Lemma~\ref{prop:mu}\ref{itm:mutau}, and once in Proposition~\ref{prop:quainv}.
            In the proof of Lemma~\ref{prop:mu}\ref{itm:mutau}, we used \ref{itm:LM3} to deduce $U_0^{\alpha_{x\tau}\,\alpha_{-x}^\tau} =U_\infty$ for any unit $x$. This also holds in the situation here since 
            \[U_0^{\alpha_{x\tau}\,\alpha_{-x}^\tau} = U_\infty\iff U_{x\tau}^{\alpha_x^{-\tau}} = U_\infty \iff U_{x\tau} = U_\infty^{\gamma_x}\iff U_{x\tau}
                = U_\infty^{\gamma_{(x\tau)\tau^{-1}}}\,.\]
            These equivalences hold because of the definitions of the root groups in the construction,
            and the final equality is true by the assumption and Lemma~\ref{lem:gleq}.

            In the proof of Proposition~\ref{prop:quainv}, we used \ref{itm:LM3} to deduce $U_0^{\alpha_y^{-1}\alpha_{(-y)\tau^{-1}}^{-\tau}}=U_\infty$ for any unit $y$. In the situation here this also holds since 
            \[U_0^{\alpha_y^{-1}\alpha_{(-y)\tau^{-1}}^{-\tau}} = U_\infty \iff U_{-y} = U_\infty^{\alpha_{(-y)\tau^{-1}}^\tau}
            \iff U_{-y} = U_\infty^{\gamma_{(-y)\tau^{-1}}}\,,\]
            where the equivalences are again by the definitions in the construction, and the final equality holds by Lemma~\ref{lem:gleq} again.
        \qedhere
	\end{enumerate}
\end{proof}

This additional assumption will also be sufficient to ensure that the construction is a local Moufang set.

\begin{theorem}\label{thm:constrMouf}
	Let $\M(U,\tau)$ be as in Construction~\ref{constr:MUtau}. Then $\M(U,\tau)$ is a local Moufang set if and only if $h_x$ normalizes $U$ for all units $x$.
\end{theorem}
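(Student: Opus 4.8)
The statement is an equivalence whose forward direction is immediate, so the real content lies in the converse. I will describe both, spending most of the effort on the hard direction, where the only thing left to check after Proposition~\ref{pr:constr} is axiom \ref{itm:LM3}.

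For the forward direction, suppose $\M(U,\tau)$ is already a local Moufang set, and fix a unit $x$. A direct computation from \ref{itm:C1}--\ref{itm:C2} shows that $\mu_x=\gamma_{(-x)\tau^{-1}}\alpha_x\gamma_{-(x\tau^{-1})}\in U_0U_\infty U_0\subset G$ swaps $0$ and $\infty$, and that $h_x=\tau\mu_x$. Then, using only that $U_\infty^\tau=U_0$ by definition and \ref{itm:LM3} for the element $\mu_x\in G$,
\[ U_\infty^{h_x}=U_\infty^{\tau\mu_x}=(U_\infty^\tau)^{\mu_x}=U_0^{\mu_x}=U_{0\mu_x}=U_\infty, \]
so $h_x$ normalizes $U=U_\infty$. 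Note this needs neither $\tau\in G$ nor $h_x\in G$.

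For the converse I assume each $h_x$ ($x$ a unit) normalizes $U_\infty$. Since $U_x$ is built from $U_\infty$ and $\tau$, we have $G\subset\langle U_\infty,\tau\rangle$; and \ref{itm:LM3} is stable under products and inverses (if $U_z^g=U_{zg}$ and $U_z^{g'}=U_{zg'}$ for all $z$, then $U_z^{gg'}=U_{zgg'}$, and the inverse case follows after $z\mapsto zg^{-1}$). Hence it suffices to prove $U_x^g=U_{xg}$ for all $x\in X$ separately for $g\in U_\infty$ and for $g=\tau$. First I would record the consequences of the hypothesis: Lemma~\ref{lem:gleq} gives $U_\infty^{\gamma_{x\tau^{-1}}}=U_x$ for every unit $x$, and Lemma~\ref{lem:huafix} gives that every $h\in H$ normalizes $U_\infty$ and that $U_0^\circ U_\infty\subset U_\infty H U_0^\circ$. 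Combining Lemma~\ref{prop:mu}\ref{itm:mutau} (available here, as explained in the proof of Lemma~\ref{lem:huafix}) with Lemma~\ref{lem:gleq} yields $\mu_x\tau=\tau\mu_{(-x)\tau}$ and hence $U_0^\tau=U_\infty^{\mu_x\tau}=U_0^{\mu_{(-x)\tau}}{}^{\,\tau^{-1}\!\tau}=U_\infty$; equivalently $\tau^2$ normalizes both $U_\infty$ and $U_0$, and $U_\infty^{\tau^{-1}}=U_0$, $U_0^{\tau^2}=U_0$. From $\tau^2$ fixing $0$ one gets $\alpha_y^{\tau^2}=\alpha_{y\tau^2}$ (and likewise $\gamma_{y\tau^{-1}}^{\tau^2}=\gamma_{(y\tau^2)\tau^{-1}}$), and a one‑line conjugation then gives the auxiliary fact $U_z^{\tau^2}=U_{z\tau^2}$ for every $z\in X$.

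The conjugation by $g\in U_\infty$ splits on whether $x\nsim\infty$. If $x\nsim\infty$ write $g=\alpha_a$ with $a=0g$; the group law $\alpha_x\alpha_a=\alpha_{x\alpha_a}$ (forced by sharp transitivity of $U_\infty$) gives $U_x^g=U_0^{\alpha_xg}=U_0^{\alpha_{xg}}=U_{xg}$ with no use of the hypothesis. The case $x\sim\infty$ is the main obstacle and is where the hypothesis really enters. Here $U_x=U_\infty^{\gamma_{x\tau^{-1}}}$ with $\gamma_{x\tau^{-1}}\in U_0^\circ$, so $U_x^g=U_\infty^{\gamma_{x\tau^{-1}}g}$ and $\gamma_{x\tau^{-1}}g\in U_0^\circ U_\infty\subset U_\infty H U_0^\circ$; writing $\gamma_{x\tau^{-1}}g=u_\infty h c'$ and absorbing $u_\infty$ and $h$ (which normalize $U_\infty$) rewrites this as $U_\infty^{c'}$ with $c'\in U_0^\circ$. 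Tracking the image of $\infty$ shows $\infty c'=xg$, and since $U_0$ acts sharply transitively on $X\setminus\overline{0}\ni xg$, the element $c'$ must equal $\gamma_{(xg)\tau^{-1}}$; thus $U_x^g=U_\infty^{\gamma_{(xg)\tau^{-1}}}=U_{xg}$.

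Finally, for $g=\tau$ I distinguish three cases. For $x\sim 0$ (in particular $x=0$) we have $U_x=U_0^{\alpha_x}$ and $U_x^\tau=U_0^{\alpha_x\tau}=U_0^{\tau\gamma_x}=U_\infty^{\gamma_x}=U_{x\tau}$, using $U_0^\tau=U_\infty$ and the definition of $U_{x\tau}$ (as $x\tau\sim\infty$). For $x\sim\infty$, and thanks to $U_\infty^{\gamma_{x\tau^{-1}}}=U_x$ also for $x$ a unit, we have $U_x=U_\infty^{\gamma_{x\tau^{-1}}}$, and then
\[ U_x^\tau=U_\infty^{\gamma_{x\tau^{-1}}\tau}=\bigl((U_\infty^{\tau^{-1}})^{\alpha_{x\tau^{-1}}}\bigr)^{\tau^2}=(U_0^{\alpha_{x\tau^{-1}}})^{\tau^2}=U_{x\tau^{-1}}^{\tau^2}=U_{x\tau}, \]
where $U_0^{\alpha_{x\tau^{-1}}}=U_{x\tau^{-1}}$ is the already proved $U_\infty$‑case and the last equality is the auxiliary $\tau^2$‑statement. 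These cases exhaust $X$, so \ref{itm:LM3} holds and $\M(U,\tau)$ is a local Moufang set. The step I expect to be most delicate is the $x\sim\infty$ case under $U_\infty$: it is the one genuinely forcing the hypothesis, via the Bruhat‑type inclusion of Lemma~\ref{lem:huafix} together with the sharp‑transitivity uniqueness that identifies the leftover $U_0^\circ$‑factor.
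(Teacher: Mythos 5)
Your converse is correct and follows the same skeleton as the paper's proof: Proposition~\ref{pr:constr} disposes of \ref{itm:LM1}, \ref{itm:LM2}, \ref{itm:LM2'}; the hypothesis is converted via Lemmas~\ref{lem:gleq} and~\ref{lem:huafix}; one checks $U_y^g=U_{yg}$ on a generating set containing $U_\infty$; and your treatment of the genuinely hard case ($y\sim\infty$, $g\in U_\infty$) is word for word the paper's argument (decompose $\gamma_{y\tau^{-1}}g\in U_0^\circ U_\infty\subset U_\infty HU_0^\circ$, absorb the $U_\infty$- and $H$-factors, identify the leftover $U_0^\circ$-factor by sharp transitivity). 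The one genuine difference is the second generator: the paper fixes a unit $e$, uses $G\subset\langle U_\infty,\mu_e\rangle$ with $\mu_e\in G$, and verifies $U_y^{\mu_e}=U_{y\mu_e}$ in two cases; you instead use $G\subset\langle U_\infty,\tau\rangle$ and verify $U_y^\tau=U_{y\tau}$. This forces you to first establish $U_0^\tau=U_\infty$ (equivalently that $\tau^2$ normalizes $U_\infty$, which is \emph{not} part of Construction~\ref{constr:MUtau}: only $U_0:=U_\infty^\tau$ is given), and your derivation of it -- from Lemma~\ref{prop:mu}\ref{itm:mutau}, whose availability in this setting is exactly what the proof of Lemma~\ref{lem:huafix} justifies, combined with Lemma~\ref{lem:gleq} -- is sound and on the same foundation as the paper's own use of that lemma. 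Your route proves slightly more (that $\tau$ itself permutes the root-group family compatibly, so $\tau$ could be adjoined to $G$), at the cost of the auxiliary $\tau^2$-facts; the paper's choice of $\mu_e$ buys the convenience of never leaving $G$, at the cost of the fiddly subcases $y\mu\sim\infty$ versus $y\mu\nsim\infty$ in its verification.

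The forward direction has one inaccuracy worth repairing: the identity $h_x=\tau\mu_x$ is \emph{not} ``a direct computation from (C1)--(C2)''. Writing $w=\tau\alpha_x\tau^{-1}\alpha_{-(x\tau^{-1})}\tau$, one has $\tau\mu_x=\alpha_{(-x)\tau^{-1}}w$ while $h_x=w\,\alpha_{-\til x}$, so the claimed identity is equivalent to $w^{-1}\alpha_{(-x)\tau^{-1}}w=\alpha_{-\til x}$, a conjugation statement of exactly \ref{itm:LM3}-type; without the normalization hypothesis the two maps genuinely differ, and even with \ref{itm:LM3} in hand one must argue that $\alpha_{(-x)\tau^{-1}}^w$ lands in $U_\infty$ (which is delicate since $\tau$, hence $w$, need not lie in $G$). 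The clean fix is the paper's own: skip $h_x=\tau\mu_x$ entirely and use the lemma preceding Lemma~\ref{lem:gleq}, whose proof is purely definitional, giving $U_\infty^{h_x}=U_\infty\iff U_0^{\mu_x}=U_\infty$; then \ref{itm:LM3} applied to $\mu_x\in U_0\alpha_xU_0\subset G$ (which swaps $0$ and $\infty$ by the direct computation you did make) finishes it in one line. Since this is the trivial direction and the bridging lemma is one you cite elsewhere, this is a local repair rather than a structural flaw.
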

\begin{proof}
	Assume first that $\M(U,\tau)$ is a local Moufang set. By \ref{itm:LM3}, all $\mu$-maps send $U_0$ to $U_\infty$. By Lemma~\ref{lem:gleq}, this implies that all Hua maps normalize $U = U_\infty$.

	For the converse, we have already shown that \ref{itm:LM1}, \ref{itm:LM2} and \ref{itm:LM2'} hold, so what remains is \ref{itm:LM3}. Fix some unit $e\in X$ and write $\mu = \mu_e$. Then, by our assumptions and by Lemma~\ref{lem:gleq}, $U_x\subset\langle U_\infty, U_0\rangle = \langle U_\infty, \mu\rangle$ for any $x\in X$, so in order to show that $U_y^g = U_{y\cdot g}$ for all $g\in U_x$ and all $y\in X$, it is sufficient to show that $U_y^\mu = U_{y\cdot\mu}$ and $U_y^{\alpha_z} = U_{y\cdot\alpha_z}$ for any $z\nsim\infty$.

	We start by showing $U_y^{\alpha_z} = U_{y\cdot\alpha_z}$ for all $y\in X$ and $z\in X$ s.t.\ $z\nsim\infty$.
    We distinguish two cases.
	\begin{itemize}[leftmargin=1.5cm, labelwidth=5ex, itemsep=0.5ex]
		\item[{$y\nsim\infty$}:] In this case, we have $U_y = U_0^{\alpha_y}$ by definition. Since $U_\infty$ is a group, we have $\alpha_y\alpha_z = \alpha_a$ for some $a$, and by looking at the image of $0$, we find $a = y\alpha_z$, so indeed $U_y^{\alpha_z}= U_0^{\alpha_y\alpha_z} = U_0^{\alpha_{y\alpha_z}} = U_{y\alpha_z}$.

		\item[{$y\sim\infty$}:] By definition, $U_y^{\alpha_z} = U_\infty^{\gamma_{y\tau^{-1}}\alpha_z}$. Since $\gamma_{y\tau^{-1}}\alpha_z\in U_0^\circ U_\infty$, we have $\gamma_{y\tau^{-1}}\alpha_z = \alpha_a h\gamma_b$ for some $a\nsim\infty$, $b\sim0$ and $h\in H$. By calculating the image of $\infty$, we see $b\tau = y\alpha_z$. So we get
		\[U_y^{\alpha_z} = U_\infty^{\gamma_{y\tau^{-1}}\alpha_z} = U_\infty^{\alpha_a h\gamma_{y\alpha_z\tau^{-1}}} = U_\infty^{h\gamma_{y\alpha_z\tau^{-1}}} = U_\infty^{\gamma_{y\alpha_z\tau^{-1}}} = U_{y\alpha_z}\,,\]
		where we used Lemma~\ref{lem:huafix}\ref{itm:huafix}.
	\end{itemize}
	Secondly, we need to show that $U_y^\mu = U_{y\cdot\mu}$ for all $y\in X$; we make the same case distinction.
	\begin{itemize}[leftmargin=1.5cm, labelwidth=5ex, itemsep=0.5ex]
		\item[{$y\nsim\infty$}:] We have $U_y^\mu = U_0^{\alpha_y\mu} = U_0^{\mu\alpha_y^\mu} = U_\infty^{\alpha_y^\mu}$. Now $\alpha_y^\mu\in U_0$, so $\alpha_y^\mu = \gamma_a$ for some $a$, and by checking the image of $\infty$ we find $a\tau = y\mu$. If $y\mu\sim\infty$, we can use the definition of a root group to find $U_\infty^{\alpha_y^\mu} = U_\infty^{\gamma_{y\mu\tau^{-1}}} = U_{y\mu}$. If $y\mu\nsim\infty$, we use $y\nsim \infty$ to conclude $y\mu\nsim0$, so $y\mu$ is a unit and by Lemma~\ref{lem:gleq}\ref{lem:gleq3}, we also have $U_\infty^{\alpha_y^\mu}=U_\infty^{\gamma_{y\mu\tau^{-1}}} = U_{y\mu}$.

		\item[{$y\sim\infty$}:] We have $U_y^\mu = U_\infty^{\gamma_{y\tau^{-1}}\mu} = U_\infty^{\mu\gamma_{y\tau^{-1}}^\mu} = U_0^{\gamma_{y\tau^{-1}}^\mu}$. Now $\gamma_{y\tau^{-1}}^\mu\in U_\infty$, so $\gamma_{y\tau^{-1}}^\mu = \alpha_a$ for some $a$, and by checking the image of $0$ we find $y\mu = a$. Notice that since $y\sim \infty$, we have $y\mu\sim0$, so we can use the definition of the root group of $y\mu$ to get $U_0^{\gamma_{y\tau^{-1}}^\mu} = U_0^{\alpha_{y\mu}} = U_{y\mu}$.\qedhere
	\end{itemize}
\end{proof}

\begin{remark}\label{remark:constructioniso}
	Let $\M(U,\tau)$ and $\M(U',\tau')$ be given by Construction~\ref{constr:MUtau}, with actions on $(X,\lsim)$ and $(X',\lsim')$ respectively,
    and assume there is a bijection $\phi \colon X\to X'$ and a group isomorphism $\theta \colon U\to U'$ such that
	\begin{itemize}
		\item for all $x,y\in X$, we have $x\sim y \iff \phi(x)\sim'\phi(y)$;
		\item for all $x\in X$ and $u\in U$, we have $\phi(x\cdot u) = \phi(x)\cdot \theta(u)$;
		\item for all $x\in X$, we have $\phi(x\cdot\tau) = \phi(x)\cdot\tau'$.
	\end{itemize}
    Then $\M(U,\tau)$ and $\M(U',\tau')$ are isomorphic.
\end{remark}

\begin{remark}
	Assume $\M$ is a local Moufang set, and take $U = U_\infty$ and $\tau$ to be any $\mu$-map. Then by \ref{itm:LM3}, the root groups we get in Construction~\ref{constr:MUtau} are identical to the root groups of $\M$. Hence we can view any local Moufang set as $\M(U,\tau)$ for some $U$ and $\tau$.
\end{remark}

\begin{example}
	In Example~\ref{ex:PSL2R}, we have first introduced $\M(R)$ by giving two root groups.
    In Example~\ref{ex:PSL2R-2} in the previous section, we used construction~\ref{constr:MUtau} to define $\M(R)$.
    Up to now, we have not yet proven that what we described is indeed a local Moufang set.
    With Theorem~\ref{thm:constrMouf} at hand, we can now easily do this.

	For the construction, we used
	\[U = \left\{\begin{bmatrix}
			1 & r \\
			0 & 1
		\end{bmatrix}\in\PSL_2(R)\,\middle|\, r\in R\right\},\qquad\tau = \begin{bmatrix}
			0 & 1 \\
			-1 & 0
		\end{bmatrix} .\]
	One can check that $[1,r]\tau = [1,r]\tau^{-1} = [1,-r^{-1}]$ and $-[1,r] = [1,-r]$ for $[1,r]$ a unit. This implies that
    \begin{align*}
        h_{[1,r]} &= \tau\alpha_{[1,r]}\tau^{-1}\alpha_{-([1,r]\tau^{-1})}\tau\alpha_{-(-([1,r]\tau^{-1}))\tau} \\
        &= \begin{bmatrix} 0 & -1 \\ 1 & 0 \end{bmatrix}\begin{bmatrix} 1 & r \\ 0 & 1 \end{bmatrix}\begin{bmatrix} 0 & -1 \\ 1 & 0 \end{bmatrix}
        \begin{bmatrix} 1 & r^{-1} \\ 0 & 1 \end{bmatrix}\begin{bmatrix} 0 & -1 \\ 1 & 0 \end{bmatrix}\begin{bmatrix} 1 & r \\ 0 & 1 \end{bmatrix}
        = \begin{bmatrix} r^{-1} & 0 \\ 0 & r \end{bmatrix} .
    \end{align*}
	Hence, for any unit $[1,r]$,
	\[U^{h_{[1,r]}} = \left\{\begin{bmatrix} 1 & s \\ 0 & 1 \end{bmatrix}\,\middle|\, s\in R\right\}^{h_{[1,r]}}
			= \left\{\begin{bmatrix} 1 & r^2s \\ 0 & 1 \end{bmatrix}\,\middle|\, s\in R\right\} = U\,,\]
	and this shows that $\M(R)$ is indeed a local Moufang set.
\end{example}

\section{Special local Moufang sets and \texorpdfstring{$\PSL_2(R)$}{PSL\textunderscore2(R)}}\label{se:5}

\subsection{Definition and first properties}

We recall that a Moufang set is called special if $\til x = -x$ for all $x \in U^*$.
This property was introduced in the context of abstract rank one groups by F.~Timmesfeld \cite[p.~2]{MR1852057},
and has been thoroughly investigated for Moufang sets \cite{MR2221120,MR2407819}.
It is a difficult open problem whether special Moufang sets always have abelian root groups.
The converse, namely that proper Moufang sets with abelian root groups are always special, has been shown by Y.~Segev \cite{MR2505304}.

We will now introduce the corresponding notion for local Moufang sets. We keep the notations from before; in particular, $\tau$ is an arbitrary $\mu$-map.
\begin{definition}
	A local Moufang set $\M$ is called \emph{special} if $\til x=-x$ for all units $x\in X$,
    or equivalently, if $(-x)\tau=-(x\tau)$ for all units $x\in X$.
\end{definition}
Some basic properties follow immediately from Proposition~\ref{prop:mu}:
\begin{lemma}\label{prob:specialmu}
	Let $x\in X$ be a unit in a special local Moufang set. Then
	\begin{enumerate}[topsep=0pt,itemsep=0.4ex]
		\item $(-y)\mu_x = -(y\mu_x)$ for all units $y\in X$; \label{itm:special-i}
		\item $\mu_x = \alpha_x\alpha_{-x\tau^{-1}}^\tau\alpha_x$; \label{itm:special-ii}
		\item $-x = x\mu_x = x\mu_{-x}$; \label{itm:special-iii}
		\item $\mu_x = \alpha_x\alpha_x^{\mu_{\pm x}}\alpha_x$. \label{itm:special-iv}
		\item $\mu_{-x} = \alpha_x\mu_{-x}\alpha_x\mu_{-x}\alpha_x$. \label{itm:special-v}
	\end{enumerate}
\end{lemma}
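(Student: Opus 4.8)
The plan is to read off all five statements from Lemma~\ref{prop:mu}, using the defining property $\til x = -x$ of a special local Moufang set (together with its equivalent form $(-x)\tau = -(x\tau)$) to collapse the $\til{\ }$-decorated subscripts. Two of the items are then immediate. Substituting $\til x = -x$ into Lemma~\ref{prop:mu}\ref{itm:muform2} turns $\alpha_{-\til x}$ into $\alpha_x$ and yields (ii). Substituting $\til x = -x$ and, for the unit $-x$, $\widetilde{(-x)} = x$ into Lemma~\ref{prop:mu}\ref{itm:muform3} turns both outer factors into $\alpha_x$ and yields (v) verbatim. So the real content lies in (i), (iii) and (iv).

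For (i) I would first isolate a fact valid in \emph{every} local Moufang set: since $h_x$ normalises $U_\infty$ with $\alpha_y^{h_x} = \alpha_{yh_x}$ (Proposition~\ref{prop:huaAut}) and $\alpha_{-y} = \alpha_y^{-1}$, applying the automorphism $h_x$ to $\alpha_{-y}$ gives $\alpha_{(-y)h_x} = (\alpha_y^{h_x})^{-1} = \alpha_{-(yh_x)}$, so that $(-y)h_x = -(yh_x)$ for all $y\nsim\infty$; that is, every Hua map commutes with negation. Specialness enters only to make $\tau$ commute with negation: from $(-z)\tau = -(z\tau)$ applied to the unit $z = y\tau^{-1}$ one gets the twin identity $(-y)\tau^{-1} = -(y\tau^{-1})$. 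Writing $\mu_x = \tau^{-1}h_x$ and chaining the two commutations,
\[ (-y)\mu_x = (-y)\tau^{-1}h_x = \bigl(-(y\tau^{-1})\bigr)h_x = -\bigl((y\tau^{-1})h_x\bigr) = -(y\mu_x), \]
which proves (i).

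Part (iii) then follows quickly: Lemma~\ref{prop:mu}\ref{itm:tilmu} reads $\til x = -((-x)\mu_x)$, so specialness gives $(-x)\mu_x = x$, and combining this with (i) applied to $y = x$ gives $x\mu_x = -x$; applying Lemma~\ref{prop:mu}\ref{itm:tilmu} instead to the unit $-x$ gives $x\mu_{-x} = -x$. For (iv) I would start from (ii) and identify its middle factor $\gamma_{-(x\tau^{-1})} := \alpha_{-(x\tau^{-1})}^\tau$ with the conjugate $\alpha_x^{\mu_{\pm x}}$. Both $\alpha_x^{\mu_x}$ and $\alpha_x^{\mu_{-x}}$ lie in $U_\infty^{\mu_{\pm x}} = U_0$, so, since $U_0$ acts sharply transitively on $X\setminus\overline 0$ and $\infty\nsim 0$, it suffices to compare their action on $\infty$ with that of $\gamma_{-(x\tau^{-1})}$. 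On one side $\infty\gamma_{-(x\tau^{-1})} = (-(x\tau^{-1}))\tau = -x$ by specialness; on the other $\infty\alpha_x^{\mu_x} = \infty\mu_x^{-1}\alpha_x\mu_x = 0\alpha_x\mu_x = x\mu_x = -x$ and likewise $\infty\alpha_x^{\mu_{-x}} = x\mu_{-x} = -x$, both by (iii). Hence the three elements coincide, and (ii) becomes $\mu_x = \alpha_x\alpha_x^{\mu_{\pm x}}\alpha_x$.

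The routine specialisations (ii) and (v) carry no difficulty. The crux is (i): the point worth isolating is that the commutation of $\mu_x$ with negation splits into the general, purely automorphism-theoretic fact that Hua maps commute with negation, and the genuinely special input that $\tau^{\pm1}$ does so. Once (i) is in hand, (iii) is a two-line consequence, and the only remaining computation is the sharp-transitivity identification in (iv), which is mechanical.
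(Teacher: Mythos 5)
Your proof is correct, and three of the five items coincide with the paper's argument: for (ii), (iii) and (v) both you and the paper simply substitute $\til x = -x$ (and, for (v), $\til{(-x)} = x$) into Lemma~\ref{prop:mu}\ref{itm:muform2}, \ref{itm:tilmu} and \ref{itm:muform3}. Where you genuinely diverge is in (i) and (iv), and both of your alternatives work. For (i), the paper's proof is a one-liner: by Lemma~\ref{prop:mu}\ref{itm:tiltau} the operation $\til{\ }$ does not depend on the choice of $\tau$, so the specialness identity $(-y)\tau = -(y\tau)$ holds with \emph{any} $\mu$-map in the role of $\tau$, in particular with $\mu_x$. You instead factor $\mu_x = \tau^{-1}h_x$, prove from Lemma~\ref{prop:huaAut} that every Hua map commutes with negation (a fact valid in any local Moufang set, special or not), and use specialness only to make $\tau^{-1}$ commute with negation on units; this is longer, but it cleanly isolates which half of the commutation is automorphism-theoretic and which half actually needs specialness. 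For (iv), the paper again exploits $\tau$-independence: replace $\tau$ by $\mu_{\pm x}$ in (ii) and simplify the middle subscript using (iii). You instead keep $\tau$ fixed and identify the middle factor $\alpha_{-(x\tau^{-1})}^\tau$ of (ii) with $\alpha_x^{\mu_{\pm x}}$ directly: both lie in $U_0$, both send $\infty$ to $-x$ (by specialness on one side, by (iii) on the other), and sharp transitivity \ref{itm:LM2} forces them to be equal. The paper's substitutions are shorter; your route avoids having to justify the meta-point that identity (ii) remains valid for every choice of $\mu$-map in place of $\tau$, at the cost of one explicit sharp-transitivity computation.
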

\begin{proof}\leavevmode
	\begin{enumerate}[topsep=0pt]
		\item This follows from Lemma~\ref{prop:mu}\ref{itm:tiltau} and the definition of special.
		\item This follows immediately from Lemma~\ref{prop:mu}\ref{itm:muform2}.
		\item By Lemma~\ref{prop:mu}\ref{itm:tilmu}, we have $-x = \til x = -((-x)\mu_x)$, so $x=(-x)\mu_x$ and hence $-x = x\mu_x^{-1} = x\mu_{-x}$.
        On the other hand, \ref{itm:special-i} implies $x = (-x)\mu_{x} = -(x\mu_{x})$, hence $x\mu_x = -x$.
		\item Replace $\tau$ by $\mu_{\pm x}$ in~\ref{itm:special-ii}, and use~\ref{itm:special-iii}.
		\item This is a consequence of Lemma~\ref{prop:mu}\ref{itm:muform3} and the definition of special.\qedhere
	\end{enumerate}
\end{proof}

\begin{lemma}\label{lem:specialsum}
	If $x,y\in X$ are units in a special local Moufang set, and $x\alpha_y$ is a unit, then
	\[x\mu_{x\alpha_y} = (-y)\alpha_{-x}\alpha_{x\mu_y}\alpha_{-y}\,.\]
\end{lemma}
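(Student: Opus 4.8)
The plan is to first strip away the part of the right‑hand side that telescopes. Since $\alpha_a\alpha_{-a}=\id$, composing the asserted equality on the right with the bijection $\alpha_y\,\alpha_{-(x\mu_y)}\,\alpha_x$ collapses its right‑hand side, because $(-y)\alpha_{-x}\alpha_{x\mu_y}\alpha_{-y}\cdot\alpha_y\alpha_{-(x\mu_y)}\alpha_x=(-y)\alpha_{-x}\alpha_x=-y$. As these factors are bijections, the statement is equivalent to the reduced identity
\[ x\,\mu_{x\alpha_y}\,\alpha_y\,\alpha_{-(x\mu_y)}\,\alpha_x=-y . \]
Before attacking it I would record two bookkeeping facts: first, $\alpha_{x\alpha_y}=\alpha_x\alpha_y$, since both sides lie in $U_\infty$ and send $0$ to $x\alpha_y$; and second, that the hypothesis ``$x\alpha_y$ is a unit'' is equivalent to $x\nsim-y$, which guarantees that every subscript appearing above is non-equivalent to $\infty$ and that the relevant $\mu$-maps are defined.

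Next I would expand the two $\mu$-maps in the reduced identity by the special $\mu$-formula $\mu_u=\alpha_u\,\alpha_{-(u\tau^{-1})}^\tau\,\alpha_u$ of Lemma~\ref{prob:specialmu}\ref{itm:special-ii}, which holds for any choice of $\mu$-map $\tau$: applied to $u=x\alpha_y$ it combines with $\alpha_{x\alpha_y}=\alpha_x\alpha_y$, and applied to $u=y$ it rewrites the point $x\mu_y$. Substituting and evaluating the resulting word on $x$ reduces everything to the action of the $U_0$-conjugates $\alpha_\bullet^\tau$, which I would resolve using Proposition~\ref{prop:sumform} (the description of how $\tau$ transports sums) together with the special identities $(-y)\mu_x=-(y\mu_x)$ and $x\mu_x=x\mu_{-x}=-x$ from Lemma~\ref{prob:specialmu}\ref{itm:special-i} and \ref{itm:special-iii}. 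The endgame mirrors the proof of Proposition~\ref{prop:quainv}: after the rearrangement one checks, via \ref{itm:LM3}, that the element obtained lies in $U_\infty$, and then that it fixes $0$, so that it is trivial by sharp transitivity \ref{itm:LM2}; equivalently, one verifies directly that the left-hand side of the reduced identity sends $x$ to $-y$.

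The main obstacle is the calculation itself rather than any conceptual point, and two pitfalls need care. First, the $U_0$-conjugates must be eliminated by Proposition~\ref{prop:sumform} in a way that feeds in $\mu_y$ (through the chosen $\tau$) rather than reproducing $\mu_{x\alpha_y}$, so that the computation does not become circular; this is precisely why it is convenient to keep the telescoped form above and to expand $\mu_{x\alpha_y}$ with a well-chosen $\tau$ already in hand. Second, one must not assume the root groups are abelian, so all products in $U_\infty$ have to be handled as an associative but possibly non-commutative group, with the special identities $\til x=-x$ being exactly what keeps the signs and quasi-inverses consistent. A useful consistency check that also guides the bookkeeping is the model $\M(R)$: there $\mu_{[1,r]}$ acts as $t\mapsto-r^2/t$, so $x\mu_{x\alpha_y}=-(x+y)^2/x$ and $x\mu_y=-y^2/x$, and the reduced identity becomes the elementary computation $-(x+y)^2/x+y+y^2/x+x=-y$.
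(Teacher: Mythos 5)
Your reduction to $x\,\mu_{x\alpha_y}\,\alpha_y\,\alpha_{-(x\mu_y)}\,\alpha_x=-y$ is correct (it is essentially the paper's final ``rearranging'' step performed first), your bookkeeping facts are right, and you have located the two tools the paper's proof actually runs on, namely Proposition~\ref{prop:sumform} and the specialness identities of Lemma~\ref{prob:specialmu}. But the heart of the argument is never executed: between the reduction and the endgame everything is a promissory note (``I would expand\dots which I would resolve\dots''), and the one concrete route you sketch is not guaranteed to go through. If you expand $\mu_{x\alpha_y}=\alpha_{x\alpha_y}\,\alpha^\tau_{-((x\alpha_y)\tau^{-1})}\,\alpha_{x\alpha_y}$ and evaluate at $x$, the conjugate must be resolved at the point $x\alpha_{x\alpha_y}$, and Proposition~\ref{prop:sumform} only applies when the relevant points are units and non-equivalent; $x\alpha_{x\alpha_y}$ need not be a unit under the hypotheses --- in $\M(R)$ with $2\in R^\times$, take $x=1$ and $y=-2$: then $x$, $y$ and $x\alpha_y=-1$ are units, yet $x\alpha_{x\alpha_y}=0$. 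Your hedge about expanding ``with a well-chosen $\tau$ already in hand'' gestures at a fix but never specifies one, so the claim that only ``the calculation itself'' remains is not substantiated; as stated, the plan can stall exactly where the work is.

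The missing idea, which makes the computation genuinely routine, is to apply Proposition~\ref{prop:sumform} just once, to the pair $(x\alpha_y,\,y)$ --- admissible since $x\alpha_y\nsim y$ follows from $x\nsim 0$. Writing $z=(x\alpha_y)\tau^{-1}\alpha_{-(y\tau^{-1})}\tau$, the proposition together with specialness gives the two evaluations $z=x\mu_y\alpha_{-y}$ (using $x\alpha_y\alpha_{-y}=x$) and $\til z=(-x)\,\mu_{x\alpha_y}\,\alpha_{-(x\alpha_y)}$ (using $y\alpha_{-(x\alpha_y)}=-x$), and specialness in the form $\til z=-z$ for the unit $z$ welds them together; pulling the sign through $\mu_{x\alpha_y}$ by Lemma~\ref{prob:specialmu}\ref{itm:special-i} and running your telescoping backwards then yields the stated formula in three lines of rearrangement inside $U_\infty$. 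Note also that your proposed endgame (showing via \ref{itm:LM3} that an element lies in $U_\infty$ and fixes $0$, as in Proposition~\ref{prop:quainv}) is heavier machinery than this lemma needs: the only appeal to sharp transitivity in the paper's proof is the passage from $0\cdot\alpha_{x\mu_y}=0\cdot\alpha_x\alpha_y\alpha_{x\mu_{x\alpha_y}}\alpha_y$ to the corresponding identity of elements of $U_\infty$.
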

\begin{proof}
	Let $x' = x\alpha_y$; note that $x'\nsim y$ since $x\nsim 0$.
    Now let $z = x'\tau^{-1}\alpha_{-y\tau^{-1}}\tau$. By Proposition~\ref{prop:sumform} and by specialness, we have
	\begin{align*}
		x'\alpha_{-y}\mu_{y}\alpha_{-y} &= z = -\til z = -(y\alpha_{-x'}\mu_{x'}\alpha_{-x'}) \\
		 &= 0\cdot(\alpha_{y\alpha_{-x'}\mu_{x'}}\alpha_{-x'})^{-1} = x'\alpha_{-(y\alpha_{-x'}\mu_{x'})} \\
		 &= x'\alpha_{(-(y\alpha_{-x'}))\mu_{x'})} = x'\alpha_{x'\alpha_{-y}\mu_{x'}}
	\end{align*}
	and hence $x'\alpha_{-y}\mu_{y} =x'\alpha_{x'\alpha_{-y}\mu_{x'}}\alpha_y$.
    Replacing $x' = x\alpha_y$, we get
	\[x\mu_y =x\alpha_y\alpha_{x\mu_{x\alpha_y}}\alpha_y\quad\text{ so }\quad0\cdot\alpha_{x\mu_y} = 0\cdot\alpha_x\alpha_y\alpha_{x\mu_{x\alpha_y}}\alpha_y\,,\]
	hence $\alpha_{x\mu_y} = \alpha_x\alpha_y\alpha_{x\mu_{x\alpha_y}}\alpha_y$. By rearranging we get $\alpha_{x\mu_{x\alpha_y}}=\alpha_{-y}\alpha_{-x}\alpha_{x\mu_y}\alpha_{-y}$, which gives the desired formula after applying it to $0$.
\end{proof}

\begin{notation}
    For each $x \in U$, we write $x \cdot 2 := 0 \alpha_x^2 = x \alpha_x$. Hence $\alpha_{-(x\cdot 2)} = (\alpha_{x\cdot 2})^{-1} = \alpha_x^{-2} = \alpha_{-x}^{2}$, so $-(x\cdot 2) = (-x)\cdot 2$.
\end{notation}
\begin{lemma}\label{lem:2div}
	Let $x\in X$ be a unit in a special local Moufang set, and assume that also $x \cdot 2$ is a unit. Then
	\begin{enumerate}[itemsep=0.4ex]
		\item there exists a unique $y\in X$ such that $y\cdot2 = x$; \label{itm:2div-i}
		\item $x\tau\cdot2\nsim 0$; \label{itm:2div-ii}
		\item $(x\cdot2)\tau\cdot2 = x\tau$. \label{itm:2div-iii}
	\end{enumerate}
\end{lemma}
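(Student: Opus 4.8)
The plan is to reduce all three statements to one additive dictionary together with a single genuine $\mu$-map identity, which I expect to be the crux. First I would record the additive structure: the map $x\mapsto\alpha_x$ identifies $X\setminus\overline{\infty}$ with $U_\infty$, so $\alpha_{x\cdot 2}=\alpha_x^2$, and since $x\cdot 2 = 0\alpha_x^2$ is never equivalent to $\infty$, the point $x\cdot 2$ is a unit iff $\overline{0}\,\overline{\alpha_x}^2\neq\overline{0}$. Comparing this with $x\nsim -x\iff\overline{0}\,\overline{\alpha_x}\neq\overline{0}\,\overline{\alpha_x}^{-1}$ (apply $\overline{\alpha_x}$, using Proposition~\ref{pr:unit} to identify units) yields the key equivalence
\[ x\cdot 2\text{ is a unit}\iff x\nsim -x, \]
valid for every unit $x$.

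For part \ref{itm:2div-ii} this is essentially immediate. Since $x$ is a unit iff $x\tau$ is a unit, I may apply the equivalence to $x\tau$; using specialness in the form $-(x\tau)=(-x)\tau$ and that $\tau^{-1}$ preserves $\lsim$,
\[ x\tau\cdot 2\nsim 0 \iff x\tau\nsim -(x\tau)=(-x)\tau \iff x\nsim -x \iff x\cdot 2\text{ is a unit}, \]
and the last condition holds by hypothesis (here $x\tau\cdot 2\nsim\infty$ automatically, so $x\tau\cdot 2\nsim 0$ is the same as $x\tau\cdot 2$ being a unit).

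Part \ref{itm:2div-iii} is the heart of the matter. Since $x\cdot 2$ is a unit we have $x\nsim -x$, so I may feed the pair $(x,-x)$ into Proposition~\ref{prop:sumform}; in the special case $\til y=-y$ this yields, after rewriting $\mu_{-x}=\alpha_{-x}\alpha_{x\tau^{-1}}^\tau\alpha_{-x}$ via Lemma~\ref{prob:specialmu}\ref{itm:special-ii},
\[ z := (x\cdot 2)\,\mu_{-x}\,\alpha_x = x\,\gamma_{x\tau^{-1}} = \bigl((x\tau^{-1})\cdot 2\bigr)\tau, \qquad \til z = -z. \]
Substituting $x\mapsto x\tau$ turns $z$ into $(x\cdot 2)\tau$ and turns the single identity $z\cdot 2=x$ into exactly $(x\cdot 2)\tau\cdot 2=x\tau$; so everything comes down to proving $z\cdot 2=x$, i.e.\ $\alpha_z^2=\alpha_x$. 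As a concrete handle I would apply Lemma~\ref{lem:specialsum} to the pair $(x\cdot 2,-x)$ (legitimate since $(x\cdot 2)\alpha_{-x}=x$ is a unit), which after simplifying $\alpha_{(x\cdot 2)\mu_{-x}}=\alpha_z\alpha_{-x}$ gives the clean relation $(x\cdot 2)\mu_x=(-x)\alpha_z$. The remaining task is to close the loop: this relation together with $z=x\alpha_{(x\cdot 2)\mu_x}$ shows $\alpha_z^2=\alpha_x$ precisely when $\bigl((x\cdot 2)\mu_x\bigr)\cdot 2=-x$, which is the statement that $\mu_x$ interchanges doubling and halving. This compatibility is the main obstacle, and I expect it to require a direct computation tracking the image of $0$ through the word for $\mu_x$ and invoking the special identities of Lemma~\ref{prob:specialmu}, rather than any symmetry trick — indeed, unlike the Hua maps (which commute with doubling because they act as automorphisms of $U_\infty$), the map $\tau$ offers no such structure.

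Finally, part \ref{itm:2div-i} follows formally from \ref{itm:2div-iii}. For uniqueness, any $y$ with $y\cdot 2=x$ is itself a unit (it is $\nsim\infty$ since $\alpha_y$ must exist, and $\nsim 0$ since otherwise $x=y\cdot 2\sim 0$), and $y\cdot 2=x$ is a unit, so \ref{itm:2div-iii} applies to $y$ and forces $y\tau=(y\cdot 2)\tau\cdot 2=x\tau\cdot 2$, hence $y=(x\tau\cdot 2)\tau^{-1}$ is uniquely determined. For existence I would set $y:=(x\tau\cdot 2)\tau^{-1}$ and invoke \ref{itm:2div-iii} with the $\mu$-map $\tau^{-1}$ in place of $\tau$ (the computation above is symmetric under $\tau\leftrightarrow\tau^{-1}$, and $(x\tau)\cdot 2$ is a unit by \ref{itm:2div-ii}); applied to the unit $x\tau$ this reads $\bigl((x\tau)\cdot 2\bigr)\tau^{-1}\cdot 2=(x\tau)\tau^{-1}=x$, i.e.\ $y\cdot 2=x$.
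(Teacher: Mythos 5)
Your parts (i) and (ii) are sound --- indeed your proof of (ii), via the equivalence ``$x\cdot 2$ is a unit $\iff x\nsim -x$'' combined with specialness and the fact that $\tau$ preserves $\lsim$, is slicker and more elementary than the paper's --- and your reduction of (iii) to the single identity $\bigl((x\cdot 2)\mu_x\bigr)\cdot 2=-x$ is correct as far as it goes. But that identity is the actual heart of the lemma, and you leave it unproven: you explicitly defer it as ``the main obstacle'' expected to need ``a direct computation''. Worse, what you have genuinely established at that point (via Lemma~\ref{lem:specialsum} applied to the pair $(x\cdot 2,-x)$) is only the \emph{equivalence} $z\cdot 2=x\iff\bigl((x\cdot 2)\mu_x\bigr)\cdot 2=-x$, which is circular without an independent input; and since your proofs of both (i) and (iii) feed through this identity, the proposal has a genuine gap at its core.

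The missing input is short, and it is precisely where the paper's proof \emph{starts}: apply Lemma~\ref{prob:specialmu}\ref{itm:special-v}, $\mu_{-x}=\alpha_x\mu_{-x}\alpha_x\mu_{-x}\alpha_x$, to the point $(-x)\cdot 2=0\alpha_{-x}^2$. Using $(-x)\mu_{-x}=x$ (from parts \ref{itm:special-i} and \ref{itm:special-iii} of that lemma) this gives
\[ -\bigl((x\cdot 2)\mu_{-x}\bigr)=\bigl((-x)\cdot 2\bigr)\mu_{-x}=(-x)\mu_{-x}\alpha_x\mu_{-x}\alpha_x=(x\cdot 2)\mu_{-x}\alpha_x\,, \]
so with $w=(x\cdot 2)\mu_{-x}$ one gets $\alpha_w^{-1}=\alpha_w\alpha_x$, i.e.\ $w\cdot 2=-x$; substituting $x\mapsto -x$ and using $(-y)\mu_x=-(y\mu_x)$ yields your $\mu_x$-version. (Only sharp transitivity of $U_\infty$ is used --- no abelianness.) With this inserted your argument closes, though note the paper then finishes (ii) and (iii) by a different device than your route through Proposition~\ref{prop:sumform}: it observes that $\mu_{-x}\tau$ and $\mu_x\tau$ are Hua maps and hence commute with doubling by Lemma~\ref{prop:huaAut}, giving $(x\tau)\cdot 2=\bigl((-x)\cdot 2\bigr)\mu_{-x}\tau\nsim 0$ and $\bigl((x\cdot 2)\tau\bigr)\cdot 2=\bigl((x\cdot 2)\mu_{-x}\cdot 2\bigr)\mu_x\tau=(-x)\mu_x\tau=x\tau$ directly. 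Your closing remark that ``$\tau$ offers no such structure'' identifies exactly the difficulty that the paper circumvents by factoring $\tau=\mu_{-x}\cdot(\mu_x\tau)$ so that Hua maps do appear.
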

\begin{proof}
	We will prove these three facts simultaneously.
    By \ref{itm:special-i} and~\ref{itm:special-v} from Lemma~\ref{prob:specialmu}, we have
	\begin{align*}
		-((x\cdot2)\mu_{-x}) &= ((-x)\cdot2)\mu_{-x} \\
			&= (-x)\alpha_{-x}\alpha_x\mu_{-x}\alpha_x\mu_{-x}\alpha_x \\
			&= (-x)\mu_{-x}\alpha_x\mu_{-x}\alpha_x \\
			&= x\alpha_x\mu_{-x}\alpha_x \\
			&= (x\cdot2)\mu_{-x}\alpha_x\,,
	\end{align*}
	and hence $\alpha_{-((x\cdot2)\mu_{-x})}=\alpha_{(x\cdot2)\mu_{-x}}\alpha_x$. Rearranging and applying to $0$ gives
	\[ (x\cdot2)\mu_{-x} \cdot 2 = -x\,.\]
	Now $\mu_{-x}\tau$ and $\mu_x\tau$ are Hua maps, so by Lemma~\ref{prop:huaAut} we get
	\[(x\tau)\cdot 2 = (-x)\mu_{-x}\tau \cdot 2 = (-x\cdot2)\mu_{-x}\tau\nsim 0\]
	and
	\[((x\cdot2)\tau)\cdot2 = ((x\cdot2)\mu_{-x}\mu_x\tau)\cdot2 = ((x\cdot2)\mu_{-x}\cdot2)\mu_x\tau = (-x)\mu_x\tau = x\tau\,.\]
	This proves parts~\ref{itm:2div-ii} and~\ref{itm:2div-iii}.

    Now let $y:=((-x)\cdot2)\mu_x$.
    Then by~\ref{itm:2div-iii}, $y\cdot2 = (-x)\mu_x = x$, which proves the existence in part~\ref{itm:2div-i}.
    It remains to show the uniqueness. So suppose there is another $z$ such that $z\cdot 2 = x$; then
	\[(-x)\cdot2 = (x\mu_{-x})\cdot2 = (z\cdot2)\mu_{-x}\cdot2 = z\mu_{-x}\,,\]
	by applying~\ref{itm:2div-iii} on $z$. Hence $z = ((-x)\cdot2)\mu_x = y$, which proves uniqueness.
\end{proof}

If this holds for all units of a special local Moufang set and the root groups are abelian, we get uniquely $2$-divisible root groups.

\begin{definition}
	A group $U$ is (\emph{uniquely}) \emph{$2$-divisible} if for every $g\in U$ there is a (unique) $h\in U$ such that $h^2=g$. If this $h$ is unique, we denote it by $g/2$.
\end{definition}

\begin{lemma}\label{prop:2divglobal}
	Let $\M$ be a special local Moufang set with $U_\infty$ abelian, such that each unit $x$, also $x\cdot2$ is a unit. Then $U_\infty$ is uniquely $2$-divisible.
\end{lemma}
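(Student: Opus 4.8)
The plan is to show that the squaring map $s\colon U_\infty\to U_\infty,\ h\mapsto h^2$, which is a group homomorphism since $U_\infty$ is abelian, is a bijection: being \emph{$2$-divisible} is exactly surjectivity of $s$, and \emph{unique} $2$-divisibility additionally requires $s$ to be injective. Every element of $U_\infty$ is $\alpha_x$ for a unique $x\nsim\infty$, and I would split according to whether $\alpha_x$ lies in $U_\infty^\times$ (equivalently $x$ a unit) or in $U_\infty^\circ$ (equivalently $x\sim 0$); recall a unit exists since $\abs{\overline X}>2$.

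For surjectivity, the unit case is immediate: if $x$ is a unit then $x\cdot 2$ is a unit by hypothesis, so Lemma~\ref{lem:2div}\ref{itm:2div-i} supplies a $y$ with $y\cdot 2=x$, i.e.\ $\alpha_y^2=\alpha_x$. For $g\in U_\infty^\circ$ I would pick a unit $e$; since $\overline g=\id$, the product $g\alpha_e$ has the same (nontrivial) image as $\alpha_e$ and so is a unit element $\alpha_{e'}$. Applying the unit case to both $e$ and $e'$ gives square roots $\alpha_a,\alpha_{a'}$ of $\alpha_e,\alpha_{e'}$, and then commutativity yields $g=\alpha_{e'}\alpha_e^{-1}=(\alpha_{a'}\alpha_a^{-1})^2$. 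Hence $s$ is onto.

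For injectivity it suffices to show the $2$-torsion subgroup $T=\{t\in U_\infty:t^2=\id\}$ is trivial, since $h^2=k^2$ forces $(hk^{-1})^2=\id$. No unit can lie in $T\setminus\{\id\}$: a unit $t=\alpha_x$ with $\alpha_x^2=\id$ would give $x\cdot 2=0\sim 0$, contradicting that $x\cdot 2$ is a unit. The remaining, and genuinely delicate, case is a nontrivial $t\in U_\infty^\circ$ with $t^2=\id$; this is the main obstacle, because Lemma~\ref{lem:2div} speaks only about units and gives no direct control over $U_\infty^\circ$. The idea I would use is to perturb a unit by $t$: fix a unit $e$ with square root $\alpha_a$ (so $a\cdot 2=e$); commutativity and $t^2=\id$ give $(\alpha_a t)^2=\alpha_a^2=\alpha_e$, and since $\overline{\alpha_a t}=\overline{\alpha_a}\neq\id$, we may write $\alpha_a t=\alpha_b$ for a unit $b$ with $b\cdot 2=e$ as well. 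Now the \emph{uniqueness} clause of Lemma~\ref{lem:2div}\ref{itm:2div-i}, applied to $e$, forces $a=b$, whence $\alpha_a=\alpha_a t$ and $t=\id$.

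Putting the two halves together, $s$ is surjective and injective, so every element of $U_\infty$ has a unique square root; that is, $U_\infty$ is uniquely $2$-divisible. Beyond the $U_\infty^\circ$-torsion argument, the only remaining work is the repeated but routine verification, via Proposition~\ref{pr:unit}, that the auxiliary elements ($g\alpha_e$, $\alpha_a t$, and the various square roots) are again units, which is precisely what licenses each appeal to Lemma~\ref{lem:2div}.
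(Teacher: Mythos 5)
Your proof is correct and follows essentially the same route as the paper's: both rest on the unit case of Lemma~\ref{lem:2div}\ref{itm:2div-i}, handle elements of $U_\infty^\circ$ by writing them as a product of two unit elements (perturbing by a fixed unit $e$, using commutativity), and reduce uniqueness to the uniqueness of square roots of a unit element such as $\alpha_e$. Your packaging as bijectivity of the squaring homomorphism (surjectivity plus trivial $2$-torsion) is only a cosmetic reorganization of the paper's element-wise existence-and-uniqueness argument.
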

\begin{proof}
    Lemma~\ref{lem:2div}\ref{itm:2div-i} already shows that, if $x$ is a unit, there is a unique $y$ such that $\alpha_y^2=\alpha_x$;
    therefore, it only remains to check the unique $2$-divisibility for non-units.
    So suppose that $x$ is not a unit. Take any unit $e$; then $\alpha_x = \alpha_{x\alpha_{-e}}\alpha_{e}$.
    Now $x\alpha_{-e}$ and $e$ are units, so both $\alpha_{x\alpha_{-e}}$ and $\alpha_e$ are uniquely $2$-divisible,
    say with $\alpha_y^2 = \alpha_{x\alpha_{-e}}$ and $\alpha_z^2 = \alpha_e$. Since $U_\infty$ is abelian, we get
	\[(\alpha_y\alpha_z)^2 = \alpha_y^2\alpha_z^2 = \alpha_{x\alpha_{-e}}\alpha_{e} = \alpha_x\,.\]
	To show uniqueness, suppose there are two elements $u,u' \in U_\infty$ with $u^2 = u'^2 = \alpha_x$. Then
	\[(\alpha_y^{-1}u)^2 = \alpha_y^{-2}\alpha_x = \alpha_y^{-2}\alpha_y^2\alpha_z^2 = \alpha_e\,,\]
	and similarly $(\alpha_y^{-1}u')^2 = \alpha_e$. By the uniqueness for units, we get $\alpha_y^{-1}u' = \alpha_y^{-1}u$, so $u=u'$.
\end{proof}

Another property that holds when we have a special local Moufang set with abelian root groups is the fact that $\mu$-maps are automatically involutions.

\begin{lemma}\label{lem:mu-involution}
	Let $\M$ be a special local Moufang set with $U_\infty$ abelian and $\abs{\overline{X}}>3$. For any unit $x$, we have $h_x = h_{-x}$ and $\mu_x = \mu_{-x}$. Hence $\mu_x^2=\id$ for all units $x$.
\end{lemma}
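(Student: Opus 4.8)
The plan is to reduce all three assertions to the single identity $\mu_x^2 = \id$. Since $h_x = \tau\mu_x$ by definition and $\tau$ is invertible, the equality $h_x = h_{-x}$ is equivalent to $\mu_x = \mu_{-x}$; and by Lemma~\ref{prop:mu} we have $\mu_{-x} = \mu_x^{-1}$, so $\mu_x = \mu_{-x}$ is in turn equivalent to $\mu_x^2 = \id$. Thus it suffices to prove that $g := \mu_x^2$ is the identity. I first record that $g = \mu_x\mu_x \in H = G_{0,\infty}$ by Theorem~\ref{thm:hua2pt}, so $g$ fixes both $0$ and $\infty$; moreover, since the identity $\alpha_w^{h} = \alpha_{wh}$ of Lemma~\ref{prop:huaAut} holds for Hua maps and is preserved under products and inverses, it holds for every element of $H$, so $g$ acts on $X\setminus\overline\infty$ compatibly with the abelian operation, with $\alpha_w^g = \alpha_{wg}$ for all $w\nsim\infty$.

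Next I would check that $g$ fixes $x$. Applying Lemma~\ref{prob:specialmu}\ref{itm:special-iv} with both choices of sign yields $\alpha_x^{\mu_x} = \alpha_x^{\mu_{-x}}$; since $\mu_{-x} = \mu_x^{-1}$, this rearranges to $\alpha_x^{\mu_x^2} = \alpha_x$, that is, $\alpha_x^g = \alpha_x$ and hence $xg = x$.

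The heart of the argument is to upgrade this to the statement that $g$ fixes \emph{every} unit, equivalently that $g$ centralizes $U_\infty$. Because $\abs{\overline X}>3$, there exists a unit $y$ with $y\nsim x$, and this is exactly where the hypothesis on $\abs{\overline X}$ is used: it furnishes a second unit class on which to test $g$. I would feed the pair $(x,y)$ into the symmetric sum formula of Proposition~\ref{prop:sumform} (which in the special case reads $z = x\alpha_{-y}\mu_y\alpha_{-y}$ and $-z = y\alpha_{-x}\mu_x\alpha_{-x}$) together with Lemma~\ref{lem:specialsum}, and exploit the commutativity of $U_\infty$ to transport the relation $xg = x$ across the $\mu$-maps and conclude $yg = y$. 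Since the $\alpha_e$ with $e$ a unit generate $U_\infty$ (every $\alpha_w$ factors as $\alpha_{w'}\alpha_e$ with $w',e$ units, as in the proof of Proposition~\ref{prop:huaincl}), fixing all units forces $g$ to centralize $U_\infty$. I expect this propagation step to be the main obstacle: it is the genuinely special-and-abelian computation, and it must be carried out \emph{without} assuming that $x\cdot 2$ is a unit, so that the residue characteristic $2$ case is also covered.

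Finally I would close by a centralizer computation. Once $g$ centralizes $U_\infty$, I note that $g$ commutes with $\mu_x$ (as $g = \mu_x^2$) and that $U_0 = U_\infty^{\mu_x}$ by \ref{itm:LM3}; hence for $c = d^{\mu_x}\in U_0$ with $d\in U_\infty$ we get $c^g = d^{\mu_x g} = d^{g\mu_x} = (d^g)^{\mu_x} = d^{\mu_x} = c$, using $d^g = d$, so $g$ centralizes $U_0$ as well. Therefore $g$ centralizes $G = \langle U_0,U_\infty\rangle$ by Proposition~\ref{pr:UxUy}. Since $G$ acts transitively on $X$ (Proposition~\ref{prop:twotrans}) and $g$ fixes $0$, for any $w = 0h$ with $h\in G$ we obtain $wg = 0hg = 0gh = 0h = w$; thus $g$ fixes every point and, by faithfulness of the action, $g = \id$. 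This gives $\mu_x^2 = \id$, whence $\mu_x = \mu_{-x}$ and $h_x = h_{-x}$, as required.
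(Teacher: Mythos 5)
Your overall skeleton is sound: the reduction $h_x=h_{-x}\iff\mu_x=\mu_{-x}\iff\mu_x^2=\id$ is correct, the observation that $g:=\mu_x^2$ fixes $x$ (via the two signs in Lemma~\ref{prob:specialmu}\ref{itm:special-iv}) is correct, and the closing argument (fixing all units $\Rightarrow$ centralizing $U_\infty$ $\Rightarrow$ centralizing $U_0=U_\infty^{\mu_x}$ $\Rightarrow$ central in $G$ $\Rightarrow$ trivial, by transitivity and faithfulness) is also correct. But the middle step --- propagating $xg=x$ to $yg=y$ for \emph{every} unit $y$ --- is not an obstacle you can defer: it \emph{is} the lemma. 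Showing $y\mu_x^2=y$, i.e.\ $y\mu_x=y\mu_{-x}$, for all units $y$ is exactly the content the paper's entire proof is devoted to: for units $y\nsim-x$ it runs a direct chain computation starting from $(-y)\mu_{-x}=(-y)\alpha_{-x}\tau^{-1}\alpha_{x\tau^{-1}}\tau\alpha_{-x}$ (Lemma~\ref{prob:specialmu}\ref{itm:special-ii}), alternating specialness (to move a minus sign across $\tau^{\pm1}$, which needs $y\alpha_x$ to be a unit --- whence the hypothesis $y\nsim-x$) with commutativity of $U_\infty$ (to move it across $\alpha$'s), ending in $(-y)\mu_{-x}=-(y\mu_x)$; the remaining classes $y\sim-x$, $y\sim0$, $y\sim\infty$ are then handled by translation tricks using $\abs{\overline{X}}>3$, Lemma~\ref{prop:huaAut} and Lemma~\ref{lem:huaprop}\ref{itm:hua-conj-tau}. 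You have replaced this computation by a hope.

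Moreover, the specific route you gesture at does not obviously deliver it. Proposition~\ref{prop:sumform} applied to the pair $(x,y)$ gives (after specialness and commutativity) $y\alpha_{-x}\mu_x\alpha_{-x}=(-x)\alpha_y\mu_y\alpha_y$, and applied to $(y,x)$ it gives the negative of the same identity, so the two are equivalent and carry no new information; applied to $(-x,y)$ it gives $y\alpha_{x}\mu_{-x}\alpha_{x}=x\alpha_y\mu_y\alpha_y$, whose right-hand side differs from the previous one by the sign of $x$, so comparing them to extract $y\mu_x=y\mu_{-x}$ requires precisely the kind of hands-on manipulation with specialness and commutativity that the paper performs --- it is not a formal consequence of citing Proposition~\ref{prop:sumform} and Lemma~\ref{lem:specialsum}. (Two smaller remarks: your skeleton only needs the propagation for units, which would indeed spare you the paper's cases $y\sim0$ and $y\sim\infty$, a genuine structural economy if the core computation were supplied; and Lemma~\ref{lem:specialsum} is itself proved from Proposition~\ref{prop:sumform} by exactly such a computation, so it is a reasonable template --- but you must actually carry it out, not merely name it.)
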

\begin{proof}
	We will show $h_x = h_{-x}$ by proving $yh_x = yh_{-x}$ for all $y\in X$. First assume $y$ is a unit with $y\nsim -x$, then
	\begin{align*}
		(-y)\mu_{-x} &= (-y)\alpha_{-x}\tau^{-1}\alpha_{x\tau^{-1}}\tau\alpha_{-x} \\
		&= (-y\alpha_x)\tau^{-1}\alpha_{x\tau^{-1}}\tau\alpha_{-x} \\
		&= (-y\alpha_x\tau^{-1})\alpha_{x\tau^{-1}}\tau\alpha_{-x} &\text{(as $-y\alpha_x$ is a unit)} \\
		&= (-y\alpha_x\tau^{-1}\alpha_{-x\tau^{-1}}\tau)\alpha_{-x} \\
		&= -y\alpha_x\tau^{-1}\alpha_{-x\tau^{-1}}\tau\alpha_x \\
		&= -y\mu_x\;,
	\end{align*}
	where we have repeatedly used commutativity of $U_\infty$ by
	\[(-y')\alpha_{-x'} = 0\alpha_{y'}^{-1}\alpha_{x'}^{-1} = 0(\alpha_{y'}\alpha_{x'})^{-1} = -(y'\alpha_{x'})\;.\]
	From this, we get $y\mu_x = -(-y\mu_x) = -(-y)\mu_{-x} = y\mu_{-x}$, so for any unit $y$ with $y\nsim -x$ we have $yh_x = y\tau\mu_x = y\tau\mu_{-x} = yh_{-x}$.
	
	Next, we look at the case where $y\sim -x$. Take $e$ a unit such that $e\nsim -x$ (such $e$ exists as $\abs{\overline{X}}>3$), then $y\alpha_{-e}$ is a unit and $y\alpha_{-e}\nsim -x$, so we can use the previous case to get
	\begin{align*}
		yh_x &= y\alpha_{-e}\alpha_eh_x \\
		&= (y\alpha_{-e})h_x\alpha_{eh_x} &\text{(by Lemma~\ref{prop:huaAut})} \\
		&= (y\alpha_{-e})h_{-x}\alpha_{eh_{-x}} &\text{(by the previous)} \\
		&= y\alpha_{-e}\alpha_eh_{-x} &\text{(by Lemma~\ref{prop:huaAut})} \\
		&= yh_{-x}\;.
	\end{align*}
	Hence, we know $yh_x = yh_{-x}$ for all units $y$.
	
	Thirdly, we look at the case where $y\sim 0$. Take any unit $e$, then $y\alpha_{-e}$ is a unit, so we can repeat the previous argument to get $yh_x = yh_{-x}$.

	Finally, we need to cover the case $y\sim\infty$. In this case,
	\begin{align*}
		yh_x &= y\tau h_x^{\tau} \tau^{-1} \\
		     &= y\tau h_{-x\tau} \tau^{-1} &\text{(by Lemma~\ref{lem:huaprop}\ref{itm:hua-conj-tau})} \\
		     &= y\tau h_{x\tau} \tau^{-1} &\text{(by the previous case with $y\tau$ in place of $y$)} \\
		     &= y\tau h_{-x}^\tau \tau^{-1} &\text{(by Lemma~\ref{lem:huaprop}\ref{itm:hua-conj-tau})} \\
		     &= y h_{-x}\;.
	\end{align*}
	We now know that $yh_x = yh_{-x}$ for all $y\in X$, so $h_x = h_{-x}$. It immediately follows that $\mu_x = \tau^{-1}h_x = \tau^{-1}h_{-x} = \mu_{-x}$.
\end{proof}

To finish this section, we observe that our main example, $\M(R)$ for a local ring $R$, is indeed a special local Moufang set.

\begin{example}
	We have already mentioned that $[1,r]\tau = [1,r]\tau^{-1} = [1,-r^{-1}]$ for $r\in R^\times$, and $-[1,r]=[1,-r]$. This means that
	\[\til[1,r] = \bigl(-([1,r]\tau^{-1})\bigr)\tau = [1,r^{-1}]\tau = [1,-r] = -[1,r]\,,\]
	for any $r\in R^\times$, so $\M(R)$ is special.
\end{example}

\subsection{Constructing a ring from a special local Moufang set}

We have seen that $\M(R)$ is an example of a special local Moufang set.
It is natural to ask what conditions can be put on a local Moufang set to ensure that it is equal to $\M(R)$ for some local ring~$R$.
With some additional assumptions, it is possible to recover the ring structure from the local Moufang set, at least provided the characteristic of the residue field is different from $2$.
We will use a method similar to the related result for Moufang sets \cite[\S 6]{MR2221120}.

\begin{construction}\label{constr:ring}
	Suppose that $\M$ is a local Moufang set satisfying the following conditions:
	\begin{enumerate}[label=\textnormal{(R\arabic*)}]
		\item $\M$ is special;\label{itm:R1}
		\item $U_\infty$ is abelian;\label{itm:R2}
		\item the Hua subgroup $H$ is abelian;\label{itm:R3}
		\item if $x$ is a unit, then so is $x\cdot2$.\label{itm:R4}
	\end{enumerate}
	We consider the set $R := X\setminus\overline{\infty}$, and define an addition and a multiplication. Note that there is a bijection between $R$ and $U_\infty$ by $x\mapsto\alpha_x$. We define the addition on $R$ as
	\[x+y:=0\cdot\alpha_x\alpha_y \]
    for all $x,y \in R$.
	This addition is simply the translation of the group composition in $U_\infty$ to the set $R$. Since $U_\infty$ is an abelian group, so is $(R,+)$. By Lemma~\ref{prop:2divglobal}, $U_\infty$ is uniquely $2$-divisible, hence also $(R,+)$ is uniquely $2$-divisible.

	To define the multiplication, we first choose a fixed unit $e \in R$, which will be the identity element of the multiplication.
    We will use the Hua maps corresponding to $\tau=\mu_e$, i.e.\@~we use $h_x = h_{x,\mu_e} = \mu_e \mu_x$. Remark that by \ref{itm:R1}, \ref{itm:R2} and \ref{itm:R4}, the conditions of Lemma~\ref{lem:mu-involution} are satisfied, so $\mu_e$ is an involution, and hence $\mu_eh_x = \mu_x$. For any $y\in R$, we now define a map $R_y \colon X\to X$ by
	\[ xR_y := \begin{cases} \begin{aligned}
		& x\cdot h_{e+y} - x\cdot h_y - x && \text{for $y$ and $y+e$ units}, & \text{(I)}\\
		& -x\cdot h_{-e+y}+x + x\cdot h_y && \text{for $y$ a unit and $y+e$ not a unit}, & \text{(II)}\\
		& x\cdot h_{2e+y}-x\cdot h_{e+y}-x\cdot h_{-2e}+x && \text{for $y$ not a unit}. & \text{(III)}
	\end{aligned} \end{cases} \]
    (We will verify in the proof of Lemma~\ref{le:monoid} below
    that all Hua maps occurring in this definition can indeed be defined.)
	Combining this with the unique $2$-divisibility, we can now define the multiplication on~$R$ by
    \[ xy:=xR_y/2 \]
    for all $x,y \in R$.
\end{construction}

We will now prove that this structure is a local ring, so for the remainder of the subsection, our local Moufang set satisfies \hyperref[itm:R1]{(R1)--(R4)}. First, we observe that the action of any Hua map on~$R$ is a group automorphism of $(R,+)$:
\begin{lemma}
	Let $h$ be a Hua map. Then for any $x,y\in R$, we have $(x+y)h = xh+yh$. In particular, $(-x)h = -xh$ and $(x/2)h = xh/2$.
\end{lemma}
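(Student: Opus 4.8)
The plan is to reduce everything to Lemma~\ref{prop:huaAut}, which already records that a single Hua map acts on $U_\infty$ by a conjugation compatible with its action on the index set, i.e.\ $\alpha_y^{h}=\alpha_{yh}$ for every $y\in R=X\setminus\overline{\infty}$, and hence $(\alpha_y\alpha_z)^{h}=\alpha_{yh}\alpha_{zh}$. Since the addition on $R$ is by definition the transport of the group law of $U_\infty$ along the bijection $y\mapsto\alpha_y$, additivity of $h$ should be essentially a restatement of this fact.

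Concretely, I would first note that a Hua map $h$ fixes both $0$ and $\infty$ and preserves equivalence, so $yh\in R$ whenever $y\in R$, which guarantees that the right-hand sides $yh+zh$ are defined. Then for $y,z\in R$ I would compute $(y+z)h=(0\cdot\alpha_y\alpha_z)h=0\cdot(\alpha_y\alpha_z h)$; inserting $h^{-1}$ in front and using $0h^{-1}=0$ turns this into $0\cdot(\alpha_y\alpha_z)^{h}$, at which point Lemma~\ref{prop:huaAut} gives $(\alpha_y\alpha_z)^{h}=\alpha_{yh}\alpha_{zh}$, and therefore $(y+z)h=0\cdot\alpha_{yh}\alpha_{zh}=yh+zh$. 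This is the entire content of the additivity statement.

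For the two special cases I would argue purely formally from additivity together with the group and $2$-divisibility structure of $(R,+)$ established in Construction~\ref{constr:ring}. Since $0$ is the additive identity and is fixed by $h$, applying $h$ to $0=x+(-x)$ yields $0=xh+(-x)h$, so $(-x)h=-xh$. Likewise, writing $x=(x/2)+(x/2)$ and applying $h$ gives $xh=(x/2)h+(x/2)h$, so $(x/2)h$ is a halving of $xh$ in $(R,+)$; by the unique $2$-divisibility of $(R,+)$ coming from Lemma~\ref{prop:2divglobal} this forces $(x/2)h=xh/2$.

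I do not expect any genuine obstacle here: the only points requiring care are bookkeeping ones, namely checking that $yh$ again lies in $R$ so that the sums are defined, and keeping the conjugation on the correct side, given that the action is on the right and $g^h=h^{-1}gh$. The substantive work has already been carried out in Lemma~\ref{prop:huaAut}.
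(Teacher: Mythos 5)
Your proposal is correct and follows essentially the same route as the paper: the paper also obtains additivity as Lemma~\ref{prop:huaAut} rewritten in terms of the addition on $R$, and derives $(-x)h=-xh$ and $(x/2)h=xh/2$ as immediate consequences using the unique $2$-divisibility from Lemma~\ref{prop:2divglobal}. You have merely made explicit the bookkeeping (inserting $h^{-1}$ via $0h^{-1}=0$, and checking $yh\in R$) that the paper's one-line proof leaves implicit.
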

\begin{proof}
	The first identity is Lemma~\ref{prop:huaAut} rewritten in terms of the addition in $R$.
    The second and the third are immediate consequences, using the unique $2$-divisibility.
\end{proof}

Note that if $h$ and $h'$ are Hua maps, we can define $x(h+h'):= xh+xh'$, and the map $h+h'$ is also a group endomorphism of $(R,+)$.
In particular, each $R_x$ is a group endomorphism of $(R,+)$.

\begin{lemma}\label{le:monoid}
	The structure $(R,\cdot)$ is a commutative monoid with identity element $e$.
\end{lemma}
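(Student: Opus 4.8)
The plan is to pass to the endomorphism ring of $(R,+)$ and let the commutativity of $H$ do essentially all of the work, so that the only genuinely computational axiom is the identity. First I would dispose of well-definedness. The additive structure descends to $\overline{X}\setminus\{\overline\infty\}$, since $\overline{\alpha_x}$ depends only on $\overline x$ by \ref{itm:LM2'}, making it an abelian group; in each of the cases (I)--(III) I then check that every subscript carried by a Hua map is a unit. For example, in case (II) the element $y+e=0\cdot\alpha_y\alpha_e$ satisfies $y+e\nsim\infty$, so ``$y+e$ not a unit'' forces $y+e\sim0$, whence $\overline{y-e}=-\overline{2e}\neq\overline0$ because $2e$ is a unit by \ref{itm:R4}; thus $y-e$ is a unit. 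Case (III) is analogous, using \ref{itm:R4} for $-2e$ and $2e+y$ and $y\sim0$ for $e+y$.

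Next I would set up the algebraic framework. For $y\in R$ write $\rho_y\colon x\mapsto xy=(xR_y)/2$. By the lemma preceding this one each $R_y$ is an endomorphism of $(R,+)$, and from the case definitions $R_y$ is an additive combination of the maps $h_c$ and $\pm\id$; hence $\rho_y$ lies in the subring $\mathcal A\subseteq\operatorname{End}(R,+)$ generated by $\id$, the halving map $x\mapsto x/2$, and all Hua maps. I claim $\mathcal A$ is commutative: the Hua maps commute pairwise because $H$ is abelian by \ref{itm:R3}, the halving map commutes with every additive endomorphism (since $(xh)/2=(x/2)h$), and $\id$ is central, so a subring generated by pairwise commuting elements is commutative. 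In particular the $\rho_y$ commute in $\mathcal A$, and since each $\rho_y$ maps $R$ into $R$, the operation $\cdot$ is a well-defined binary operation on $R$.

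The decisive observation is that, once one knows the single identity $e\rho_x=x$ for all $x$ (i.e.\ that $e$ is a \emph{left} identity), both commutativity and associativity follow purely formally from the commutativity of $\mathcal A$. Indeed, writing $x=e\rho_x$ gives $xy=e\rho_x\rho_y=e\rho_y\rho_x=yx$, and then $(xy)z=e\rho_x\rho_y\rho_z$, while $x(yz)=(yz)x=e\rho_y\rho_z\rho_x=e\rho_x\rho_y\rho_z$, so $(xy)z=x(yz)$; the right identity $xe=ex=x$ comes for free. Thus the whole statement reduces to proving $e$ is a left identity.

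Hence the crux, and the step I expect to be the main obstacle, is the identity $eR_x=x\cdot2$ for every $x\in R$, which I would verify case by case. Using $h_e=\mu_e\mu_e=\id$ and $h_{-c}=h_c$ from Lemma~\ref{lem:mu-involution}, each expression $eR_x$ simplifies, and the content is that the ``squaring'' map $q\colon c\mapsto e h_c=-(e\mu_c)$ polarizes correctly, i.e.\ $q(c+d)-q(c)-q(d)$ collapses to $(c\cdot2)$-type terms. Concretely, each of the three branches reduces to an instance of the sum formula Lemma~\ref{lem:specialsum}, which expresses $\mu_{x\alpha_y}$ (and hence $q$ on a sum) through the individual $\mu$-maps, combined with the specialness identities of Lemma~\ref{prob:specialmu} and the $2$-divisibility relations of Lemma~\ref{lem:2div}. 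Organizing this three-case polarization so that all branches collapse to $x\cdot2$ is the real work, and it is here that specialness and \ref{itm:R4} (ensuring $2e$ is invertible and letting one split non-units as sums of units) are indispensable.
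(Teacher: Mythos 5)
Your proposal is correct and follows essentially the same route as the paper: the commutativity of $H$ makes the multiplication operators commute (the paper phrases this as the identity $(xy)z=(xz)y$ rather than in $\operatorname{End}(R,+)$, a purely cosmetic repackaging), and everything then reduces to the left-identity computation $eR_x = x\cdot 2$, which the paper verifies in the same three cases using exactly the tools you name --- Lemma~\ref{lem:specialsum} in the translated form $x\mu_{x+y}=-y-x+x\mu_y-y$ together with Lemma~\ref{prob:specialmu} --- and your unit checks for the Hua-map subscripts in cases (II) and (III) likewise match the paper's inline verifications. The only part you leave unexecuted is that three-case computation itself, but it goes through exactly as you predict, so there is no gap in the plan.
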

\begin{proof}
	By the previous lemma and observation, we have $(x/2)R_y = xR_y/2$. Furthermore, since $H$ is abelian, every two maps $R_x$ and $R_y$ commute. Hence
	\[(xy)z = (xR_y/2)R_z/2 = xR_yR_z/4 = xR_zR_y/4 = (xR_z/2)R_y/2 = (xz)y \]
    for all $x,y,z \in R$.
	This proves the identity
    \begin{equation}\label{eq:xyz}
        (xy)z = (xz)y
    \end{equation}
    for all $x,y,z \in R$, which will be crucial for showing commutativity and associativity.

	Next, we show that $e$ is a left identity element for the multiplication, i.e.\@~that $ex=x$ for all $x\in R$, and we will show this in each of the three cases, using Lemmas~\ref{prob:specialmu} and \ref{lem:specialsum}, which translates to
\[x\mu_{x+y} = -y-x+x\mu_y-y\text{ for all $x,y \in R$ such that $x$, $y$ and $x+y$ are units.}\]
	\begin{enumerate}[label*=\textnormal{(\Roman*)},topsep=0pt,itemsep=0.5ex]
		\item In this case, $x$ and $x+e$ are units. We get
		\begin{align*}
			eR_x & = e\cdot h_{e+x} - e\cdot h_x - e = e\mu_e\mu_{e+x}-e\mu_e\mu_x-e = (-e)\mu_{e+x}-(-e)\mu_x-e \\
				&= -e\mu_{e+x}+e\mu_x-e  = -(-x-e+e\mu_x-x)+e\mu_x-e = 2x\,,
		\end{align*}
		so $ex = eR_x/2 = x$.
		\item In this case, $x$ is a unit and $x+e$ is not a unit.
        Hence $x-e$ is a unit, or we would have $x-e\sim0\sim x+e$, which would imply $e\sim-e$, contradicting \ref{itm:R4}. We get
		\begin{align*}
			eR_x & = -e\cdot h_{-e+x}+e + e\cdot h_x = -e\cdot \mu_e\mu_{-e+x}+e + e\cdot \mu_e\mu_x \\
				&= -(-e)\mu_{-e+x}+e + (-e)\mu_x = -(-x+e+(-e)\mu_x-x) + e + (-e)\mu_x = 2x\,,
		\end{align*}
		so $ex = eR_x/2 = x$.
		\item In this case, $x$ is not a unit, so $x+e$ and $x+2e$ are units (since $2e\nsim0$ and $e$ is a unit). We get
		\begin{align*}
			eR_x & = e\cdot h_{2e+x}-e\cdot h_{e+x}-e\cdot h_{-2e}+e \\
				&= e\cdot \mu_e\mu_{2e+x}-e\cdot \mu_e\mu_{e+x}-e\cdot \mu_e\mu_{-2e}+e \\
				&= -e\mu_{2e+x}+e\mu_{e+x}-(-e)\mu_{-2e}+e \\
				&= -(-e-x-e+e\mu_{e+x}-e-x)+e\mu_{e+x}-(e+e+(-e)\mu_e+e)+e = 2x\,,
		\end{align*}
		so $ex = eR_x/2 = x$.
	\end{enumerate}
    Substituting $e$ for $x$ in~\eqref{eq:xyz}, we get $yz=zy$ for all $y,z \in R$, so the multiplication is commutative.
    In particular, $xe=ex=x$ for all $x\in R$, so $e$ is an identity element.
    Finally, we apply commutativity to~\eqref{eq:xyz}, and we get $(yx)z=(xy)z = (xz)y = y(xz)$ for all $x,y,z \in R$, so the multiplication is associative.
\end{proof}
\begin{theorem}
	The structure $(R,+,\cdot)$ is a unital, commutative ring.
\end{theorem}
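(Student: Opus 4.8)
The plan is as follows. Construction~\ref{constr:ring} has already shown that $(R,+)$ is an abelian group, and Lemma~\ref{le:monoid} establishes that $(R,\cdot)$ is a commutative monoid with identity~$e$. Hence associativity and commutativity of $\cdot$, the abelian group structure on $+$, and the existence of a unit are all in hand, so the only ring axiom still to be verified is distributivity. Because the multiplication is commutative, it suffices to prove one of the two distributive laws, say
\[ (y+z)\cdot x = y\cdot x + z\cdot x \quad\text{for all } x,y,z\in R, \]
and then transport it across commutativity.

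The key observation is that the multiplication was defined by $a\cdot b = aR_b/2$, where each $R_b$ is a \emph{group endomorphism} of $(R,+)$; this was recorded just before Lemma~\ref{le:monoid}, since $R_b$ is by definition an integer combination of Hua maps and the identity, and Hua maps act additively on $(R,+)$ by Lemma~\ref{prop:huaAut}. Moreover, halving is itself additive: as $(R,+)$ is abelian and uniquely $2$-divisible by Lemma~\ref{prop:2divglobal}, uniqueness forces $(a+b)/2 = a/2 + b/2$. I would then simply compute, for fixed $x$,
\[ (y+z)\cdot x = (y+z)R_x/2 = (yR_x + zR_x)/2 = yR_x/2 + zR_x/2 = y\cdot x + z\cdot x, \]
using the endomorphism property of $R_x$ in the second step and additivity of halving in the third. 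Applying commutativity of $\cdot$ then upgrades this to $x\cdot(y+z)=x\cdot y + x\cdot z$, so both distributive laws hold and $(R,+,\cdot)$ is a unital commutative ring.

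Conceptually, the map $y\mapsto y\cdot x$ is nothing but the composite of the endomorphism $R_x$ with the halving homomorphism, hence an endomorphism of $(R,+)$ in its own right; this is exactly what distributivity asserts. The substantive difficulties have therefore already been discharged earlier: showing that each $R_x$ is well defined (i.e.\ that all Hua maps appearing in its three-case definition exist, which relies on \ref{itm:R1}--\ref{itm:R4}) and is additive, and establishing the commutative monoid structure in Lemma~\ref{le:monoid}. Given these, the present theorem is a short formal consequence, and I anticipate no real obstacle beyond carefully citing the additivity of $R_x$ and of halving in the displayed chain of equalities.
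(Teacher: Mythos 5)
Your proof is correct and follows essentially the same route as the paper: both reduce the theorem to distributivity and obtain it from the fact that each $R_x$ is a group endomorphism of $(R,+)$ (being a linear combination of Hua maps, which act additively by Lemma~\ref{prop:huaAut}), together with additivity of halving from unique $2$-divisibility, then transport to the other distributive law via commutativity from Lemma~\ref{le:monoid}. Your explicit justification that $(a+b)/2 = a/2 + b/2$ is a welcome detail the paper leaves implicit.
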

\begin{proof}
	We know that $(R,+)$ is an abelian group with identity element $0$ (isomorphic to $U$), and that the multiplicative structure is a commutative monoid with identity element $e$. It only remains to show the distributivity. We have seen before that the maps $R_x$, as linear combinations of Hua maps, act linearly on $(R,+)$. This means that $(x+y)R_z = xR_z+yR_z$, and so
    \[ (x+y)z = (xR_z+yR_z)/2 = xR_z/2+yR_z/2 = xz+yz \]
    for all $x,y,z \in R$.
    By commutativity, we also get $x(y+z)=(y+z)x=yx+zx = xy+xz$ for all $x,y,z \in R$.
    We conclude that $R$ is indeed a unital, commutative ring.
\end{proof}
Our next goal is to show that $R$ is a local ring. To do this, we will identify the invertible elements, and show that the non-invertible elements form an ideal.
\begin{proposition}\label{prop:inverse}
	If $x\in X$ is a unit, then $x\in R$ is invertible with inverse $x^{-1}:=(-x)\mu_e$.
\end{proposition}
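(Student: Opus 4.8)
The plan is to reduce, using commutativity of the multiplication (Lemma~\ref{le:monoid}), to proving that $yx=e$ for $y:=(-x)\mu_e$. First I would note that $y$ is again a unit, hence lies in $R$: indeed $-x$ is a unit by the corollary to Proposition~\ref{pr:unit}, and $\mu_e$ interchanges $\overline 0$ and $\overline\infty$, so $\overline{y}\notin\{\overline0,\overline\infty\}$. Two consequences of specialness will be used throughout. From Lemma~\ref{prob:specialmu}\ref{itm:special-i} and~\ref{itm:special-iii} we have $(-x)\mu_x=-(x\mu_x)=-(-x)=x$, and from Lemma~\ref{prob:specialmu}\ref{itm:special-i} we have $y=(-x)\mu_e=-(x\mu_e)$. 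The decisive simplification is that, since $\mu_e$ is an involution (Lemma~\ref{lem:mu-involution}) and $h_z=\mu_e\mu_z$, we get $y\mu_e=-x$, and therefore $y\cdot h_z=(-x)\mu_z$ for every unit $z$. As $yx=yR_x/2$, it suffices to show $yR_x=e\cdot2$, from which unique $2$-divisibility (Lemma~\ref{prop:2divglobal}) gives $yx=e$.

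I would then split according to the definition of $R_x$. If $x+e$ is a unit (case~(I)), then
\[ yR_x=y\cdot h_{e+x}-y\cdot h_x-y=(-x)\mu_{e+x}-(-x)\mu_x-y=(-x)\mu_{e+x}-x-y . \]
Applying the translated form of Lemma~\ref{lem:specialsum} used in the proof of Lemma~\ref{le:monoid} (valid since $x$, $e$ and $x+e$ are units) gives $x\mu_{x+e}=-e-x+x\mu_e-e$, hence by specialness $(-x)\mu_{e+x}=-(x\mu_{x+e})=(e\cdot2)+x-x\mu_e$. Substituting and cancelling $x$, and then using $y=-(x\mu_e)$, the $x\mu_e$-terms cancel and we are left with $yR_x=e\cdot2$.

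If $x+e$ is not a unit (case~(II)), then $x-e$ is a unit, since otherwise $x-e\sim0\sim x+e$ would force $e\sim-e$, contradicting~\ref{itm:R4} (exactly as in Lemma~\ref{le:monoid}). Here
\[ yR_x=-y\cdot h_{-e+x}+y+y\cdot h_x=-(-x)\mu_{x-e}+y+x , \]
and specialness turns the first term into $x\mu_{x-e}$. Applying the translated Lemma~\ref{lem:specialsum} with the units $x$ and $-e$, together with $\mu_{-e}=\mu_e$ (Lemma~\ref{lem:mu-involution}), gives $x\mu_{x-e}=(e\cdot2)-x+x\mu_e$; substituting and using $y=-(x\mu_e)$ again collapses everything to $yR_x=e\cdot2$. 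In both cases $yx=(e\cdot2)/2=e$, and commutativity yields $xy=e$ as well. The main obstacle is purely bookkeeping: one must keep careful track of signs in the abelian group $(R,+)$ and verify at each step that the elements fed into Lemma~\ref{lem:specialsum} (namely $x\pm e$) are genuinely units so that the identity applies; the conceptual input is entirely contained in the involution identity $y\mu_e=-x$ and the specialness relations above.
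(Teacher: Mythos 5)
Your proof is correct and takes essentially the same route as the paper's: reduce to $x^{-1}x=e$ by commutativity, use that $\mu_e$ is an involution so that $(-x)\mu_e\cdot h_z=(-x)\mu_z$, and then compute $(-x)\mu_e R_x = e\cdot 2$ in cases (I) and (II) via specialness and the translated form of Lemma~\ref{lem:specialsum}. The only difference is cosmetic: you spell out some points the paper leaves implicit, such as $(-x)\mu_e$ being a unit and the verification that $x-e$ is a unit in case~(II).
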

\begin{proof}
    Note that we only need to show $x^{-1}x=e$, since by commutativity we will also get $xx^{-1}=e$.
    We will again use Lemmas~\ref{prob:specialmu} and \ref{lem:specialsum}.
	Again, we need to proceed case by case, but since $x$ is a unit, only the two first cases occur.
	\begin{enumerate}[label*=\textnormal{(\Roman*)},itemsep=.5ex]
		\item In this case, both $x$ and $x+e$ are units. We get
		\begin{align*}
			(-x)\mu_eR_x &= (-x)\mu_e\cdot h_{e+x} - (-x)\mu_e\cdot h_x - (-x)\mu_e = (-x)\mu_{e+x} - (-x)\mu_x - (-x)\mu_e \\
				&= -x\mu_{x+e} - x +x\mu_e = -(-e-x+x\mu_e-e)-x+x\mu_e = 2e\,,
		\end{align*}
		so $x^{-1}x = x^{-1}R_x/2 = e$.
		\item In this case, $x$ a unit and $x+e$ not a unit (so $x-e$ a unit), hence
		\begin{align*}
			(-x)\mu_eR_x &= -(-x)\mu_e\cdot h_{-e+x}+(-x)\mu_e + (-x)\mu_e\cdot h_x \\
				&= -(-x)\mu_{-e+x}+(-x)\mu_e + (-x)\mu_x = x\mu_{x-e}+(-x)\mu_e + x \\
				&= (e-x+x\mu_e+e)-x\mu_e+x = 2e\,,
		\end{align*}
		so again $x^{-1}x = x^{-1}R_x/2 = e$.
        \qedhere
	\end{enumerate}
\end{proof}
Observe that we have shown that the elements we called `units' in the local Moufang set correspond to the units in the ring $R$.
\begin{proposition}
	The set $\m:=\overline{0}\subset R$ is an ideal in $R$.
\end{proposition}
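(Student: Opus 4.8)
The plan is to verify the two defining conditions of an ideal: that $\m=\overline 0$ is a subgroup of $(R,+)$, and that it absorbs multiplication by arbitrary ring elements. Throughout I use the identification, already implicit in the construction, that for $x\in R=X\setminus\overline\infty$ one has $x\sim 0$ if and only if $\alpha_x\in U_\infty^\circ$: indeed, such an $x$ is a non-unit precisely when $x\sim 0$, and by Proposition~\ref{pr:unit} this is equivalent to $\overline{\alpha_x}$ fixing $\overline 0$, i.e.\ to $\overline{\alpha_x}$ being trivial, i.e.\ to $\alpha_x\in U_\infty^\circ$.

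For the additive part, recall that the addition on $R$ is the transport of the group law of $U_\infty$ along the bijection $x\mapsto\alpha_x$, so that $\alpha_{x+y}=\alpha_x\alpha_y$ and $\alpha_{-x}=\alpha_x^{-1}$. Since $U_\infty^\circ=\Ker(U_\infty\to U_{\overline\infty})$ is a subgroup of $U_\infty$, the corresponding subset $\m=\overline 0$ of $R$ is closed under addition and negation and contains $0$; hence $\m$ is an additive subgroup of $(R,+)$.

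For absorption it suffices, by commutativity, to prove that $mr\in\m$ whenever $m\in\m$ and $r\in R$, and by definition $mr=mR_r/2$. The key point is that every Hua map fixes $0$ and lies in $G\le\Sym(X,\sim)$, so it sends the class $\overline 0=0\cdot h$ to itself; thus each Hua map maps $\m$ into $\m$. In each of the three cases of its definition, $R_r$ is a signed sum, formed using the addition of $R$, of such Hua maps together with the identity map; as $\m$ is an additive subgroup and each summand $m\,h$ lies in $\m$, we obtain $mR_r\in\m$. Finally I pass to $mR_r/2$ using the implication $z\sim 0\Rightarrow z/2\sim 0$: here $z/2$ is the unique element with $(z/2)\cdot 2=z$, and were $z/2$ a unit then \ref{itm:R4} would force $z=(z/2)\cdot 2$ to be a unit as well, contrary to $z\sim 0$. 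Hence $mr=mR_r/2\in\m$, which proves absorption and hence that $\m$ is an ideal.

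The only delicate points are the clean identification $\m\leftrightarrow U_\infty^\circ$, which makes the additive closure immediate, and the halving step, where condition \ref{itm:R4} re-enters to guarantee that $\m$ is stable under division by $2$; everything else is formal once one observes that Hua maps preserve the equivalence relation and fix $0$, so that the case distinction in the definition of $R_r$ is irrelevant for this argument. I expect the halving step to be the main obstacle, since it is the one place where the proof is not purely group-theoretic but relies on the extra hypothesis. Finally, since $e$ is a unit we have $e\notin\m$, so the ideal is in fact proper, as will be needed for the subsequent identification of $R$ as a local ring.
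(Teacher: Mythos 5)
Your proof is correct and follows essentially the same route as the paper's: the absorption property comes from the fact that each Hua map fixes $0$ and preserves the equivalence (so each summand in $xR_r$ lies in $\overline{0}$), and \ref{itm:R4} is invoked in contrapositive form to pass from $xR_r\sim 0$ to $xR_r/2\sim 0$. The only cosmetic difference is in the additive part, where you identify $\m$ with the subgroup $U_\infty^\circ$ under $x\mapsto\alpha_x$ while the paper verifies closure directly from equivalence-preservation of the maps $\alpha_y$; both observations are immediate and equivalent.
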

\begin{proof}
	Let $x,y \in \m$; then $x+y = x\alpha_y\sim0\alpha_y=y\sim0$, so $x+y\sim0$ and $x+y\in\m$. Also $-x = 0\alpha_{-x}\sim x\alpha_{-x}=0$, so $-x\in\m$. Next, we need to verify that $\m$ is closed under multiplication with $R$. Take $x\in\m$ and $r\in R$. We get $xr = xR_r/2$, but $R_r$ is a linear combination of Hua maps. Hence $xR_r$ is a sum of $x\cdot h_{r_i}$ for some $r_i$, and $x h_{r_i}\sim 0 h_{r_i}=0$. Hence $xR_r\sim 0$, and~\ref{itm:R4} then implies that also $xr\sim 0$. Hence $\m$ is an ideal.
\end{proof}
We can now summarize our results of this section.
\begin{theorem}\label{thm:Rlocal}
	Suppose that $\M$ is a local Moufang set satisfying \hyperref[itm:R1]{\textnormal{(R1)--(R4)}}.
    Then we can use Construction~\ref{constr:ring} to get a local ring $R$ from $\M$, and $2$ is invertible in $R$.
\end{theorem}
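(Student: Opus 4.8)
The plan is to assemble the local‑ring conclusion from the structural facts already in hand, and then to read off the invertibility of $2$ from hypothesis~\ref{itm:R4}. Recall that the preceding theorem shows $(R,+,\cdot)$ is a commutative unital ring, that every unit of the local Moufang set is invertible in $R$ (Proposition~\ref{prop:inverse}), and that $\m=\overline{0}$ is an ideal of $R$. The key bookkeeping point is to keep the two meanings of ``unit'' apart: a \emph{unit of $\M$} is an element of $X$ non‑equivalent to both $0$ and $\infty$, while an \emph{invertible element of $R$} is a unit in the ring‑theoretic sense; the heart of the argument is that these two notions coincide.

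First I would pin down the invertible elements of $R$ exactly. Since $R=X\setminus\overline{\infty}$, an element $x\in R$ is a unit of $\M$ precisely when $x\nsim 0$, i.e.\ when $x\notin\m$; by Proposition~\ref{prop:inverse} every such $x$ is invertible in $R$, so $R\setminus\m$ consists entirely of invertible elements. For the reverse inclusion I would first note that $\m$ is a \emph{proper} ideal: because $\abs{\overline{X}}>2$ there exists a unit $e$, and $e\notin\m$, so $\m\neq R$. A proper ideal of a commutative ring contains no invertible element (else it would contain $e$ and hence equal $R$), so no element of $\m$ is invertible. Combining the two directions, the invertible elements of $R$ are exactly $R\setminus\m$, and the non‑invertible elements are exactly $\m$.

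With this in place, locality is immediate: the set of non‑units of $R$ equals the ideal $\m$, and a commutative ring whose non‑units form an ideal is local, with that ideal as its unique maximal ideal. (Concretely, any proper ideal consists of non‑units and is therefore contained in $\m$, while $\m$ is proper since $e\notin\m$.) For the final assertion, observe that the ring element $2$ is $e+e=0\cdot\alpha_e\alpha_e=e\alpha_e=e\cdot 2$ in the notation fixed before Lemma~\ref{lem:2div}. By hypothesis~\ref{itm:R4}, since $e$ is a unit of $\M$, so is $e\cdot 2$; hence $2=e\cdot 2$ lies in $R\setminus\m$ and is therefore invertible by the identification above.

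The computations involved are all light, so I do not expect a genuine obstacle; the one place demanding care is precisely the identification of the ring‑invertible elements with the units of $\M$, where the properness of $\m$ (guaranteed by $\abs{\overline{X}}>2$) is the indispensable input. Once that identification is clean, both the locality of $R$ and the invertibility of $2$ follow formally.
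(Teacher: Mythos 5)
Your proof is correct and follows essentially the same route as the paper: invoke the preceding results (ring structure, Proposition~\ref{prop:inverse}, and that $\m$ is an ideal), identify the invertible elements of $R$ with the units of $\M$ so that the non-invertible elements are exactly $\m$, conclude locality, and apply \ref{itm:R4} to $2 = e\cdot 2$. Your write-up merely makes explicit two details the paper leaves implicit, namely that $\m$ is proper and hence contains no invertible element.
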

\begin{proof}
	We have shown that $\m$ is an ideal in $R$.
    On the other hand, if $r\in R\backslash\m$, then $r\in X\backslash\overline{\infty}$ is a unit, so $\m$ is exactly the set of non-invertible elements of $R$;
    it must therefore be the unique maximal ideal of $R$.
    Since $2=2e$ is a unit, it is invertible in $R$.
\end{proof}

\subsection{Characterization of \texorpdfstring{$\PSL_2(R)$}{PSL\textunderscore2(R)} as a special local Moufang set}


Our goal is to use Theorem~\ref{thm:Rlocal} to characterize $\M(R)$ as a special local Moufang set satisfying certain conditions.
As a first step, we will apply Construction~\ref{constr:ring} on $\M(R)$;
we will see that the resulting local ring is indeed isomorphic to the ring $R$ we started with.

\begin{example}
    Let $R$ be a local ring with $2 \in R^\times$, and consider the special local Moufang set $\M(R)$.
    Then $U_\infty$ is abelian and uniquely $2$-divisible, as
	\[\begin{bmatrix} 1 & r \\ 0 & 1 \end{bmatrix}\begin{bmatrix} 1 & s \\ 0 & 1 \end{bmatrix} = \begin{bmatrix} 1 & r+s \\ 0 & 1 \end{bmatrix} = \begin{bmatrix} 1 & s \\ 0 & 1 \end{bmatrix}\begin{bmatrix} 1 & r \\ 0 & 1 \end{bmatrix}\qquad\text{ and }\qquad\begin{bmatrix} 1 & r/2 \\ 0 & 1 \end{bmatrix}^2 = \begin{bmatrix} 1 & r \\ 0 & 1 \end{bmatrix}\]
	for all $r,s \in R$. Secondly, the Hua subgroup is abelian, as
	\[\begin{bmatrix} r^{-1} & 0 \\ 0 & r \end{bmatrix}\begin{bmatrix} s^{-1} & 0 \\ 0 & s \end{bmatrix} = \begin{bmatrix} (rs)^{-1} & 0 \\ 0 & rs \end{bmatrix} = \begin{bmatrix} s^{-1} & 0 \\ 0 & s \end{bmatrix}\begin{bmatrix} r^{-1} & 0 \\ 0 & r \end{bmatrix}\,.\]
	Finally, recall that $[1,r]$ is a unit if and only if $r\in R^\times$, so since $2$ is invertible, $2r\in R^\times$ and $[1,2r]$ is a unit.

	Hence we can perform Construction~\ref{constr:ring} on $\M(R)$ whenever $2$ is invertible in $R$.
    We thus get a new ring $R' := \{[1,r]\mid r\in R\}$, and choose a unit $[1,e]$. The addition is given by $[1,r]+[1,s] = [1,r+s]$ for all $r,s \in R$,
    and a short computation shows that the multiplication is given by
	\[[1,r][1,s] = [1,re^{-1}s]\]
	for all $r,s \in R$ (in each of the three cases).
\end{example}

\begin{theorem}
	If $R$ is a local ring with residue field not of characteristic $2$, then the ring $R'$ we get from $\M(R)$ using Construction~\ref{constr:ring} with unit $[1,e]$ is isomorphic to $R$.
\end{theorem}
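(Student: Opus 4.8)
The plan is to exploit the explicit description of $R'$ obtained in the preceding example rather than to redo any part of Construction~\ref{constr:ring} from scratch. Since the residue field of $R$ has characteristic $\neq 2$, the element $2$ lies outside $\m$ and is therefore invertible in $R$; this is exactly the hypothesis under which the previous example showed that Construction~\ref{constr:ring} can be carried out on $\M(R)$, so $R'$ is well-defined. From that example we may take as given that $R'$ has underlying set $\{[1,r]\mid r\in R\}$ (with $[1,r]=[1,s]$ if and only if $r=s$), with addition $[1,r]+[1,s]=[1,r+s]$, multiplication $[1,r][1,s]=[1,re^{-1}s]$, and multiplicative identity $[1,e]$.

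I would then define the candidate isomorphism $\psi\colon R'\to R$ by $\psi([1,r]):=re^{-1}$; equivalently, one may exhibit its inverse $\phi\colon R\to R'$, $\phi(r):=[1,re]$. Because $e\in R^\times$, multiplication by $e^{-1}$ is a bijection of $R$, so $\psi$ and $\phi$ are mutually inverse bijections, and it remains only to check that $\psi$ respects the ring operations. Additivity is immediate, since $\psi([1,r]+[1,s])=\psi([1,r+s])=(r+s)e^{-1}=re^{-1}+se^{-1}=\psi([1,r])+\psi([1,s])$. For the multiplication, using commutativity of $R$ we get $\psi([1,r][1,s])=\psi([1,re^{-1}s])=re^{-1}se^{-1}=(re^{-1})(se^{-1})=\psi([1,r])\psi([1,s])$, and the identity is preserved since $\psi([1,e])=ee^{-1}=1$. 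Hence $\psi$ is an isomorphism of unital commutative rings.

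The main point, such as it is, is simply to recognize that the factor $e^{-1}$ appearing in the multiplication formula $[1,r][1,s]=[1,re^{-1}s]$ for $R'$ is exactly cancelled by the rescaling $r\mapsto re^{-1}$, so that the product becomes the ordinary product in $R$. Once that formula from the previous example is in hand there is no genuine obstacle: commutativity of $R$ makes the multiplicativity computation symmetric in $r$ and $s$, and the role of the characteristic hypothesis is confined to ensuring that $2$ is invertible so that $R'$ exists at all.
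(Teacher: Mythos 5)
Your proof is correct and is essentially the same as the paper's: the paper defines the map in the opposite direction, $\phi\colon R\to R'\colon r\mapsto[1,re]$, and verifies additivity, multiplicativity and preservation of the identity by the same cancellation $[1,rse]=[1,re\,e^{-1}se]=[1,re][1,se]$ that you exploit. Writing the isomorphism as $\psi=\phi^{-1}$ instead is an immaterial difference.
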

\begin{proof}
    We define a bijection $\phi \colon R\rightarrow R' \colon r\mapsto[1,re]$. Then
	\begin{align*}
        \phi(1) &= [1,e] \,, \\
		\phi(r+s) &= [1,(r+s)e] = [1,re]+[1,se] = \phi(r)+\phi(s)\,, \\
        \phi(rs) &= [1,rse] = [1,ree^{-1}se] = [1,re][1,se] = \phi(r)\phi(s)\,,
	\end{align*}
    for all $r,s \in R$.
	We conclude that $\phi$ is a ring isomorphism.
\end{proof}
\begin{remark}
    The ring $R'$ is, in fact, an {\em isotope} of $R$ with new unit $e$,
    and we have simply illustrated the (well known) fact that an isotope of an associative ring is always isomorphic to the original ring.
\end{remark}
\begin{corollary}
	If $\M(R)$ is isomorphic to $\M(R')$ for local rings $R$ and $R'$ with residue field not of characteristic $2$, then $R\cong R'$.
\end{corollary}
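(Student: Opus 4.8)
The plan is to chain together the isomorphism $\M(R)\cong\M(R')$ with the ring-reconstruction machinery we have just established. The key observation is that Construction~\ref{constr:ring} is canonical in the following sense: once we fix a unit $e$ to serve as the multiplicative identity, the resulting ring structure $(R,+,\cdot)$ is built entirely out of the little projective group acting on $X\setminus\overline{\infty}$ via the root group $U_\infty$, the $\mu$-maps, and the Hua maps. An isomorphism of local Moufang sets transports all of this data faithfully, so it should induce a ring isomorphism between the two reconstructed rings, provided we are careful about the choice of identity element.

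First I would spell out what an isomorphism $\phi\colon X\to X'$ together with the group isomorphisms $\theta_x\colon U_x\to U'_{\phi(x)}$ gives us. Since $\phi$ preserves equivalence and the root group actions, it maps the chosen $0,\infty$ of $\M(R)$ to a non-equivalent pair in $\M(R')$; after composing with an element of the little projective group of $\M(R')$ (using Proposition~\ref{prop:twotrans}), we may assume $\phi$ sends $0\mapsto 0$ and $\infty\mapsto\infty$. Then $\phi$ restricts to a bijection $R=X\setminus\overline{\infty}\to X'\setminus\overline{\infty}=R'$, and because $\phi$ intertwines the $U_\infty$-action with the $U'_\infty$-action it is automatically additive: $\phi(x+y)=\phi(x)+\phi(y)$, since addition is just the translation of the group law in $U_\infty$. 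The multiplicative structure is where the choice of $e$ enters. The map $\phi$ carries $\mu$-maps to $\mu$-maps and Hua maps (with respect to $\tau=\mu_e$) to Hua maps (with respect to $\tau'=\mu_{\phi(e)}$), so it intertwines the operators $R_y$ with $R'_{\phi(y)}$ and respects unique $2$-divisibility; hence $\phi(xy)=\phi(x)\phi(y)$ where the right-hand product is formed in $R'$ using identity element $\phi(e)$. Thus $R\cong R'_{\phi(e)}$, the ring obtained from $\M(R')$ with multiplicative unit $\phi(e)$ rather than the originally chosen $e'$.

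The remaining step, and the one I expect to be the main obstacle, is to reconcile the choice of unit. The corollary asserts $R\cong R'$ as the abstract local rings we started with, but Construction~\ref{constr:ring} only recovers them up to the choice of multiplicative identity, i.e. up to isotopy. Here I would invoke exactly the point made in the Remark preceding the corollary: the ring produced with a different choice of unit $e$ is an \emph{isotope} of the one produced with the canonical unit, and an isotope of an associative ring is isomorphic to the original ring. Concretely, the preceding theorem already shows that the reconstructed ring $R'_{e'}$ is isomorphic to the genuine ring $R'$ for any choice of unit $e'$, and likewise $R$ is isomorphic to its reconstruction; combining $R\cong (\text{reconstruction of }\M(R))\cong(\text{reconstruction of }\M(R'))\cong R'$ through the transported ring isomorphism completes the argument. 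The delicate bookkeeping is making sure that the residue-field characteristic hypothesis is used correctly (it guarantees $2$ is invertible, so Construction~\ref{constr:ring} applies on both sides) and that the normalization $\phi(0)=0$, $\phi(\infty)=\infty$ can genuinely be arranged without disturbing the root-group isomorphisms; once those are in hand, the isomorphism of rings follows formally.
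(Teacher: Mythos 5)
Your proposal is correct and follows exactly the argument the paper intends (the corollary is left without explicit proof there): transport Construction~\ref{constr:ring} through the isomorphism $\M(R)\cong\M(R')$ to see that the two reconstructed rings are isomorphic, then apply the preceding theorem on each side --- which holds for an arbitrary choice of unit $[1,e]$, so the isotopy issue you flag is already absorbed --- to conclude $R\cong R'$. Your normalization of $\phi$ via Proposition~\ref{prop:twotrans} and the verification that $\mu$-maps and Hua maps are transported are precisely the routine details the paper suppresses.
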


We will now characterize $\M(R)$ purely based on data from the local Moufang set.
We will need two extra assumptions on the local Moufang set, in addition to \hyperref[itm:R1]{(R1)--(R4)}. Observe that $\mu$-maps are in this case involutions by Lemma~\ref{lem:mu-involution}.
\begin{theorem}\label{thm:PSL2equiv}
	Let $\M$ be a local Moufang set satisfying \hyperref[itm:R1]{\textnormal{(R1)--(R4)}}.
    Let $e$ and $R_x$ be as in Construction~\ref{constr:ring}.
    Assume furthermore that
	\[x\mu_e\alpha_y = yR_x\alpha_{-2e}\mu_eR_x\mu_e \quad \text{for all $x\sim0$ and $y\nsim\infty$.} \]
	Then $\M$ is isomorphic to $\M(R)$, where $R$ is the local ring obtained from Construction~\ref{constr:ring}.
\end{theorem}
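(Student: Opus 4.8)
The plan is to construct the isomorphism explicitly and then invoke Remark~\ref{remark:constructioniso}. As observed after Theorem~\ref{thm:constrMouf}, any local Moufang set may be presented as $\M(U_\infty,\tau)$ for a $\mu$-map $\tau$; here we take $\tau=\mu_e$, which is an involution by Lemma~\ref{lem:mu-involution}. By Example~\ref{ex:PSL2R-2} we also have $\M(R)=\M(U',\tau')$ with
\[U'=\left\{\begin{psmallmatrix}1&r\\0&1\end{psmallmatrix}\;\middle|\;r\in R\right\},\qquad \tau'=\begin{psmallmatrix}0&1\\-1&0\end{psmallmatrix}.\]
So it suffices to produce a bijection $\phi\colon X\to\P^1(R)$ and a group isomorphism $\theta\colon U_\infty\to U'$ satisfying the three conditions of Remark~\ref{remark:constructioniso} (with $\tau=\mu_e$). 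Using the identification $R=X\setminus\overline\infty$, I would set $\theta(\alpha_r):=\begin{psmallmatrix}1&r\\0&1\end{psmallmatrix}$ and
\[\phi(x):=[1,x]\ \ (x\nsim\infty),\qquad \phi(x):=[-(x\mu_e),1]\ \ (x\sim\infty);\]
note that $x\mu_e=x\tau\in\m$ when $x\sim\infty$, so the second clause lands in $\{[m,1]\mid m\in\m\}$ and $\phi$ is a bijection.

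Most of the verification is routine. The map $\theta$ is an isomorphism because $\alpha_r\alpha_s=\alpha_{r+s}$ transcribes to matrix multiplication in $U'$. Equivalence is preserved since, for $x,y\nsim\infty$, we have $x-y=x\alpha_{-y}$, whence $x\lsim y\iff x-y\in\m$, matching the relation on $\P^1(R)$ from~\eqref{eq:equiv}; the $\infty$-branch is handled through $\mu_e$. The condition $\phi(x\mu_e)=\phi(x)\tau'$ splits into the cases ``$x$ a unit'', ``$x\sim0$'', ``$x\sim\infty$''; in each it collapses either to the identity $x\mu_e=-x^{-1}$ for units $x$ (which follows from specialness together with $x^{-1}=(-x)\mu_e$ of Proposition~\ref{prop:inverse}) or to $\mu_e$ being an involution for the non-units. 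Likewise the compatibility $\phi(x\alpha_y)=\phi(x)\theta(\alpha_y)$ is immediate when $x\nsim\infty$: there $x\alpha_y=x+y$, and $\begin{psmallmatrix}1&y\\0&1\end{psmallmatrix}$ sends $[1,x]$ to $[1,x+y]$.

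The only substantial point—and the place where the extra hypothesis is indispensable—is the compatibility $\phi(x\alpha_y)=\phi(x)\theta(\alpha_y)$ for $x\sim\infty$. Writing $n:=x\mu_e\in\m$, we have $\phi(x)=[-n,1]$, and $\begin{psmallmatrix}1&y\\0&1\end{psmallmatrix}$ sends $[-n,1]$ to $[-n,\,1-ny]=[-n(1-ny)^{-1},1]$ (with $1-ny\in R^\times$ since $n\in\m$). As $\phi(x\alpha_y)=[-((x\alpha_y)\mu_e),1]$, I must prove $(x\alpha_y)\mu_e=n(1-ny)^{-1}$ in $R$. Applying the hypothesis with $n=x\mu_e\sim0$ in the role of its first variable (so that $n\mu_e=x$) gives $x\alpha_y=(n\mu_e)\alpha_y=yR_n\,\alpha_{-2e}\,\mu_e\,R_n\,\mu_e$, hence $(x\alpha_y)\mu_e=yR_n\,\alpha_{-2e}\,\mu_e\,R_n$ because $\mu_e^2=\id$. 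It then remains to evaluate this in ring terms, using $zR_n=2zn$ (from $zn=zR_n/2$), the fact that $\alpha_{-2e}$ subtracts $2e$, and $z\mu_e=-z^{-1}$ on units: one finds
\[yR_n\,\alpha_{-2e}\,\mu_e\,R_n=\bigl(-(2yn-2e)^{-1}\bigr)R_n=-2n(2yn-2e)^{-1}=n(1-ny)^{-1},\]
where invertibility of $2$ and commutativity of $R$ are used in the last equality. This is exactly the required identity. With all three conditions verified, Remark~\ref{remark:constructioniso} yields $\M\cong\M(R)$. I expect this $\infty$-branch computation to be the crux: it is precisely the transcription of the Möbius action of $U'$ on the $[m,1]$-part of $\P^1(R)$, and recognizing the hypothesis as exactly this transcription is the key step.
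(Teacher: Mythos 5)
Your proposal is correct and follows essentially the same route as the paper's own proof: the same bijection $\phi$ (the paper writes $[e,x]$ and $[-x\mu_e,e]$ where you write $[1,x]$ and $[-(x\mu_e),1]$, identical since $e$ is the identity of $R$), the same isomorphism $\theta$, the same appeal to Remark~\ref{remark:constructioniso}, and the same use of the extra hypothesis applied to $n=x\mu_e$ in the $\infty$-branch, where your evaluation $yR_n\,\alpha_{-2e}\,\mu_e\,R_n = n(1-ny)^{-1}$ matches the paper's computation term for term. The differences are purely notational.
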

\begin{proof}
	We adopt the notations from Construction~\ref{constr:ring} for $\M$,
    and we will denote the root group $U_{[0,1]}$ of $\M(R)$ by $U'$.
    We will construct a bijection from $X$ to $\P^1(R)$ preserving the equivalence, a bijection from $U_\infty$ to $U'$,
    and an involution $\tau$ of $\M(R)$ 
    such that the action of $U_\infty$ and $\mu_e$ on~$X$ is permutationally equivalent with the action of $U'$ and $\tau$ on~$\P^1(R)$. By Remark~\ref{remark:constructioniso} this will show that $\M$ is indeed isomorphic to $\M(R)$.

    By construction, $R = X\setminus\overline{\infty}$, and we have $\overline{\infty} = \overline{0}\mu_e$ by the definition of $\mu$-maps. So we define
	\[ \phi \colon X\to \P^1(R)  \colon  x\mapsto
        \begin{cases}
            [e,x] &\text{if $x\in R$,} \\
            [-x\mu_e, e] & \text{if $x\in \overline{\infty}$.}
        \end{cases} \]
	Note that in the second case $x\mu_e\nsim\infty$, so this is indeed an element of the ring $R$.
    It is clear that $\phi$ is a bijection;
    we claim that $\phi$ preserves the equivalence. First, if $x,y\in R$, then
	\[x\sim y\iff x\alpha_{-y}\sim 0\iff x-y\sim 0\iff x-y\in\m\iff [e,x]\sim [e,y]\,,\]
	where the last equivalence follows from~\eqref{eq:equiv} on p.\@~\pageref{eq:equiv}.
    Secondly, if $x\sim y\sim\infty$, then $x\mu_e\sim y\mu_e\sim 0$, so both are in $\m$, and hence $\phi(x)\sim\phi(y)$.
    Finally, if $x\sim\infty$ but $y\in R$, then again $x\mu_e\sim 0$, so $x\mu_e\in \m$, hence $\phi(x)\nsim\phi(y)$.

    Let $\tau = \begin{bsmallmatrix} 0 & e \\ -e & 0 \end{bsmallmatrix}$.
	It remains to show that the actions of $U_\infty$ and $\mu_e$ on $X$ are permutationally equivalent with the actions of $U'$ and $\tau$ on $\P^1(R)$ via $\phi$.
    For $\tau$ and $\mu_e$, we compute, using $\mu_e^2=\id$,
	\begin{align*}
	\phi(x\mu_e) &= \begin{cases}
	               	[-x,e] & \text{if $x\mu_e\sim\infty\iff x\sim0$}\,, \\
	               	[e,x\mu_e] = [e, -x^{-1}]\hspace{17.15ex} & \text{if $x\in R^\times$}\,, \\
	               	[e,x\mu_e] & \text{if $x\mu_e\sim 0\iff x\sim\infty$}\,;
	               \end{cases} \\
	\phi(x)\tau &= \begin{cases}
	               	[e,x]\tau = [-xe, e^2] = [-x,e] & \text{if $x\sim0$}\,, \\
	               	[-x,e] = [e, -x^{-1}] & \text{if $x\in R^\times$}\,, \\
	               	[-x\mu_e,e]\tau = [-e^2, -x\mu_ee] = [e, x\mu_e] & \text{if $x\sim\infty$}\,;
	               \end{cases}
	\end{align*}
	so $\phi(x\mu_e) = \phi(x)\tau$. To show that the actions of $U_\infty$ and $U'$ are the same, we first observe that the map
	\[ \theta \colon U_\infty\to U' \colon  \alpha_x\mapsto \alpha_{[e,x]} \]
    for all $x \in R$, is a group isomorphism because
	\[ \theta(\alpha_x)\theta(\alpha_y) = \alpha_{[e,x]}\alpha_{[e,y]} = \begin{bmatrix} e & x \\ 0 & e \end{bmatrix} \begin{bmatrix} e & y \\ 0 & e \end{bmatrix}
        = \begin{bmatrix} e & x+y \\ 0 & e \end{bmatrix} = \alpha_{[e,x+y]} = \theta(\alpha_{x+y}) = \theta(\alpha_x\alpha_y) \]
    for all $x,y \in R$.
    It only remains to show that
    \[ \phi(x\alpha_y) = \phi(x)\theta(\alpha_y) \quad \text{for all $x \in X$ and $y \in R$.} \]
    We distinguish two cases: if $x\in R$, then
	\[\phi(x\alpha_y) = \phi(x+y) = [e,x+y] = [e,x]\alpha_{[e,y]} = \phi(x)\theta(\alpha_y)\,.\]
	If $x\sim\infty$, we set $x' = x\mu_e^{-1}$, which is then equivalent to $0$, so
	\begin{align*}
        x\alpha_y
            &= x'\mu_e\alpha_y = yR_{x'}\alpha_{-2e}\mu_eR_{x'}\mu_e = (2yx'-2e)\mu_eR_{x'}\mu_e \\
            &= -(2yx'-2e)^{-1}R_{x'}\mu_e = (-2^{-1}(yx'-e)^{-1})R_{x'}\mu_e \\
            &= (2(-2^{-1}(yx'-e)^{-1})x')\mu_e = (-(yx'-e)^{-1}x')\mu_e\,,
	\end{align*}
    where we have used the fact that $(2yx'-2e)$ is invertible because $2yx' \in \m$.
    Since $x\alpha_y \sim \infty$, this implies
	\[ \phi(x\alpha_y) = [-(-(yx'-e)^{-1}x')\mu_e\mu_e, e] = [(yx'-e)^{-1}x', e]\,, \]
    where we use $\mu_e^2=\id$. On the other hand,
	\begin{align*}
		\phi(x)\theta(\alpha_y) &= [-x\mu_e, e]\alpha_{[e,y]} = [-x', e]\begin{bmatrix} e & y \\ 0 & e \end{bmatrix} \\
			&= [-x'e, -x'y+e] = [(yx'-e)^{-1}x', e] \,,
	\end{align*}
	and we conclude that $\phi(x\alpha_y) = \phi(x)\theta(\alpha_y)$ also in this case.
\end{proof}

\bibliographystyle{alpha}
\bibliography{LocalMoufang}

\begin{thebibliography}{DST08}

\bibitem[DS09]{MR2658895}
Tom {De Medts} and Yoav Segev.
\newblock {A course on {M}oufang sets}.
\newblock {\em Innov. Incidence Geom.}, 9:79--122, 2009.

\bibitem[DST08]{MR2407819}
Tom {De Medts}, Yoav Segev, and Katrin Tent.
\newblock {Special {M}oufang sets, their root groups and their {$\mu$}-maps}.
\newblock {\em Proc. Lond. Math. Soc. (3)}, 96(3):767--791, 2008.

\bibitem[DW06]{MR2221120}
Tom {De Medts} and Richard~M. Weiss.
\newblock {Moufang sets and {J}ordan division algebras}.
\newblock {\em Math. Ann.}, 335(2):415--433, 2006.

\bibitem[Gr{\"u}10]{MR2588140}
Matthias Gr{\"u}ninger.
\newblock Special {M}oufang sets with abelian {H}ua subgroup.
\newblock {\em J. Algebra}, 323(6):1797--1801, 2010.

\bibitem[Loo14]{MR3250775}
Ottmar Loos.
\newblock {Rank one groups and division pairs}.
\newblock {\em Bull. Belg. Math. Soc. Simon Stevin}, 21(3):489--521, 2014.

\bibitem[Seg09]{MR2505304}
Yoav Segev.
\newblock Proper {M}oufang sets with abelian root groups are special.
\newblock {\em J. Amer. Math. Soc.}, 22(3):889--908, 2009.

\bibitem[Tim01]{MR1852057}
Franz~Georg Timmesfeld.
\newblock {\em Abstract root subgroups and simple groups of {L}ie type},
  volume~95 of {\em Monographs in Mathematics}.
\newblock Birkh\"auser Verlag, Basel, 2001.

\bibitem[Tit92]{MR1200265}
Jacques Tits.
\newblock {Twin buildings and groups of {K}ac-{M}oody type}.
\newblock In {\em {Groups, combinatorics \& geometry ({D}urham, 1990)}}, volume
  165 of {\em {London Math. Soc. Lecture Note Ser.}}, pages 249--286. Cambridge
  Univ. Press, Cambridge, 1992.

\end{thebibliography}

\end{document}